\newtheorem{definition}{Definition}[section]
\newtheorem{corollary}[definition]{Corollary}
\newtheorem{theorem}[definition]{Theorem}
\newtheorem{lemma}[definition]{Lemma}
\newtheorem{remark}[definition]{Remark}
\newtheorem{assumption}{Assumption}[section]
\date{}
\begin{document}
	\begin{CJK}{UTF8}{gbsn}
		\baselineskip 18pt
		\bibliographystyle{plain} 
		
		\title[ASEIM for solving least squares]{On adaptive stochastic extended iterative methods for solving least squares}
		
		\author{Yun Zeng}
		\address{School of Mathematical Sciences, Beihang University, Beijing, 100191, China. }
		\email{zengyun@buaa.edu.cn}
		
		\author{Deren Han}
		\address{LMIB of the Ministry of Education, School of Mathematical Sciences, Beihang University, Beijing, 100191, China. }
		\email{handr@buaa.edu.cn}
		
		\author{Yansheng Su}
		\address{School of Mathematical Sciences, Beihang University, Beijing, 100191, China. }
		\email{suyansheng@buaa.edu.cn}
		
		\author{Jiaxin Xie}
		\address{LMIB of the Ministry of Education, School of Mathematical Sciences, Beihang University, Beijing, 100191, China. }
		\email{xiejx@buaa.edu.cn}

		\begin{abstract}
			In this paper, we propose a novel adaptive stochastic extended iterative method,  which can be viewed as an improved extension of the randomized extended Kaczmarz (REK) method, for finding the unique minimum Euclidean norm least-squares solution of a given linear system. In particular, we introduce three equivalent stochastic reformulations of the  linear least-squares problem: stochastic unconstrained and constrained optimization problems, and the stochastic multiobjective optimization problem. We then alternately employ  the adaptive variants of the stochastic heavy ball momentum (SHBM) method, which utilize iterative information to update the parameters, to solve the stochastic reformulations.
			We prove that our method converges $R$-linearly in expectation, addressing an open problem in the literature related to designing theoretically supported adaptive SHBM methods. Numerical experiments show that our adaptive stochastic extended iterative method has strong advantages over the non-adaptive one.
		\end{abstract}
		
		\maketitle
		
		\let\thefootnote\relax\footnotetext{Key words: Linear least-squares problem, Kaczmarz, heavy ball momentum, adaptive strategy, stochastic reformulation, minimum Euclidean norm least-squares solution}
		
		\let\thefootnote\relax\footnotetext{Mathematics subject classification (2020): 65F10, 65F20, 90C25, 15A06, 68W20}
		

		\section{Introduction}
		\label{section_1}
		Large-scale linear systems arise in many areas of scientific computing  and engineering, including computerized tomography \cite{natterer2001mathematics}, machine learning \cite{Cha08}, signal processing \cite{Byr04}, etc. In this paper, we consider solving the linear system 
		\begin{equation}
			\label{LS}
			Ax=b, \ A\in\mathbb{R}^{m\times n}, \ b\in\mathbb{R}^m
		\end{equation}
		in the least-squares sense, i.e., finding $x^*$ such that
		\begin{equation}
			\label{LS_norm}
			x^*\in \arg\min \{\|Ax-b\|^2_2, x\in\mathbb{R}^n\}.
		\end{equation}
		It is well-known \cite{golub2013matrix,ben2003generalized,Du20Ran} that the problem \eqref{LS_norm} has a unique minimum Euclidean norm least-squares solution  $A^\dagger b$, where $A^\dagger$ denotes the Moore-Penrose pseudoinverse of $A$.  
		This solution exists regardless of the dimensions $m$ and $n$, the rank of $A$, or the consistency of the linear system.
		
		The randomized Kaczmarz (RK) method \cite{Str09} has recently gained popularity in solving large-scale linear systems for its reduced iteration cost and low memory storage requirements. 
		If the linear system \eqref{LS} is consistent, the RK method converges linearly in expectation to $A^\dagger b+(I-A^\dagger A)x^0$ with an arbitrary initial point $x^0$ \cite{Str09,Han2022-xh, DS2021}. In particular, if $x^0\in\operatorname{Range}(A^\top)$, then the RK method can obtain the unique minimum Euclidean norm least-squares solution  $A^\dagger b$. Here, \(\top\) denotes the transpose of a matrix or a vector, and \(\operatorname{Range}(A^\top)\) denotes the range space of \(A\).
		If it is inconsistent, however, Needell \cite{needell2010randomized} showed that the RK method only converges within a radius (convergence horizon) of the least-squares solution; see also \cite{bai2021greedy,ma2015convergence} for some further comments.
		
		To address this issue, Zouzias and Freris \cite{Zou12} applied a modification to the RK method, proposing the \emph{randomized  extended  Kaczmarz} (REK) method. Letting $\text{Range}(A)$ represent the range space of $A$ and $b_{\text{Range}(A)}$ denote the orthogonal projection of $b$ on $\operatorname{Range}(A)$, 
		it is known that \eqref{LS_norm} is equivalent to solving the consistent system $Ax=b_{\text{Range}(A)}$ \cite{hanke1990acceleration}.
		Therefore, the REK method addresses the inconsistency by constructing an auxiliary sequence $z^k$ such that $b-z^k$ approximates \( b_{\operatorname{Range}(A)}\).
		Starting from \(z^0 \in b + \text{Range}(A)\) and \(x^0 \in \text{Range}(A^{\top})\), the REK method updates $z^k$ and $x^k$  alternately, by solving \( A^\top z = 0 \) and \( Ax = b - z^k \) with typical RK method, respectively. 
		The convergence property of the RK method ensures that $b-z^k$ converges to  \( b_{\operatorname{Range}(A)}\) (see Lemma \ref{z-the basic method-convergence}), and consequently $x^k$ converges to $A^\dagger b$. Remark \ref{rek-iteration} provides the concrete iteration scheme. 
		In this paper, we employ the same procedure to propose a more general framework.
		Furthermore, we  also intend to incorporate the heavy ball momentum (HBM) \cite{polyak1964some,ghadimi2015global} into the  REK-type method to enhance its performance.  To the best of our knowledge, this is the first study to investigate the momentum variants of the REK-type method.

		\subsection{Our contributions}
		
		In this paper, we present a simple and versatile stochastic extended iterative framework for computing the unique minimum Euclidean norm least-squares solution to linear systems.  The main contributions of this work are as follows.
		
		\begin{itemize}
		\item[1.]
		We introduce a novel scheme to reformulate any least-squares problem  into several seemingly different but equivalent forms, including the augmented linear system \cite{arioli1989augmented, bai2021greedy}, the unconstrained and constrained stochastic optimization problem, and the multiobjective optimization problem. This reformulation enables us to establish new connections between these problems. Moreover, we identify the conditions that ensure these reformulations share the same solution set.       
		\item[2.]
		It is well-known that one of the limitations of the HBM method is its dependence on  prior knowledge of certain problem parameters, such as singular values of a certain matrix \cite{loizou2020momentum,Han2022-xh,ghadimi2015global,bollapragada2022fast}. 
		If  $Ax=b$ is consistent, there have been works on the strategy to adaptively learn the parameters in the stochastic HBM (SHBM) method, such as the adaptive SHBM (ASHBM) method \cite{zeng2023adaptive} and the adaptive Bregman-Kaczmarz method \cite{lorenz2023minimal,zeng2023fast}. However, these strategies cannot be directly applied to general cases of the least-square problems. In this paper, we address the issues brought by inconsistency and extend such adaptive strategies, proposing the adaptive stochastic extended iterative method. To the best of our knowledge, this is the first time that adaptive HBM is integrated for solving the least-squares problem \eqref{LS_norm} without assuming the consistency of the corresponding linear system $Ax=b$. 
		\item[3.] 
		We demonstrate that our method converges $R$-linearly in expectation to the unique minimum Euclidean norm least-squares solution $A^{\dagger}b$. 
		Furthermore, our results hold for a wide range of  probability spaces, this flexibility enables the development of new, efficient variants tailored to problems with specific structures through the design of suitable probability spaces. Numerical experiments are also provided to confirm our results.
		\end{itemize}
		
		\subsection{Related work}
		
		\subsubsection{Stochastic reformulation of linear systems} 
		
		In the seminal paper \cite{richtarik2020stochastic}, Richt{\'a}rik and Tak{\'a}{\v{s}}  introduced the concept of stochastic reformulation of consistent linear systems.
		This approach, offering multiple equivalent interpretations, can serve as a platform for researchers from various communities to leverage their domain-specific insights.
		Specifically, their reformulation includes as a stochastic optimization problem, a stochastic linear system, a stochastic fixed point problem, and a stochastic intersection problem. They explored methods including stochastic gradient descent, stochastic Newton's method, stochastic proximal point method, stochastic fixed point method, and stochastic projection method, demonstrating that these methods are equivalent when applied to the corresponding reformulations. 
		However, the inclusion of the Moore-Penrose inverse in their reformulations would make the associated algorithm difficult to parallelize.
		
		Recently, Zeng et al. \cite{zeng2023adaptive} proposed a novel scheme to reformulate consistent linear system into a stochastic problem and employed the SHBM to solve the associated problem, where parallel computing is able to be applied. 
		Subsequently, Lorenz and Winkler \cite{lorenz2023minimal} adopted this idea and developed the minimal error momentum Bregman-Kaczmarz method to find a solution to the linear system with certain properties, such as sparsity.
		However, all of stochastic reformulations mentioned above are based on the assumption that the linear system is consistent. This paper goes beyond this limitation and develop a family of stochastic reformulations for arbitrary linear systems (consistent or inconsistent, overdetermined or underdetermined, full-rank or rank-deficient). To the best of our knowledge, this is the first study that addresses the reformulation of any linear system into a stochastic problem.
		
		\subsubsection{The Kacmzarz method}
		\label{section-122}
		
		The Kaczmarz method \cite{Kac37}, also known as algebraic reconstruction technique (ART) \cite{herman1993algebraic,gordon1970algebraic}, is a classic yet effective  row-action iteration solver for the linear system. Starting from  $x^0\in\mathbb{R}^n$, the Kaczmarz method constructs $x^{k+1}$ by
		$$
		x^{k+1}=x^k-\frac{ A_{i_k, :},x^k-b_{i_k}}{\|A_{i_k, :}\|^2_2}A_{i_k, :}^\top,
		$$
		where $ A_{i_k, :} $ denotes the $ i_k $-th row of $ A $, $ b_{i_k} $ denotes the $ i_k $-th entry of $ b $, and $ {i_k} $ is cyclically selected from $ [m]:=\{1,\cdots,m \} $. The iteration sequence $\{x^k\}_{k\geq 0}$ converges to a certain solution but the convergence rate is hard to obtain.  In the seminal paper \cite{Str09}, Strohmer and Vershynin studied the RK method and proved that when the linear system is consistent, RK converges linearly in expectation. Since then, there is a large amount of work on the development of the Kaczmarz-type methods including accelerated RK methods \cite{liu2016accelerated,han2022pseudoinverse,loizou2020momentum,zeng2023adaptive}, randomized Douglas-Rachford methods \cite{Han2022-xh}, block Kaczmarz methods \cite{Nec19,needell2014paved,moorman2021randomized,Gow15}, greedy RK methods \cite{Bai18Gre,Gow19}, etc.
		As mentioned above, Zouzias and Freris \cite{Zou12} proposed the REK algorithm for the cases when the linear system $Ax = b$ is inconsistent.
		Subsequently, there is a large amount of work on the development of the REK-type methods, including its block or deterministic variants \cite{wu2021semiconvergence,needell2013two,DS2021,Du20Ran,wu2022two,wu2022extended,popa1998extensions,popa1999characterization,bai2019partially}, greedy randomized augmented Kaczmarz (GRAK) method \cite{bai2021greedy}, randomized extended Gauss-Seidel (REGS) method \cite{Du19,ma2015convergence}, etc. 
		
		Our particular interest lies in the randomized extended average block Kaczmarz (REABK) method  proposed by Du et al. \cite{Du20Ran}. Let $\left\{\mathcal{I}_1, \mathcal{I}_2, \dots, \mathcal{I}_\ell\right\}$ and $\left\{\mathcal{J}_1, \mathcal{J}_2, \dots, \mathcal{J}_t\right\}$ be partitions of $[m]$ and $[n]$, respectively. We use $A_{:, \mathcal{J}_{j_k}}$ and $A_{\mathcal{I}_{i_k}, :}$ to denote the column and row submatrix of $A$ indexed by $\mathcal{J}_{j_k}$ and $\mathcal{I}_{i_k}$, respectively. Inspired by the randomized average block Kaczmarz (RABK) method proposed by Necoara \cite{Nec19}, Du et al. \cite{Du20Ran}  introduced the following REABK method:
		\begin{equation} \label{REABK}
		\begin{aligned}
			z^{k+1}&=z^k-\frac{\alpha}{\Vert A_{:, \mathcal{J}_{j_k}} \Vert_F^2}A_{:, \mathcal{J}_{j_k}}A_{:, \mathcal{J}_{j_k}}^{\top} z^k, \\
			x^{k+1}&=x^k-\frac{\alpha}{\Vert A_{\mathcal{I}_{i_k}, :} \Vert_F^2}A_{\mathcal{I}_{i_k}, :}^\top(A_{\mathcal{I}_{i_k}, :}x^k-b_{\mathcal{I}_{i_k}}+z^{k+1}_{\mathcal{I}_{i_k}}),
		\end{aligned}
		\end{equation}
		where  $\alpha$ is the step-size. Particularly,	if the partition parameters $\ell=m$, $t=n$, and the step-size $\alpha=1$, then REABK reduces to the REK method \cite{Zou12}.
		The REABK method is able to be parallelized, but its effectiveness can be limited by the choice of step-size $\alpha$, which depends on the singular values of the submatrices involved \cite[Theorem 2.7]{Du20Ran}.
		Indeed, adopting an adaptive strategy for the step-size is often more beneficial in practical applications \cite{Nec19}. The authors provided an adaptive choice for the step-size of REABK in \cite{wu2022extended}. In this paper, we introduce a strategy for choosing adaptive step-sizes applicable to a broad range of stochastic extended iterative methods. Furthermore, our convergence analysis is more comprehensive, leading to an improved convergence factor.
		
		In addition, we note that the RK method can be seen as a variant of the SGD method \cite{Han2022-xh,needell2014stochasticMP,needell2014stochasticMP,ma2017stochastic,zeng2023randomized}.
		To solve the finite-sum problem $\min f(x):=\frac{1}{m}\sum_{i=1}^mf_i(x)$,
		SGD employs the following update rule
		\begin{equation}
		\label{SGD-iteration}
		x^{k+1}=x^k-\alpha_k\nabla f_{i_k}(x^k),
		\end{equation}
		where $\alpha_k$ is the step-size and $i_k$ is selected randomly. 
		If the objective function $f(x)=\frac{1}{2m}\|Ax-b\|^2_2=\frac{1}{2m}\sum_{i=1}^{m}(A_{i,:}x-b_i)^2$, then SGD reduces to the RK method.
		
		\subsubsection{Momentum acceleration}
		Building upon the success of the HBM method, various recent studies aim to extend this acceleration technique to the SGD method \cite{loizou2020momentum,barre2020complexity,sebbouh2021almost,han2022pseudoinverse,richtarik2020stochastic,
		loizou2021revisiting,morshed2020stochastic}.  The resulting method is  called the stochastic HBM (SHBM) method.
		Specifically, a heavy ball momentum term,  $\beta_{k}(x^k-x^{k-1})$, is incorporated into the SGD method \eqref{SGD-iteration}, leading to the iteration scheme
		$$	x^{k+1}=x^k-\alpha_k\nabla f_{i_k}(x^k)+\beta_{k}(x^k-x^{k-1}).$$
		However, the parameters $ \alpha_k$ and $\beta_k$ for this method may rely on certain problem parameters that are generally inaccessible. For instance, the choices of $\alpha_k$ and $\beta_k$  for the SHBM method \cite{loizou2020momentum,polyak1964some,ghadimi2015global,bollapragada2022fast} for solving the linear system $Ax=b$ require knowledge of the largest singular value as well as the smallest nonzero singular value of matrix $A$. Hence, it is an open problem whether one can  design a theoretically supported adaptive SHBM method \cite{barre2020complexity,bollapragada2022fast}.
		For certain types of problems, some recent works have provided answers to this question \cite{saab2022adaptive,barre2020complexity,zeng2023adaptive,zeng2023fast,lorenz2023minimal,jin2024adaptive}.
		
		Our result is closely related to the work \cite{zeng2023adaptive}, where an adaptive SHBM (ASHBM) was developed to solve the stochastic problem reformulated from consistent linear systems.  
		It was demonstrated that ASHBM  exhibits convergence bound that is at least as that of the non-momentum method. 
		Actually, the strategy for adaptively updating the iterate $z^{k+1}$ in our stochastic extended iterative method with HBM is inspired by this ASHBM. 
		However, our adaptive strategy for updating $x^{k+1}$ is distinct from the aforementioned works.
		
		\subsection{Notations}
		
		For any random variable $\xi$, let $\mathbb{E}[\xi]$ denote its expectation. For any matrix $A \in \mathbb{R}^{m \times n}$, we use $A_{i, :}$, $A_{:, j}$, $A^\top$, $A^\dag$, $\|A\|_F$, $\sigma_{\max}(A)$, $\sigma_{\min}(A)$, $\text{rank}(A)$, $\text{Range}(A)$, and $\text{Range}(A)^{\perp}$ to denote the $i$-th row, the $j$-th column, the transpose, the Moore-Penrose pseudoinverse, the Frobenius norm, the largest singular value, the smallest nonzero singular value, the rank, the range space, and the orthogonal complement space of the range space of $A$, respectively.
		When $A$ is symmetric and positive semidefinite, we use $\lambda_{\max}(A)$ and $\lambda_{\min}(A)$ to denote the largest eigenvalue, and the smallest nonzero eigenvalue of $A$, respectively.
		Given $\mathcal{J} \subseteq [m]:=\{1,\ldots,m\}$, the complementary set of $\mathcal{J} $ is denoted by $\mathcal{J} ^c$, i.e. $\mathcal{J} ^c=[m] \setminus \mathcal{J}$.
		We use $A_{\mathcal{J} ,:}$ and $A_{:,\mathcal{J} }$ to denote the row and column submatrix indexed by $\mathcal{J} $, respectively.
		For any vector $b \in \mathbb{R}^m$, we use $b_i$, $\|b\|_2$, and $b_{\text{Range}(A)}$ to denote the $i$-th  entry, the Euclidean norm, and the orthogonal projection onto $\text{Range}(A)$ of $b$, respectively. The identity matrix and the unit vector are denoted by $I $
		and $e_i$, respectively.
		
		\subsection{Organization}
		
		The remainder of the paper is organized as follows. In Section \ref{Section-2}, we investigate stochastic reformulations of the least-squares problem.
		Section \ref{Section-3} and Section \ref{Section-4}  describe the stochastic extended iterative method and its momentum variant, respectively. Section \ref{Section-5} reports the mentioned numerical experiments and Section \ref{Section-6} concludes the paper. Proofs of all main results are provided in the appendix.
		
		\section{Stochastic reformulation of the least-squares problem}
		\label{Section-2}
		
		In this section, we will present several  equivalent  reformulations of the least-squares problem \eqref{LS_norm}. These reformulations serve three key purposes: (1) providing new theoretical perspectives on the least-squares problem, (2) establishing an accessible framework that enables researchers across different domains to incorporate their specialized knowledge, and (3) offering comprehensive insights into our proposed algorithm.  We incorporate  stochasticity through user-defined probability spaces $(\Omega, \mathcal{F}, P)$ and $(\overline{\Omega}, \overline{\mathcal{F}}, \overline{P})$, which characterize the ensembles of sampling matrices $S$ and $T$, respectively.
		
		{\bf Unconstrained stochastic optimization problem.}
		It is well-known that the least-squares problem \eqref{LS_norm}
		can be transformed equivalently into solving the following consistent augmented linear system \cite{arioli1989augmented, bai2021greedy}
		\begin{equation}\label{augmented}
		\left\{\begin{array}{ll}
			A^{\top}z=0,
			\\
			Ax=b-z,
		\end{array}
		\right.
		\end{equation}
		which can also be written in the form of the augmented matrix
		\[
		\begin{bmatrix}
		I & A\\
		A^\top &0
		\end{bmatrix}\begin{bmatrix}
		z\\
		x
		\end{bmatrix}=\begin{bmatrix}
		b\\
		0
		\end{bmatrix} 
		\Leftrightarrow \tilde{A} \tilde{x} = \tilde{b}.
		\]
		We now consider the following least-square problem for the augmented linear  system
		\[
		\min_{\tilde{x}\in \mathbb{R}^{m+n}} \Phi (\tilde{x}) := \frac{1}{2} \| \tilde{A} \tilde{x} - \tilde{b} \|_2^2. 
		\] Just as the sketching strategy applied in \cite{Gow15,zeng2023adaptive}, we take on a special sketching matrix $ 
		\tilde{S} = \begin{bmatrix}
		S & 0\\
		0 & T
		\end{bmatrix} $, where random matrices $S$ and $T$ are drawn from probability spaces $(\Omega, \mathcal{F}, P)$ and $(\overline{\Omega}, \overline{\mathcal{F}}, \overline{P})$, respectively. We can arrive at the following unconstrained stochastic optimization problem
		\begin{equation}
		\label{SOP1_re}
		\min\limits_{(x, z) \in \mathbb{R}^{n}\times \mathbb{R}^{m}} \Phi(x, z):=\mathbb{E} \left[\Phi_{S, T}(x, z)\right],
		\end{equation}
		where $
		\Phi_{S, T}(x, z):=\frac{1}{2} \|T^{\top}A^{\top}z \|_2^2+\frac{1}{2} \|S^\top(Ax-(b-z))\|^2_2.
		$

		{\bf Constrained stochastic optimization problem.}
		Let
		\begin{equation}
		\label{def_g_f}
		g_{T}(z):=\frac{1}{2} \|T^{\top}A^{\top}z\|_2^2, \ \ f_{S}(x, z):=\frac{1}{2} \|S^\top(Ax-(b-z))\|^2_2,
		\end{equation}
		and 
		\begin{equation}
		\label{def_Eg_f}
		g(z):=\mathbb{E}_{T \in \overline{\Omega}} \left[g_{T}(z)\right], \ \ f(x, z):=\mathbb{E}_{S\in\Omega} \left[f_{S}(x, z)\right].
		\end{equation} 
		We also consider the following constrained stochastic optimization problem
		\begin{equation}\label{SOP2}
			\text{Find} \ x, z \ \
			\text{subject to} \ g(z)=0,
			\ f(x, z)=0.
			\end{equation}
			
			{\bf Multiobjective optimization problem.} Consider  the vector-valued function $\Psi_{S,T}: \mathbb{R}^{n}\times\mathbb{R}^m\to \mathbb{R}\times\mathbb{R}$ defined by $\Psi_{S,T}(x,z):=\left(g_T(z),f_S(x, z)\right)$. Let $\Psi(x,z):= \mathbb{E}[\Psi_{S,T}(x,z)]=\left(g(z),f(x, z)\right)$.  The associated unconstrained multiobjective optimization problem is defined as
			\begin{equation}\label{MOJ_xie}
		\min \limits_{(x, z)\in\mathbb{R}^{n} \times \mathbb{R}^{m}} \Psi(x,z).
		\end{equation}
		In most cases, it is impossible to find a single point that minimizes all objective functions at once, so it is necessary to consider the concept of \emph{Pareto} optimality.   We refer the reader to \cite{sawaragi1985theory,fukuda2014survey,tanabe2019proximal} for more details about  the multiobjective optimization problem. However, we can find a single point that minimizes all objective functions for \eqref{MOJ_xie}. 
		Indeed, since $g(z)\geq0 $ and $f(x,z)\geq0 $, and $g(A^{\dagger}b)=0$ and $f(A^{\dagger}b, b_{\text{Range}(A)^{\perp}})=0$, we know that $(A^{\dagger}b, b_{\text{Range}(A)^{\perp}})$ can minimize all objective functions for the multiobjective optimization problem \eqref{MOJ_xie}.
		
		\subsection{Equivalent expressions of the stochastic reformulations}
		
		For convenience, we use $\mathcal{X}_{\text{aug}}$, $\mathcal{X}_{\Phi}$, $\mathcal{X}_{g, f}$, and $\mathcal{X}_{\Psi}$ to denote the sets of minimizers for the augmented linear system \eqref{augmented}, the unconstrained stochastic optimization problem \eqref{SOP1_re}, the constrained stochastic optimization problem \eqref{SOP2}, and the multiobjective optimization problem \eqref{MOJ_xie}, respectively. 
		We have the following theorem which establishes the equivalence between these three stochastic formulations.
		
		\begin{theorem}\label{Iden}
		For any	probability spaces $(\Omega, \mathcal{F}, P)$ and $(\overline{\Omega}, \overline{\mathcal{F}}, \overline{P})$, we have $\mathcal{X}_{\Phi}=\mathcal{X}_{g, f}=$ $\mathcal{X}_{\Psi}$.
		\end{theorem}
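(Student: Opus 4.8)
The plan is to exploit the additive structure of the unconstrained objective. First I would observe that, since $\Phi_{S,T}(x,z) = g_T(z) + f_S(x,z)$ directly from the definitions in \eqref{def_g_f}, linearity of expectation gives the pointwise identity $\Phi(x,z) = g(z) + f(x,z)$; here the marginal expectations over $S$ and $T$ suffice because $g_T$ depends only on $T$ and $f_S$ only on $S$, so no independence assumption is needed. Both summands are expectations of squared Euclidean norms, hence $g(z) \geq 0$ and $f(x,z) \geq 0$ for every $(x,z)$, and therefore $\Phi \geq 0$ as well.

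Next I would verify that this nonnegative objective actually attains the value zero. As noted in the discussion preceding the theorem, the pair $(A^{\dagger}b, b_{\text{Range}(A)^{\perp}})$ satisfies $A^{\top}b_{\text{Range}(A)^{\perp}} = 0$ and $A(A^{\dagger}b) = b_{\text{Range}(A)} = b - b_{\text{Range}(A)^{\perp}}$, so that $g(b_{\text{Range}(A)^{\perp}}) = 0$ and $f(A^{\dagger}b, b_{\text{Range}(A)^{\perp}}) = 0$, whence $\Phi = 0$ there. Thus $\min \Phi = 0$, the minimum is attained, and $\mathcal{X}_{\Phi} = \{(x,z) : \Phi(x,z) = 0\}$. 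Because $\Phi = g + f$ is a sum of two nonnegative terms, $\Phi(x,z) = 0$ holds exactly when $g(z) = 0$ and $f(x,z) = 0$, which is precisely the defining condition of $\mathcal{X}_{g,f}$ in \eqref{SOP2}. This yields the first equality $\mathcal{X}_{\Phi} = \mathcal{X}_{g,f}$.

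For the multiobjective part I would argue that the attainability of a simultaneous minimizer collapses the Pareto set onto $\mathcal{X}_{g,f}$. Since $g$ and $f$ are separately bounded below by $0$ and both vanish at $(A^{\dagger}b, b_{\text{Range}(A)^{\perp}})$, the value $(0,0)$ is an attainable ideal (utopia) value for $\Psi = (g,f)$. Any $(x,z)$ with $g(z) = f(x,z) = 0$ attains this ideal value and is therefore Pareto optimal; conversely, any $(x,z)$ with $g(z) > 0$ or $f(x,z) > 0$ is strictly dominated by every point of $\mathcal{X}_{g,f}$, since such a point weakly improves both coordinates and strictly improves at least one, and hence $(x,z)$ is not Pareto optimal. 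Thus $\mathcal{X}_{\Psi} = \mathcal{X}_{g,f}$, completing the chain of equalities.

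I expect the multiobjective step to be the only part demanding genuine care, because the meaning of ``minimizer'' for \eqref{MOJ_xie} is the set of Pareto optimal points rather than the set of simultaneous minimizers; the crucial observation is that attainability of the ideal value $(0,0)$ forces these two notions to coincide. The remaining steps are routine once the identity $\Phi = g + f$ and the nonnegativity of the two summands are in place.
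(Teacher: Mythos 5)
Your proposal is correct and follows essentially the same route as the paper's proof: decompose $\Phi = g + f$ via linearity of expectation, use nonnegativity of both summands together with the fact that $(A^{\dagger}b,\, b_{\text{Range}(A)^{\perp}})$ attains the value zero, and identify $\mathcal{X}_{\Phi}$ with the common zero set $\{(x,z) : g(z)=0,\ f(x,z)=0\} = \mathcal{X}_{g,f}$. The one place you go beyond the paper is the explicit Pareto-dominance argument for $\mathcal{X}_{\Psi}$ (ideal value $(0,0)$ is attained, so Pareto optimality coincides with simultaneous minimization); the paper asserts this equality only implicitly, relying on its remark preceding the theorem, so your version usefully fills in that step.
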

		\begin{proof} It can be verified that $(A^{\dagger}b, b_{\text{Range}(A)^{\perp}})$ belongs to both $ \mathcal{X}_{\Phi}$ and $\mathcal{X}_{g, f}$, which implies that both of them are nonempty.
		Since $\Phi(A^{\dagger}b, b_{\text{Range}(A)^{\perp}})=0$, we know that 
		$$
		\left(\overline{x}, \overline{z}\right) \in \mathcal{X}_{\Phi} \Leftrightarrow\Phi\left(\overline{x}, \overline{z}\right)=0 \Leftrightarrow
		\left\{\begin{array}{ll}
			\mathbb{E}_{T \in \overline{\Omega}} \left[\frac{1}{2} \left\|T^{\top}A^{\top}\overline{z}\right\|_2^2\right]=0,
			\\
			\mathbb{E}_{S\in\Omega} \left[\frac{1}{2} \left\|S^\top(A\overline{x}-(b-\overline{z}))\right\|^2_2\right]=0,
		\end{array}
		\right.
		\Leftrightarrow
		\left\{\begin{array}{ll}
			g(\overline{z})=0,
			\\
			f(\overline{x}, \overline{z})=0,
		\end{array}
		\right.
		$$
		which implies that  $\mathcal{X}_{\Phi}=\mathcal{X}_{g, f}=$ $\mathcal{X}_{\Psi}$. This completes the proof of the theorem.
		\end{proof}
		
		\subsection{Exactness of the reformulations} In this subsection,
		we examine the \emph{exactness} of the reformulations. By exactness, we mean that  $x^*$ is a solution to \eqref{LS_norm} if and only if there exists a $z^*$ such that $(x^*,z^*)$ is a minimizer of the reformulated problem. We define 
		\begin{equation}
		\label{def_H_H}
		M:=\mathbb{E}_{S\in\Omega}[SS^\top], \ \ \overline{M}:=\mathbb{E}_{T\in\overline{\Omega}}[TT^\top],
		\end{equation}
		and
		\begin{equation}\label{def_setA}
		\mathcal{S}:=\{y\in \mathbb{R}^m \mid M^{\frac{1}{2}}y \in M^{\frac{1}{2}} \left(b+\text{Range}(A)\right),  y \in \text{Null}(A  \overline{M} A^{\top})\}.
		\end{equation}
		We have the following result which provides a  necessary and sufficient
		condition for the stochastic reformulations \eqref{SOP1_re} to be exact.
		
		\begin{theorem}\label{exactness}
		Suppose that $M$ and $\mathcal{S}$ are defined by \eqref{def_H_H} and \eqref{def_setA}, respectively. Then	$\mathcal{X}_{\Phi}=\mathcal{X}_{\text{aug}}$ if and only if $\mathcal{S}=b_{\text{Range}(A)^{\perp}}$ and $\text{Null}\left(A^{\top} M  A\right)=\text{Null}(A)$.
		\end{theorem}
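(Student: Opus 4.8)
The plan is to translate the set equality $\mathcal{X}_{\Phi} = \mathcal{X}_{\text{aug}}$ into linear-algebraic conditions on the null spaces of $M$ and $A\overline{M} A^\top$, and then match these against the two stated conditions, which I will abbreviate as (a) $\mathcal{S}=\{b_{\text{Range}(A)^{\perp}}\}$ and (b) $\text{Null}(A^\top M A)=\text{Null}(A)$. First I would evaluate the two expectations explicitly: since $\overline{M} = \mathbb{E}_T[TT^\top]$ and $M = \mathbb{E}_S[SS^\top]$ are symmetric positive semidefinite, one gets $g(z) = \frac12 z^\top A\overline{M} A^\top z$ and $f(x,z) = \frac12 (Ax-(b-z))^\top M (Ax-(b-z))$. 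Because a positive semidefinite quadratic form vanishes exactly on the null space of its matrix, the characterization $(\bar x,\bar z)\in\mathcal{X}_{\Phi} \Leftrightarrow g(\bar z)=0,\ f(\bar x,\bar z)=0$ from Theorem \ref{Iden} becomes: $\bar z\in\text{Null}(A\overline{M} A^\top)$ and $A\bar x-(b-\bar z)\in\text{Null}(M)$ (using $M^{1/2}v=0\Leftrightarrow Mv=0$). In parallel I would record the exact description of $\mathcal{X}_{\text{aug}}$: from $A^\top z=0$ and $Ax=b-z$ one forces $z=b_{\text{Range}(A)^{\perp}}$ (the only vector of $\text{Range}(A)^{\perp}$ for which $b-z\in\text{Range}(A)$) and $Ax=b_{\text{Range}(A)}$, so $\mathcal{X}_{\text{aug}}=\{(x,b_{\text{Range}(A)^{\perp}}): Ax=b_{\text{Range}(A)}\}$.

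Next I would establish the trivial inclusion $\mathcal{X}_{\text{aug}}\subseteq\mathcal{X}_{\Phi}$ (if $A^\top z=0$ then $g(z)=0$, and if $Ax=b-z$ then $f(x,z)=0$), so that exactness reduces to proving $\mathcal{X}_{\Phi}\subseteq\mathcal{X}_{\text{aug}}$. Then I would identify $\mathcal{S}$ with the $z$-projection of $\mathcal{X}_{\Phi}$: eliminating $\bar x$ from $A\bar x-(b-\bar z)\in\text{Null}(M)$, an admissible $\bar x$ exists iff $M^{1/2}(b-\bar z)\in M^{1/2}\text{Range}(A)$, equivalently $M^{1/2}\bar z\in M^{1/2}(b+\text{Range}(A))$; combined with $\bar z\in\text{Null}(A\overline{M} A^\top)$ this is exactly the defining condition of $\mathcal{S}$. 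A short computation shows $b_{\text{Range}(A)^{\perp}}\in\mathcal{S}$ always, since it lies in $\text{Null}(A^\top)\subseteq\text{Null}(A\overline{M} A^\top)$ and $b_{\text{Range}(A)^{\perp}}-b=-b_{\text{Range}(A)}\in\text{Range}(A)$. Finally I would rewrite condition (b): $v\in\text{Null}(A^\top M A)\Leftrightarrow M^{1/2}Av=0\Leftrightarrow Av\in\text{Null}(M)$, so (b) is equivalent to $\text{Range}(A)\cap\text{Null}(M)=\{0\}$.

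With these reformulations in hand the two implications are short. For the forward direction, assume (a) and (b) and take $(\bar x,\bar z)\in\mathcal{X}_{\Phi}$; then $\bar z\in\mathcal{S}=\{b_{\text{Range}(A)^{\perp}}\}$ gives $A^\top\bar z=0$, and the membership $A\bar x-(b-\bar z)=A\bar x-b_{\text{Range}(A)}\in\text{Range}(A)\cap\text{Null}(M)=\{0\}$ gives $A\bar x=b-\bar z$, so $(\bar x,\bar z)\in\mathcal{X}_{\text{aug}}$. For the converse, assume $\mathcal{X}_{\Phi}=\mathcal{X}_{\text{aug}}$; comparing $z$-projections and using that $\mathcal{S}$ is the $z$-projection of $\mathcal{X}_{\Phi}$ while $\{b_{\text{Range}(A)^{\perp}}\}$ is that of $\mathcal{X}_{\text{aug}}$ yields (a). For (b), given any $v$ with $Av\in\text{Null}(M)$, I would test the point $(A^\dagger b+v,\ b_{\text{Range}(A)^{\perp}})$: it lies in $\mathcal{X}_{\Phi}$ because $A(A^\dagger b+v)-b_{\text{Range}(A)}=Av\in\text{Null}(M)$, hence by exactness it lies in $\mathcal{X}_{\text{aug}}$, forcing $Av=0$ and thus $\text{Range}(A)\cap\text{Null}(M)=\{0\}$.

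The routine parts are the quadratic-form and square-root manipulations; the step that needs the most care is the identification of $\mathcal{S}$ with the $z$-projection of $\mathcal{X}_{\Phi}$, where the existential quantifier over $\bar x$ must be handled correctly (an admissible $\bar x$ exists precisely when the $M^{1/2}$-range condition holds), together with the choice of the witness point $(A^\dagger b+v,\,b_{\text{Range}(A)^{\perp}})$ that lets me extract condition (b) from exactness. I expect no genuine obstacle beyond keeping these range/null-space translations and quantifiers straight.
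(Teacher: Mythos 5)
Your proof is correct and follows essentially the same route as the paper's: both characterize $\mathcal{X}_{\text{aug}}$ as $\{(x,\,b_{\text{Range}(A)^{\perp}}) \mid Ax=b_{\text{Range}(A)}\}$, characterize $\mathcal{X}_{\Phi}$ by the vanishing of the two positive semidefinite quadratic forms (equivalently, the null-space/range conditions defining $\mathcal{S}$), and then compare the two sets. The only difference is one of detail: you make explicit the final equivalence that the paper asserts tersely, namely the identification of $\mathcal{S}$ with the $z$-projection of $\mathcal{X}_{\Phi}$ and the witness point $(A^{\dagger}b+v,\,b_{\text{Range}(A)^{\perp}})$ used to extract $\text{Null}(A^{\top}MA)=\text{Null}(A)$ from exactness.
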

		
		\begin{proof}
		Since $\mathcal{X}_{\text{aug}}=\{(x, z)  \mid A^{\top}z=0, Ax=b-z\}$, we know that for any $\left(\tilde{x}, \tilde{z}\right) \in \mathcal{X}_{\text{aug}}$, it holds that $\tilde{z} \in \text{Null}(A^{\top}) \cap \left(b +\text{Range}(A)\right)$. Note that 
		\begin{equation}\label{set_s_unique}
			\text{Null}(A^{\top}) \cap \left(b +\text{Range}(A)\right)=\text{Range}(A)^{\perp} \cap \left(b +\text{Range}(A)\right)=b_{\text{Range}(A)^{\perp}},
		\end{equation}
		which implies that  $\tilde{z}=b_{\text{Range}(A)^{\perp}}$. Hence,  $\mathcal{X}_{\text{aug}}$ can be rewritten as 
		$\mathcal{X}_{\text{aug}}=\{(x, b_{\text{Range}(A)^{\perp}})\mid Ax=b_{\text{Range}(A)}\}.$	Furthermore, according to the proof of Theorem \ref{Iden}, for any $\left(\overline{x}, \overline{z}\right) \in \mathcal{X}_{\Phi}$, it holds that
		\begin{equation} \nonumber
			\begin{aligned}
				(\overline{x}, \overline{z}) \in \mathcal{X}_{\Phi}
				&
				\Leftrightarrow \left\{\begin{array}{ll}
					g(\overline{z})=0
					\\
					f(\overline{x}, \overline{z})=0
				\end{array}
				\right.\Leftrightarrow
				\left\{\begin{array}{ll}
					\overline{M}^{\frac{1}{2}}A^{\top}\overline{z}=0
					\\
					M^{\frac{1}{2}}A\overline{x}=M^{\frac{1}{2}}(b-\overline{z})
				\end{array}
				\right. \Leftrightarrow
				\left\{\begin{array}{ll}
					\overline{z} \in \text{Null}\left(A \overline{M} A^{\top}\right),
					\\
					M^{\frac{1}{2}} \overline{z} \in M^{\frac{1}{2}} \left(b+\text{Range}(A)\right),
					\\
					M^{\frac{1}{2}}A\overline{x}=M^{\frac{1}{2}}(b-\overline{z}).
				\end{array}
				\right.
			\end{aligned}
		\end{equation}
		Consequently, we can get that $\mathcal{X}_{\Phi}=\mathcal{X}_{\text{aug}}$ if and only if
		\begin{equation}\nonumber
			\left\{\begin{array}{ll}
				\mathcal{S}=b_{\text{Range}(A)^{\perp}},
				\\
				\{x \in \mathbb{R}^n \mid Ax=b_{\text{Range}(A)}\}=\{x \in \mathbb{R}^n \mid M^{\frac{1}{2}}Ax=M^{\frac{1}{2}} b_{\text{Range}(A)}\},
			\end{array}
			\right.
		\end{equation}
		i.e.,  $\mathcal{S}=b_{\text{Range}(A)^{\perp}}$ and $\text{Null}\left(A^{\top} M  A\right)=\text{Null}(A)$.
		\end{proof}
		
		\begin{remark}
		\label{remark_exact}
		By	combining Theorems \ref{Iden} and \ref{exactness}, we can conclude that $\mathcal{X}_{\Phi}=\mathcal{X}_{g, f}=\mathcal{X}_{\Psi}=\mathcal{X}_{\text{aug}}$ if and only if $\mathcal{S}=b_{\text{Range}(A)^{\perp}}$ and $\text{Null}\left(A^{\top} M  A\right)=\text{Null}(A)$.
		\end{remark}
		From the proof of Theorem \ref{exactness}, we know that $x^*$ is a solution to the least-squares problem \eqref{LS_norm} if and only if $(x^*, b_{\text{Range}(A)^{\perp}})\in \mathcal{X}_{\text{aug}}$. Hence, Theorem \ref{exactness} present a necessary and sufficient condition for the reformulations to be exact.
		Next, we provide a sufficient condition for the exactness. 
		\begin{corollary}\label{suff}
		If $\overline{M}=\mathbb{E}_{T \in \overline{\Omega}} \left[TT^{\top}\right]$ and $M=\mathbb{E}_{S \in \Omega}\left[SS^\top\right]$ are positive definite matrices, then the stochastic  reformulations \eqref{SOP1_re}, \eqref{SOP2}, and \eqref{MOJ_xie}  are exact.
		\end{corollary}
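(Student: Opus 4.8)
The plan is to reduce the statement to the necessary-and-sufficient criterion already in hand rather than re-deriving exactness from scratch. By Remark \ref{remark_exact} (which combines Theorems \ref{Iden} and \ref{exactness}), the reformulations \eqref{SOP1_re}, \eqref{SOP2}, \eqref{MOJ_xie} are all exact precisely when
\[
\mathcal{S} = b_{\text{Range}(A)^{\perp}} \qquad\text{and}\qquad \text{Null}\left(A^{\top} M A\right) = \text{Null}(A).
\]
So it suffices to show that positive definiteness of $M$ and $\overline{M}$ forces both conditions, and I would verify them one at a time.

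For the null-space identity, the inclusion $\text{Null}(A) \subseteq \text{Null}(A^{\top} M A)$ is immediate. For the reverse, I would take any $x$ with $A^{\top} M A x = 0$ and pass to the quadratic form $x^{\top} A^{\top} M A x = (Ax)^{\top} M (Ax) = 0$; positive definiteness of $M$ then yields $Ax = 0$, i.e.\ $x \in \text{Null}(A)$, giving equality. For the description of $\mathcal{S}$, I would unwind its definition \eqref{def_setA}. Because $M$ is positive definite, $M^{\frac{1}{2}}$ is invertible, so the constraint $M^{\frac{1}{2}} y \in M^{\frac{1}{2}}\left(b + \text{Range}(A)\right)$ collapses to $y \in b + \text{Range}(A)$. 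For the other constraint, I would rewrite $y \in \text{Null}(A\overline{M}A^{\top})$ as $(A^{\top} y)^{\top}\overline{M}(A^{\top} y) = 0$ and invoke positive definiteness of $\overline{M}$ to deduce $A^{\top} y = 0$, that is $y \in \text{Range}(A)^{\perp}$. Hence $\mathcal{S} = \text{Range}(A)^{\perp} \cap \left(b + \text{Range}(A)\right)$, which by \eqref{set_s_unique} is exactly the single point $b_{\text{Range}(A)^{\perp}}$. With both conditions confirmed, Remark \ref{remark_exact} finishes the argument.

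I do not expect a genuine obstacle here, since the corollary is a direct specialization of Theorem \ref{exactness}; the only point requiring mild care is correctly transporting positive definiteness through the factor $M^{\frac{1}{2}}$ appearing in the definition of $\mathcal{S}$ and through the two quadratic-form reductions, so that the set constraints simplify to the unweighted range/null-space conditions of \eqref{set_s_unique}. Everything else is routine linear algebra.
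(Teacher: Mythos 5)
Your proposal is correct and follows essentially the same route as the paper's own proof: both reduce the corollary to the two conditions of Remark \ref{remark_exact}, collapse $\mathcal{S}$ to $\text{Null}(A^\top)\cap(b+\text{Range}(A))=b_{\text{Range}(A)^{\perp}}$ via \eqref{set_s_unique}, and use positive definiteness to get $\text{Null}(A^\top M A)=\text{Null}(A)$. The only difference is that you spell out the quadratic-form arguments that the paper leaves implicit.
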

		\begin{proof}
		Now we have $
		\mathcal{S} =\left\{y \in \mathbb{R}^m \mid y\in \text{Null}(A^\top), y\in b+\text{Range}(A) \right\}=b_{\text{Range}(A)^{\perp}}
		$, where the last equality follows from \eqref{set_s_unique}. Since $\text{Null}\left(A^{\top} M A\right)=\text{Null}(A)$,
		by Remark \ref{remark_exact}, we can conclude that this corollary holds.
		\end{proof}

		In this paper, we make the following assumption on the probability spaces $(\overline{\Omega}, \overline{\mathcal{F}}, \overline{P})$ and $(\Omega, \mathcal{F}, P)$.
		Corollary~\ref{suff} demonstrates that this assumption can provide a sufficient condition for the exactness of the stochastic reformulations \eqref{SOP1_re}, \eqref{SOP2}, and \eqref{MOJ_xie}. Furthermore, the finiteness requirement for the sampling space $\Omega$ is essential to guarantee that the key parameter $\Lambda_{\min}$ in \eqref{rho} is positive.

		\begin{assumption}
		\label{Ass}
		Let $(\overline{\Omega}, \overline{\mathcal{F}}, \overline{P})$ and $(\Omega, \mathcal{F}, P)$  be  the probability spaces from which the sampling matrices are drawn. We assume that $\mathbb{E}_{T \in \overline{\Omega}} \left[TT^{\top}\right]$ and $\mathop{\mathbb{E}}_{S\in\Omega} \left[S S^\top\right]$ are positive definite matrices. Additionally, we assume that the sampling space $\Omega$ is finite.
		\end{assumption}

		\section{Stochastic extended iterative methods with adaptive step-sizes}
		\label{Section-3}
		In this section, we consider employing the stochastic gradient descent (SGD) with adaptive step-sizes to solve the stochastic reformulations \eqref{SOP1_re}, \eqref{SOP2}, and \eqref{MOJ_xie} of the linear system \eqref{LS}.
		Specifically, let  $z^0\in b+\text{Range}(A)$ and $x^0\in \text{Range}(A^{\top})$ be  the initial points, our stochastic extended iterative method alternately minimizes $g(z)$ and $f(x,z)$ in \eqref{def_Eg_f} and the scheme can be written as 
		\begin{equation} \label{framework_a}
		\begin{aligned}
			z^{k+1}&=z^k-\mu_k\nabla g_{T_k} (z^k)=z^k-\mu_k AT_kT_k^{\top}A^{\top}z^k, \\
			x^{k+1}&=x^k-\alpha_k \nabla_x f_{S_k}(x^k,z^{k+1})=x^k-\alpha_k A^\top S_k S_k^\top(Ax^k-(b-z^{k+1})),
		\end{aligned}
		\end{equation}
		where $T_k$ and $S_k$ are drawn from the sample space $\overline{\Omega}$ and $\Omega$, respectively, and $\mu_k$ and $\alpha_k$ are the step-sizes.
		Specifically, the step-size $\mu_k$ is chosen as
		\begin{equation}\label{mu}
		\mu_k=
		\left\{\begin{array}{ll}
			(2-\eta) \overline{L}_{\text{adap}}^{(k)}, & \text{if} \; T_k^{\top}A^{\top}z^k \neq 0;
			\\
			0, & \text{otherwise},
		\end{array}
		\right.
		\end{equation}
		where
		\begin{equation}\label{overline_L_adapt}
		\begin{aligned}
			\overline{L}_{\text{adap}}^{(k)}:=\frac{\|T_k^{\top}A^{\top}z^k\|_2^2}{\|AT_kT_k^{\top}A^{\top}z^k\|_2^2},
		\end{aligned}
		\end{equation}
		and $\eta \in (0, 2)$ is the relaxation parameter for adjusting the step-size $\mu_k$. The step-size $\alpha_k$ is chosen as
		\begin{equation}\label{alpha_basic}
		\alpha_k=
		\left\{\begin{array}{ll}
			(2-\zeta) L_{\text{adap}}^{(k)}, & \text{if} \; S_k^{\top}\left(Ax^k-(b-z^{k+1})\right) \neq 0;
			\\
			0, & \text{otherwise},
		\end{array}
		\right.
		\end{equation}
		where
		\begin{equation}\label{L_adapt}
		\begin{aligned}
			L_{\text{adap}}^{(k)}:=\frac{\left\|S_k^{\top}\left(Ax^k-(b-z^{k+1})\right)\right\|_2^2}{\left\|A^{\top}S_kS_k^{\top}\left(Ax^k-(b-z^{k+1})\right)\right\|_2^2},
		\end{aligned}
		\end{equation}
		and $\zeta \in (0, 2)$ is the relaxation parameter for adjusting the step-size $\alpha_k$. 
		The following lemma ensures that the step-sizes $\mu_k$ and $\alpha_k$ are well-defined.
		\begin{lemma}[\cite{zeng2023adaptive}, Lemma 2.3]
		\label{lemma-non}
		Assume that $B\in\mathbb{R}^{p\times q}$ and the linear system $Bx=c$ is consistent. Then for any matrix $S \in \mathbb{R}^{p\times \ell}$ and any vector $\tilde{x} \in \mathbb{R}^{q}$, it holds that $B^\top SS^\top(B\tilde{x}-c) \neq 0$ if and only if $S^\top(B\tilde{x}-c) \neq 0$.
		\end{lemma}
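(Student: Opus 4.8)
The plan is to rewrite the claim in a more transparent form and then isolate the single implication that actually requires the consistency hypothesis. Setting $v := B\tilde{x} - c$, the assertion becomes the equivalence $B^\top S S^\top v \neq 0 \Leftrightarrow S^\top v \neq 0$, which I would establish in contrapositive form, namely $B^\top S S^\top v = 0 \Leftrightarrow S^\top v = 0$. One direction here is immediate and needs no hypotheses: if $S^\top v = 0$ then $B^\top S S^\top v = B^\top S\cdot 0 = 0$. Thus all the content lies in the reverse implication $B^\top S S^\top v = 0 \Rightarrow S^\top v = 0$.

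For that implication I would argue as follows. Consistency of $Bx = c$ means $c \in \text{Range}(B)$, and since $B\tilde{x} \in \text{Range}(B)$ as well, we obtain the crucial membership $v = B\tilde{x} - c \in \text{Range}(B)$. Now suppose $B^\top S S^\top v = 0$ and write $w := S S^\top v$; the hypothesis says $w \in \text{Null}(B^\top)$, and using the standard identity $\text{Null}(B^\top) = \text{Range}(B)^\perp$ we conclude $w \perp v$. Pairing out the inner product then gives $0 = \langle v, S S^\top v\rangle = \langle S^\top v, S^\top v\rangle = \|S^\top v\|_2^2$, so $S^\top v = 0$. Taking contrapositives of both implications delivers the stated equivalence for the nonzero quantities.

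The step I expect to be the heart of the matter — and the only place the consistency assumption is genuinely used — is the membership $v \in \text{Range}(B)$. Without it, the fact that $S S^\top v$ lies in $\text{Range}(B)^\perp$ would not force its inner product with $v$ to vanish, and the inner-product collapse $\langle v, S S^\top v\rangle = \|S^\top v\|_2^2$ would no longer yield $S^\top v = 0$. Everything else is routine: the orthogonal-complement description $\text{Null}(B^\top) = \text{Range}(B)^\perp$ together with the elementary identity relating $\langle v, S S^\top v\rangle$ to $\|S^\top v\|_2^2$.
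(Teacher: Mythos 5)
Your proof is correct: consistency places $v=B\tilde{x}-c$ in $\text{Range}(B)$, the hypothesis $B^\top SS^\top v=0$ places $SS^\top v$ in $\text{Null}(B^\top)=\text{Range}(B)^\perp$, and the collapse $0=\langle v, SS^\top v\rangle=\|S^\top v\|_2^2$ finishes the nontrivial direction, the other direction being trivial. The paper itself offers no proof of this lemma --- it is quoted from the reference \cite{zeng2023adaptive} --- and your argument is precisely the standard one used there (equivalently phrased by writing $c=Bx^*$ and pairing $B^\top SS^\top B(\tilde{x}-x^*)=0$ against $\tilde{x}-x^*$), so you have supplied a correct, self-contained justification of the cited fact.
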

		
		Since \(A^{\top}z = 0\) is consistent, Lemma \ref{lemma-non} implies that \(T_k^{\top}A^{\top}z^k \neq 0\) ensures \(AT_kT_k^{\top}A^{\top}z^k \neq 0\), confirming \(\overline{L}_{\text{adap}}^{(k)}\) is well-defined. By the iteration scheme of \(z^k\), we have \(z^k \in z^0 + \operatorname{Range}(A) = b + \operatorname{Range}(A)\), implying \(b - z^{k} \in \operatorname{Range}(A)\), so \(Ax = b - z^{k}\) is consistent. Thus, \(L_{\text{adap}}^{(k)}\) is also well-defined. Therefore, the step-sizes $\mu_k$ and $\alpha_k$ are well-defined.  We emphasize that when $T_k^{\top}A^{\top}z^k = 0$ then $AT_kT_k^{\top}A^{\top}z^k = 0$, and it holds that $z^{k+1}=z^k$ for any $\mu_k\in\mathbb{R}$. Hence, we set $\mu_k=0$. Similarly, when $S_k^{\top}\left(Ax^k-(b-z^{k+1})\right)=0$, we can set $\alpha_{k}=0$. 
		Now we are ready to formally state the stochastic extended iterative method with adaptive step-sizes in Algorithm \ref{ASE}. 
		
				\begin{algorithm}[htpb]
					\caption{Stochastic extended iterative method with adaptive step-sizes }
					\label{ASE}
					\begin{algorithmic}
						\Require
						$A\in \mathbb{R}^{m\times n}$, $b\in \mathbb{R}^m$, probability spaces $(\overline{\Omega}, \overline{\mathcal{F}}, \overline{P})$ and $(\Omega, \mathcal{F}, P)$, relaxation parameters $\eta \in (0, 2)$ and $\zeta \in (0, 2)$, $k=0$, and initial points $z^0\in b+\text{Range}(A)$ and $x^0\in \text{Range}(A^{\top})$.
						\begin{enumerate}
							\item[1:] Randomly select a sampling matrix $T_{k}\in \overline{\Omega}$.
							\item[2:] Compute the parameter $\mu_k$ in \eqref{mu} and then update
							$$
							z^{k+1}=z^k-\mu_k AT_kT_k^{\top}A^{\top}z^k.
							$$
							\item[3:] Randomly select a sampling matrix $S_{k}\in \Omega$.
							\item[4:] Compute the parameter $\alpha_k$ in \eqref{alpha_basic} and then
							update
							$$
							x^{k+1}=x^k-\alpha_k A^{\top} S_kS_k^{\top}(Ax^k-(b-z^{k+1})).
							$$
							\item[5:] If the stopping rule is satisfied, stop and go to output. Otherwise, set $k=k+1$ and go to Step $1$.
						\end{enumerate}
						
						\Ensure
						The approximate solution $x^k$.
					\end{algorithmic}
				\end{algorithm}
				
				\begin{remark} \label{AREABK}
				The REABK method \eqref{REABK} can be viewed as a special case of Algorithm \ref{ASE} with constant step-sizes.
				Recall that $\left\{\mathcal{I}_1, \mathcal{I}_2, \dots, \mathcal{I}_\ell\right\}$ and $\left\{\mathcal{J}_1, \mathcal{J}_2, \dots, \mathcal{J}_t\right\}$ are partitions of $[m]$ and $[n]$, respectively. Suppose that the sampling spaces $\Omega=\{I_{:, \mathcal{I}_{i}}/\Vert A_{\mathcal{I}_{i}, :} \Vert_F\}_{i=1}^\ell$, $\overline{\Omega}=\{I_{:, \mathcal{J}_{j}}/\Vert A_{:, \mathcal{J}_{j}} \Vert_F\}_{j=1}^t$, and the relaxation parameters $ \mu = \zeta = 1 $. Then, Algorithm \ref{ASE} has the  following iteration scheme
				\begin{equation} \label{aREABK}
					\begin{aligned}
						z^{k+1}&=z^k-\frac{\mu_k}{\Vert A_{:, \mathcal{J}_{j_k}} \Vert_F^2}A_{:, \mathcal{J}_{j_k}}A_{:, \mathcal{J}_{j_k}}^{\top} z^k, \\
						x^{k+1}&=x^k-\frac{\alpha_k}{\Vert A_{\mathcal{I}_{i_k}, :} \Vert_F^2}A_{\mathcal{I}_{i_k}, :}^\top(A_{\mathcal{I}_{i_k}, :}x^k-(b-z^{k+1})_{\mathcal{I}_{i_k}}),
					\end{aligned}
				\end{equation}
				where the step-sizes $\mu_k$ and $\alpha_k$ are defined as follows
				$$
				\mu_k=
				\left\{\begin{array}{ll}
					\frac{ \Vert A_{:, \mathcal{J}_{j_k}}^{\top}z^k \Vert_2^2 \Vert A_{:, \mathcal{J}_{j_k}}\Vert_F^2}{\Vert A_{:, \mathcal{J}_{j_k}} A_{:, \mathcal{J}_{j_k}}^{\top}z^k \Vert_2^2}, & \text{if} \;\; A_{:, \mathcal{J}_{j_k}}^{\top}z^k \neq 0, \vspace{1ex}
					\\
					0, & otherwise,
				\end{array}
				\right.
				$$
				and
				$$
				\alpha_k=
				\left\{\begin{array}{ll}
					\frac{\Vert A_{\mathcal{I}_{i_k}, :}x^k-(b-z^{k+1})_{\mathcal{I}_{i_k}} \Vert_2^2 \Vert A_{\mathcal{I}_{i_k}, :} \Vert_F^2}{\|A_{\mathcal{I}_{i_k}, :}^{\top} \left(A_{\mathcal{I}_{i_k}, :}x^k-(b-z^{k+1})_{\mathcal{I}_{i_k}}\right)\|_2^2}, & \text{if} \; A_{\mathcal{I}_{i_k}, :}x^k-(b-z^{k+1})_{\mathcal{I}_{i_k}} \neq 0, \vspace{1ex}
					\\
					0, & otherwise.
				\end{array}
				\right.
				$$
				It is evident that Algorithm \ref{ASE}  and the REABK method \eqref{REABK} share a similar algorithmic framework. Specifically, when the parameters  $\mu_k$ and $\alpha_k$ are set to be the same constant step-size $\alpha$, Algorithm \ref{ASE} reduces to the REABK method.
				Furthermore,  the randomized multiple row method proposed in \cite{wu2022extended}, where adaptive step-sizes are introduced for REABK, can also be viewed as a special case of Algorithm \ref{ASE}.
			\end{remark}

			\subsection{Convergence analysis}
			For convenience, we define
			\begin{equation}\label{matrix-H-}
				H:=\mathop{\mathbb{E}}\limits_{S \in \Omega}\left[\frac{SS^{\top}}{\left\|A^{\top} S\right\|_2^2}\right] \ \ \text{and} \ \ \overline{H}:=\mathop{\mathbb{E}}\limits_{T \in \overline{\Omega}}\left[\frac{TT^{\top}}{\|AT\|_2^2}\right]. 
			\end{equation}
			The following lemma shows that both $H$ and $\overline{H}$ are positive definite under Assumption \ref{Ass}.
			\begin{lemma}[\cite{lorenz2023minimal}, Lemma 2.3]
				\label{lemma_}
				Suppose Assumption \ref{Ass} holds. Then, the matrices $H$ and $\overline{H}$ are positive definite.
			\end{lemma}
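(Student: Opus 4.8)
The plan is to prove both statements in parallel, since $H$ and $\overline{H}$ share an identical structure with the roles of $(S,\Omega,P,M,A^{\top})$ and $(T,\overline{\Omega},\overline{P},\overline{M},A)$ interchanged; I would spell out the argument for $H$ in full and then invoke this symmetry for $\overline{H}$. The strategy proceeds in two stages: first establish that $H$ is symmetric positive \emph{semi}definite, which is essentially free, and then upgrade semidefiniteness to definiteness by exploiting the positive definiteness of $M=\mathbb{E}_{S\in\Omega}[SS^{\top}]$ guaranteed by Assumption \ref{Ass}.

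For the first stage, I would note that each summand $SS^{\top}/\|A^{\top}S\|_2^2$ is a nonnegative scalar times the Gram matrix $SS^{\top}$, hence symmetric positive semidefinite, and that the expectation of positive semidefinite matrices is again positive semidefinite; thus $H\succeq 0$. Concretely, for any $y\in\mathbb{R}^m$,
\[
y^{\top}H y = \mathbb{E}_{S\in\Omega}\!\left[\frac{y^{\top}SS^{\top}y}{\|A^{\top}S\|_2^2}\right] = \mathbb{E}_{S\in\Omega}\!\left[\frac{\|S^{\top}y\|_2^2}{\|A^{\top}S\|_2^2}\right]\ge 0.
\]

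For the second stage, I would argue by contradiction. Suppose $y\neq 0$ yet $y^{\top}H y=0$. Since the integrand above is nonnegative, the vanishing of its expectation forces $\|S^{\top}y\|_2^2/\|A^{\top}S\|_2^2=0$ for $P$-almost every $S$; because the denominators are strictly positive, this means $S^{\top}y=0$ almost surely. Substituting into the defining expectation for $M$ then gives $y^{\top}M y=\mathbb{E}_{S\in\Omega}[\|S^{\top}y\|_2^2]=0$, contradicting the positive definiteness of $M$. Hence $y^{\top}H y>0$ for every nonzero $y$, i.e., $H$ is positive definite; the identical computation with $\overline{M}\succ 0$ yields $\overline{H}\succ 0$.

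I expect the only genuine subtlety to be the well-definedness of the quotients, namely ensuring $\|A^{\top}S\|_2>0$ (resp.\ $\|AT\|_2>0$) on the support, so that the matrices and the passage from $y^{\top}H y=0$ to $S^{\top}y=0$ both make sense; this is built into the sampling model and can be verified on the support. It is worth stressing that, in contrast to the role finiteness of $\Omega$ plays later for $\Lambda_{\min}$ in \eqref{rho}, the present argument requires no finiteness: the measure-theoretic fact that a nonnegative random variable with zero mean vanishes almost surely carries the entire proof, so the same reasoning covers the possibly infinite space $\overline{\Omega}$. Alternatively, when $\Omega$ is finite one may set $c:=\max_{S\in\Omega}\|A^{\top}S\|_2^2<\infty$ and obtain the sharper matrix inequality $H\succeq \tfrac{1}{c}M\succ 0$ at once, which additionally records an explicit positive lower bound on $\lambda_{\min}(H)$.
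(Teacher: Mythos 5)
Your proof is correct; note, though, that the paper itself offers no argument to compare against --- it imports the statement wholesale as Lemma 2.3 of \cite{lorenz2023minimal}. Your two-stage route (positive semidefiniteness from the Gram structure, then the upgrade to definiteness via the fact that a nonnegative random variable with zero mean vanishes almost surely, contradicting $y^{\top}My=0$ with $M=\mathbb{E}_{S\in\Omega}[SS^{\top}]\succ 0$) is precisely the standard one and matches the cited reference in substance; your observation that no finiteness of the sample space is needed is also correct and relevant here, since Assumption \ref{Ass} makes only $\Omega$, not $\overline{\Omega}$, finite. One caveat you raise should be stressed as genuinely substantive rather than cosmetic: Assumption \ref{Ass} alone does not exclude a sampling matrix in the support with $A^{\top}S=0$ (for instance $A=e_1e_1^{\top}$ and $\Omega=\{e_1,e_2\}$ with equal probabilities gives $M=\tfrac{1}{2}I\succ 0$), in which case the defining expectation \eqref{matrix-H-} contains a $0/0$ term; worse, under the convention of dropping such terms one would get $H=\tfrac{1}{2}e_1e_1^{\top}$, which is singular, so the conclusion itself would fail. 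Hence $A^{\top}S\neq 0$ (resp.\ $AT\neq 0$) on the support must be read as an implicit standing hypothesis --- exactly what is needed for $H$ and $\overline{H}$ in \eqref{matrix-H-} to be defined at all --- and under it your passage from $y^{\top}Hy=0$ to $S^{\top}y=0$ almost surely, and thence to $y^{\top}My=0$, is airtight. Your closing remark that, for finite $\Omega$, one has $H\succeq\tfrac{1}{c}M$ with $c=\max_{S\in\Omega}\|A^{\top}S\|_2^2$ is a nice bonus beyond what the lemma asserts, since it yields the quantitative bound $\lambda_{\min}(H)\geq\lambda_{\min}(M)/c$.
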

			Furthermore, we define
			\begin{equation}\label{rho}
				\rho_{z}:=1-\eta(2-\eta) \sigma_{\min}^2(\overline{H}^{\frac{1}{2}}A^{\top}), 
				\ \ \text{and} \ \ 
				\Lambda_{\min}:=\inf\limits_{S\in \Omega, A^{\top}S\neq 0} \lambda_{\min}\left(\frac{A^\top SS^\top A}{\|A^{\top}S\|_2^2}\right),
			\end{equation}
			where $\sigma_{\min}(\cdot)$ and $\lambda_{\min}(\cdot)$ denote the smallest nonzero singular value and the smallest nonzero eigenvalue of a given matrix, respectively. Since the sampling space \(\Omega\) is finite (as assumed in Assumption \ref{Ass}), it is ensured that \(\Lambda_{\min}>0\).
			
			For any $0 < \epsilon \leq 1$, we set
			\begin{equation}\label{c}
				c_{\zeta, \epsilon}:=
				\left\{\begin{array}{ll}
					(2-\zeta) \left( 1-(1+\epsilon)(1-\zeta)\right) , \;\;\,\ \text{if} \; \zeta \in (0, 1);
					\\
					\\
					(2-\zeta)\left( 1+(1-\epsilon)(\zeta-1)\right), \;\;\,\ \text{if} \; \zeta \in [1, 2),
				\end{array}
				\right.
			\end{equation}
			\begin{equation}\label{d}
				d_{\zeta, \epsilon}:=
				\left\{\begin{array}{ll}
					(2-\zeta) \left( 1+(\epsilon^{-1}+1)(1-\zeta)\right), \;\;\,\ \text{if} \; \zeta \in (0, 1);
					\\
					\\
					(2-\zeta)\left( 1+(\epsilon^{-1}-1)(\zeta-1)\right), \;\;\,\ \text{if} \; \zeta \in [1, 2),
				\end{array}
				\right.
			\end{equation}
			\begin{equation}\label{v}
				\rho_{x}:=1-c_{\zeta, \epsilon} \sigma_{\min}^2\left(H^{\frac{1}{2}}A\right),
				\ \	\text{and} \ \
				\rho:=\max\{\rho_{z}, \rho_{x}\}.
			\end{equation}
			We have the following convergence result for Algorithm \ref{ASE}.
			\begin{theorem}
				\label{x-ASEM-convergence}
				Suppose the probability spaces $(\overline{\Omega}, \overline{\mathcal{F}}, \overline{P})$ and $(\Omega, \mathcal{F}, P)$ satisfy Assumption \ref{Ass}. 
				For any given linear system $Ax=b$, let  $\{x^k\}_{k\geq0}$ be the iteration sequence generated by Algorithm \ref{ASE}. Then, if $\zeta \in (0, \frac{1}{2}]$ and $\epsilon \in (0, \frac{\zeta}{1-\zeta})$, or $\zeta \in (\frac{1}{2}, 2)$ and $\epsilon \in (0, 1]$, we have
				$$
				\mathbb{E}\left[\|x^k-A^{\dagger}b\|_2^2\right] \leq 
				\rho^{k} \left(\|x^0-A^{\dagger}b\|_2^2+\frac{d_{\zeta, \epsilon} \lambda_{\max}(H)}{\Lambda_{\min}\max\left\{\left|1-\frac{\rho_{x}}{\rho_{z}}\right|, \frac{1}{k}\right\}} \|z^0-b_{\text{Range}(A)^{\bot}}\|_2^2\right), 
				$$
				where $H$, $\rho_{z}$, $\Lambda_{\min}$, $d_{\zeta, \epsilon}$, $\rho_{x}$, and $\rho$ are given by \eqref{matrix-H-}, \eqref{rho},  \eqref{d}, and \eqref{v}, respectively.
			\end{theorem}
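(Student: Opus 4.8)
The plan is to track the two error sequences separately and then couple them through a single linear recurrence. Write $z^{*}:=b_{\operatorname{Range}(A)^{\perp}}$, so that $A^{\top}z^{*}=0$ and $b-z^{*}=AA^{\dagger}b$, and set $u^{k}:=x^{k}-A^{\dagger}b$ and $w^{k}:=z^{k}-z^{*}$. Because the updates keep $x^{k}\in\operatorname{Range}(A^{\top})$ and $z^{k}\in b+\operatorname{Range}(A)$, we have $u^{k}\in\operatorname{Range}(A^{\top})$ and $w^{k}\in\operatorname{Range}(A)$ for all $k$, which is exactly what lets us replace smallest-nonzero singular values by genuine lower bounds on quadratic forms. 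First I would establish the linear decay of the auxiliary sequence: applying the adaptive-step-size computation of \cite{zeng2023adaptive} (equivalently Lemma~\ref{z-the basic method-convergence}) to the consistent system $A^{\top}z=0$ gives the one-step bound $\mathbb{E}_{T_{k}}[\norm{w^{k+1}}^{2}]\le\rho_{z}\norm{w^{k}}^{2}$, and hence $\mathbb{E}[\norm{w^{k}}^{2}]\le\rho_{z}^{k}\norm{z^{0}-z^{*}}^{2}$; the factor $\sigma_{\min}^{2}(\overline{H}^{\frac12}A^{\top})$ appears because $w^{k}\in\operatorname{Range}(A)$ is orthogonal to $\operatorname{Null}(A^{\top})$.

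The heart of the argument is the $x$-step, where the difficulty is that the update is a randomized step for the \emph{moving} target system $Ax=b-z^{k+1}$ rather than for $Ax=b-z^{*}$. The key identity is $r^{k}:=Ax^{k}-(b-z^{k+1})=Au^{k}+w^{k+1}$. Expanding $\norm{u^{k+1}}^{2}$ and using the defining relation $\alpha_{k}\|A^{\top}S_{k}S_{k}^{\top}r^{k}\|_{2}^{2}=(2-\zeta)\|S_{k}^{\top}r^{k}\|_{2}^{2}$ of the adaptive step-size collapses the quadratic term and produces
\[
\norm{u^{k+1}}^{2}=\norm{u^{k}}^{2}-\zeta\alpha_{k}\|S_{k}^{\top}r^{k}\|_{2}^{2}+2\alpha_{k}\langle S_{k}^{\top}r^{k},\,S_{k}^{\top}w^{k+1}\rangle.
\]
Substituting $S_{k}^{\top}r^{k}=S_{k}^{\top}Au^{k}+S_{k}^{\top}w^{k+1}$ isolates a genuine contraction term in $\|S_{k}^{\top}Au^{k}\|_{2}^{2}$, a harmful cross term proportional to $(1-\zeta)$, and a perturbation term in $\|S_{k}^{\top}w^{k+1}\|_{2}^{2}$. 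I would then tame the cross term with Young's inequality carrying the free parameter $\epsilon$; since its coefficient changes sign at $\zeta=1$, the cases $\zeta\in(0,1)$ and $\zeta\in[1,2)$ require opposite orientations of Young's inequality, which is precisely the source of the piecewise definitions \eqref{c} and \eqref{d}. The admissibility ranges of $\epsilon$ stated in the theorem are exactly the conditions guaranteeing that the resulting coefficient $c_{\zeta,\epsilon}$ of the contraction term remains positive.

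Next I would convert the $S_{k}$-dependent factors into spectral constants using two one-sided bounds on $\alpha_{k}$. From $\|A^{\top}S_{k}S_{k}^{\top}r^{k}\|_{2}^{2}\le\|A^{\top}S_{k}\|_{2}^{2}\|S_{k}^{\top}r^{k}\|_{2}^{2}$ we get the lower bound $\alpha_{k}\ge(2-\zeta)/\|A^{\top}S_{k}\|_{2}^{2}$, which I apply to the (negative) contraction term; and using $r^{k}\in\operatorname{Range}(A)$ together with the definition of $\Lambda_{\min}$ gives $L_{\text{adap}}^{(k)}\le 1/(\Lambda_{\min}\|A^{\top}S_{k}\|_{2}^{2})$, hence an upper bound on $\alpha_{k}$ that I apply to the (positive) perturbation term. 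After these substitutions every random factor has the form $\|S_{k}^{\top}\cdot\|_{2}^{2}/\|A^{\top}S_{k}\|_{2}^{2}$, so taking $\mathbb{E}_{S_{k}}$ turns them into quadratic forms in $H$: the contraction term becomes $c_{\zeta,\epsilon}\|H^{\frac12}Au^{k}\|_{2}^{2}\ge c_{\zeta,\epsilon}\sigma_{\min}^{2}(H^{\frac12}A)\norm{u^{k}}^{2}$ (using $u^{k}\in\operatorname{Range}(A^{\top})$ and $\operatorname{Null}(A^{\top}HA)=\operatorname{Null}(A)$, with $H$ positive definite by Lemma~\ref{lemma_}), while the perturbation term is bounded by $\tfrac{d_{\zeta,\epsilon}}{\Lambda_{\min}}\|H^{\frac12}w^{k+1}\|_{2}^{2}\le\tfrac{d_{\zeta,\epsilon}\lambda_{\max}(H)}{\Lambda_{\min}}\norm{w^{k+1}}^{2}$. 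This yields the coupled one-step inequality $\mathbb{E}[\norm{u^{k+1}}^{2}]\le\rho_{x}\,\mathbb{E}[\norm{u^{k}}^{2}]+D\,\mathbb{E}[\norm{w^{k+1}}^{2}]$ with $D=d_{\zeta,\epsilon}\lambda_{\max}(H)/\Lambda_{\min}$.

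Finally I would close the recursion. Inserting $\mathbb{E}[\norm{w^{k+1}}^{2}]\le\rho_{z}^{k+1}\norm{z^{0}-z^{*}}^{2}$ and unrolling gives $\mathbb{E}[\norm{u^{k}}^{2}]\le\rho_{x}^{k}\norm{u^{0}}^{2}+D\norm{z^{0}-z^{*}}^{2}\sum_{j=0}^{k-1}\rho_{x}^{k-1-j}\rho_{z}^{j+1}$. The geometric sum equals $\rho_{z}(\rho_{x}^{k}-\rho_{z}^{k})/(\rho_{x}-\rho_{z})$ when $\rho_{x}\ne\rho_{z}$ and $k\rho^{k}$ when $\rho_{x}=\rho_{z}$; both are bounded by $\rho^{k}/\max\{|1-\rho_{x}/\rho_{z}|,1/k\}$ via the elementary inequality $|\rho_{x}^{k}-\rho_{z}^{k}|\le\max\{\rho_{x},\rho_{z}\}^{k}=\rho^{k}$, which unifies the two cases and produces exactly the stated bound after bounding $\rho_{x}^{k}\le\rho^{k}$. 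I expect the main obstacle to lie in the second and third steps: correctly isolating the cross term created by the moving target, and then choosing the two different one-sided bounds on $\alpha_{k}$ so that the contraction and perturbation pieces collapse simultaneously to the intended constants $c_{\zeta,\epsilon}$ and $d_{\zeta,\epsilon}$. The piecewise Young's inequality bookkeeping across $\zeta<1$ and $\zeta\ge1$ is where sign errors are easiest to make and should be handled with care.
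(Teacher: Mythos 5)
Your proposal is correct and follows essentially the same route as the paper's proof: the same auxiliary lemma for the $z$-sequence, the same step-size collapse and $\epsilon$-parametrized Young's inequality (your cross-term version is algebraically equivalent to the paper's bounds on $\|S_k^{\top}(Ax^k-(b-z^{k+1}))\|_2^2$ and yields the identical $c_{\zeta,\epsilon}$ and $d_{\zeta,\epsilon}$), the same two one-sided bounds on $L_{\text{adap}}^{(k)}$, and the same coupled recursion with the geometric-sum estimate. The only step you gloss over is the event $S_k^{\top}(Ax^k-(b-z^{k+1}))=0$, on which the update is trivial and the per-sample inequality contains no contraction term, so ``taking $\mathbb{E}_{S_k}$'' does not immediately produce the full quadratic form in $H$; the paper closes this by observing that on this event $S_k^{\top}A(x^k-A^{\dagger}b)=S_k^{\top}(b_{\text{Range}(A)^{\bot}}-z^{k+1})$, which lets the missing contraction piece be folded into the perturbation term (using $c_{\zeta,\epsilon}\leq d_{\zeta,\epsilon}$ and $\Lambda_{\min}\leq 1$).
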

			
			Based on Theorem \ref{x-ASEM-convergence}, we have the following corollary.
			
			\begin{corollary} \label{corollary}
				Under the same conditions of Theorem \ref{x-ASEM-convergence}, the iteration sequence $\{x^k\}_{k \geq 0}$  generated by Algorithm \ref{ASE} with relaxation parameters $\eta=\zeta=1$ satisfies
				$$
				\begin{aligned}
					\mathbb{E}[\|x^k-A^{\dagger}b\|_2^2] \leq  
					\rho^k \left(\|x^0-A^{\dagger}b\|_2^2+\frac{ \lambda_{\max}(H)}{\Lambda_{\min} \Gamma_k } \|z^0-b_{\text{Range}(A)^{\bot}}\|_2^2\right),
				\end{aligned}
				$$
				where  $\overline{H}$, $H$, $\Lambda_{\min}$ are defined as  \eqref{matrix-H-} and \eqref{rho}, respectively,  $
				\Gamma_k:=\max\{|\sigma_{\min}^2(\overline{H}^{\frac{1}{2}}A^{\top})-\sigma_{\min}^2(H^{\frac{1}{2}}A)|\big/
				(1-\sigma_{\min}^2(\overline{H}^{\frac{1}{2}}A^{\top})), 1/k\}
				$, and 
				$
				\rho=1-\min\{\sigma_{\min}^2(\overline{H}^{\frac{1}{2}}A^{\top}), \sigma_{\min}^2(H^{\frac{1}{2}}A)\}
				$.
			\end{corollary}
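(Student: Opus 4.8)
The plan is to decouple the analysis into the linear convergence of the auxiliary sequence $\{z^k\}$ toward $\bar z:=b_{\text{Range}(A)^{\bot}}$ and a \emph{perturbed} one-step contraction for $\{x^k\}$ toward $x^*:=A^{\dagger}b$, in which the current error of $z$ appears as a forcing term, and then to splice the two estimates through a scalar recursion. Throughout I use the invariants $z^k\in b+\text{Range}(A)$ and $x^k\in\text{Range}(A^{\top})$, preserved by \eqref{framework_a} from the initialization, together with $A^{\top}\bar z=0$ and $Ax^*=b-\bar z$.

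First I would show $\mathbb{E}[\|z^k-\bar z\|_2^2]\le \rho_z^{\,k}\|z^0-\bar z\|_2^2$. Since $A^{\top}\bar z=0$, expanding $\|z^{k+1}-\bar z\|_2^2$ with the update and the adaptive $\mu_k$ of \eqref{mu} collapses the linear and quadratic terms into a single decrease $-\eta(2-\eta)\|T_k^{\top}A^{\top}z^k\|_2^4/\|AT_kT_k^{\top}A^{\top}z^k\|_2^2$. Bounding the denominator by $\|AT_k\|_2^2\,\|T_k^{\top}A^{\top}z^k\|_2^2$ and taking expectation turns this into $\mathbb{E}[\|T_k^{\top}A^{\top}z^k\|_2^2/\|AT_k\|_2^2]=\|\overline{H}^{1/2}A^{\top}(z^k-\bar z)\|_2^2$ with $\overline{H}$ from \eqref{matrix-H-}; since $z^k-\bar z\in\text{Range}(A)=\text{Null}(A^{\top})^{\perp}$ and $\overline{H}\succ0$ (Lemma \ref{lemma_}), this is at least $\sigma_{\min}^2(\overline{H}^{1/2}A^{\top})\|z^k-\bar z\|_2^2$, producing the factor $\rho_z$ of \eqref{rho} (Lemma \ref{lemma-non} makes $\mu_k$ well-defined).

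The heart of the argument is the one-step bound for $x$. Writing $w^k:=Ax^k-(b-z^{k+1})=A(x^k-x^*)+(z^{k+1}-\bar z)$ and noting that $w^k\in\text{Range}(A)$, I would expand $\|x^{k+1}-x^*\|_2^2$ using \eqref{framework_a} and substitute the adaptive $\alpha_k$ of \eqref{alpha_basic}; the change reduces to $(2-\zeta)L_{\text{adap}}^{(k)}\big[-\zeta\|S_k^{\top}w^k\|_2^2+2\langle S_k^{\top}w^k,\,S_k^{\top}(z^{k+1}-\bar z)\rangle\big]$. Substituting $S_k^{\top}w^k=S_k^{\top}A(x^k-x^*)+S_k^{\top}(z^{k+1}-\bar z)$ and applying Young's inequality with parameter $\epsilon$ to the cross term produces exactly the constants $c_{\zeta,\epsilon}$ and $d_{\zeta,\epsilon}$ of \eqref{c} and \eqref{d}, yielding
$$
\|x^{k+1}-x^*\|_2^2\le \|x^k-x^*\|_2^2-c_{\zeta,\epsilon}\,L_{\text{adap}}^{(k)}\,\|S_k^{\top}A(x^k-x^*)\|_2^2+d_{\zeta,\epsilon}\,L_{\text{adap}}^{(k)}\,\|S_k^{\top}(z^{k+1}-\bar z)\|_2^2 .
$$
The stated ranges of $(\zeta,\epsilon)$ are precisely what force $c_{\zeta,\epsilon}>0$, so the middle term is a genuine decrease. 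I expect the \textbf{main obstacle} to be the two-sided control of $L_{\text{adap}}^{(k)}$: the lower bound $L_{\text{adap}}^{(k)}\ge 1/\|A^{\top}S_k\|_2^2$ is immediate, but the upper bound needs the observation that $w^k\in\text{Range}(A)$ forces $S_k^{\top}w^k\in\text{Range}(S_k^{\top}A)$, whence the definition of $\Lambda_{\min}$ in \eqref{rho} gives $\|A^{\top}S_kS_k^{\top}w^k\|_2^2\ge\Lambda_{\min}\|A^{\top}S_k\|_2^2\|S_k^{\top}w^k\|_2^2$, i.e. $L_{\text{adap}}^{(k)}\le 1/(\Lambda_{\min}\|A^{\top}S_k\|_2^2)$. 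Taking $\mathbb{E}_{S_k}$ conditionally, using $\mathbb{E}[S_kS_k^{\top}/\|A^{\top}S_k\|_2^2]=H$ with $x^k-x^*\in\text{Range}(A^{\top})$ for the decrease and $\Lambda_{\min}$ together with $\lambda_{\max}(H)$ for the error, delivers $\mathbb{E}_{S_k}[\|x^{k+1}-x^*\|_2^2]\le \rho_x\|x^k-x^*\|_2^2+\tfrac{d_{\zeta,\epsilon}\lambda_{\max}(H)}{\Lambda_{\min}}\|z^{k+1}-\bar z\|_2^2$ with $\rho_x$ as in \eqref{v}.

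Finally, taking total expectations and inserting the $z$-estimate gives the scalar recursion $\mathbb{E}[\|x^{k+1}-x^*\|_2^2]\le\rho_x\,\mathbb{E}[\|x^k-x^*\|_2^2]+\tfrac{d_{\zeta,\epsilon}\lambda_{\max}(H)}{\Lambda_{\min}}\rho_z^{\,k+1}\|z^0-\bar z\|_2^2$. Unrolling yields $\rho_x^{\,k}\|x^0-x^*\|_2^2$ plus the convolution $\sum_{j=0}^{k-1}\rho_x^{\,k-1-j}\rho_z^{\,j+1}$, which I would bound two ways: termwise by $k\rho^k$ (each factor at most $\rho=\max\{\rho_x,\rho_z\}$), and, when $\rho_x\neq\rho_z$, by the geometric estimate $\rho^k/|1-\rho_x/\rho_z|$. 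Combining these gives $\rho^k/\max\{|1-\rho_x/\rho_z|,1/k\}$, and bounding $\rho_x^{\,k}\le\rho^k$ produces exactly the claimed estimate. Everything outside the $\text{Range}(A)$-membership of $w^k$ (which unlocks the $\Lambda_{\min}$ upper bound) and the Young split is routine bookkeeping.
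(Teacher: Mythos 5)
Your route is the paper's own. Corollary \ref{corollary} is simply Theorem \ref{x-ASEM-convergence} specialized to $\eta=\zeta=1$ (so that $c_{\zeta,\epsilon}=d_{\zeta,\epsilon}=1$, $\rho_z=1-\sigma_{\min}^2(\overline{H}^{\frac{1}{2}}A^{\top})$, $\rho_x=1-\sigma_{\min}^2(H^{\frac{1}{2}}A)$, and $\Gamma_k=\max\{|1-\rho_x/\rho_z|,1/k\}$), and what you reconstruct is precisely the appendix proof of that theorem: the $z$-contraction lemma, the perturbed one-step estimate for $x$ in which the Young split with parameter $\epsilon$ generates exactly $c_{\zeta,\epsilon}$ and $d_{\zeta,\epsilon}$, the two-sided control $1/\|A^{\top}S_k\|_2^2\le L_{\text{adap}}^{(k)}\le 1/(\Lambda_{\min}\|A^{\top}S_k\|_2^2)$ (the upper bound indeed resting on $w^k\in\text{Range}(A)$), and the convolution bound $\sum_i\rho_z^{k-i}\rho_x^i\le\rho^k/\max\{|1-\rho_x/\rho_z|,1/k\}$. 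All of that algebra checks out.

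However, one step fails as literally written, and it is exactly the place where the paper spends its real effort. Your one-step inequality $\|x^{k+1}-A^{\dagger}b\|_2^2\le\|x^k-A^{\dagger}b\|_2^2-c_{\zeta,\epsilon}L_{\text{adap}}^{(k)}\|S_k^{\top}A(x^k-A^{\dagger}b)\|_2^2+d_{\zeta,\epsilon}L_{\text{adap}}^{(k)}\|S_k^{\top}(z^{k+1}-b_{\text{Range}(A)^{\bot}})\|_2^2$ holds only on the event $S_k\in\mathcal{Q}_k$ of \eqref{Q}; when $S_k^{\top}(Ax^k-(b-z^{k+1}))=0$ the adaptive step-size is set to zero, $x^{k+1}=x^k$, and the decrease term is absent. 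You therefore cannot convert the decrease into $\sigma_{\min}^2(H^{\frac{1}{2}}A)\|x^k-A^{\dagger}b\|_2^2$ by invoking $\mathbb{E}[S_kS_k^{\top}/\|A^{\top}S_k\|_2^2]=H$: that identity is an expectation over all of $\Omega$, whereas your decrease carries an implicit indicator of $\mathcal{Q}_k$, and on $\mathcal{Q}_k^c$ the quantity $\|S_k^{\top}A(x^k-A^{\dagger}b)\|_2^2$ need not vanish (unlike the analogous point in the $z$-step, where $T_k^{\top}A^{\top}z^k=0$ on $\overline{\mathcal{Q}}_k^c$ makes the restriction harmless). The paper closes this hole by noting that on $\mathcal{Q}_k^c$ one has $S_k^{\top}A(x^k-A^{\dagger}b)=S_k^{\top}(b_{\text{Range}(A)^{\bot}}-z^{k+1})$, so the missing decrease is itself controlled by the $z$-error, and it is then absorbed into the forcing term using $c_{\zeta,\epsilon}\le d_{\zeta,\epsilon}$ and $\Lambda_{\min}\le1$; see \eqref{S_Q}--\eqref{x_Omega}. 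The repair needs nothing beyond your own decomposition $S_k^{\top}w^k=S_k^{\top}A(x^k-A^{\dagger}b)+S_k^{\top}(z^{k+1}-b_{\text{Range}(A)^{\bot}})$, but it is a necessary observation rather than routine bookkeeping: without it, the conditional bound $\mathbb{E}_{S_k}[\|x^{k+1}-A^{\dagger}b\|_2^2]\le\rho_x\|x^k-A^{\dagger}b\|_2^2+\frac{d_{\zeta,\epsilon}\lambda_{\max}(H)}{\Lambda_{\min}}\|z^{k+1}-b_{\text{Range}(A)^{\bot}}\|_2^2$, which is otherwise correct and is what your recursion and the final statement of the corollary rest on, does not follow.
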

			
			As Algorithm \ref{ASE} can recover many existing methods.  We here discuss the relationship between Corollary \ref{corollary} and the convergence results of these methods.

			\begin{remark}[REK] \label{rek-iteration}
				When $\overline{\Omega}=\left\{ e_j/\Vert A_{:, j} \Vert_2 \right\}_{j=1}^n$ and $T_k=e_j/\Vert A_{:, j} \Vert_2$ selected with probability $\Vert A_{:,j} \Vert_2^2/\Vert A \Vert_F^2$, and $\Omega = \left\{ e_i/\Vert A_{i, :} \Vert_2 \right\}_{i=1}^m $ and $S_k=e_i/\Vert A_{i, :} \Vert_2$ selected with probability $\Vert A_{i, :} \Vert_2^2/\Vert A \Vert_F^2$, Algorithm \ref{ASE} with $\eta=\zeta=1$ reduces to the REK method.
				Now that $\overline{H}=I/\Vert A \Vert_F^2$, $H=I/\Vert A \Vert_F^2$, and $\Lambda_{\min}=1$, it then follows from Corollary \ref{corollary} that
				$$
				\mathop{\mathbb{E}} [\| x^k-A^\dagger b \|_2^2 ] \leq \left(1-\frac{\sigma_{\min}^{2} (A)}{\Vert A \Vert_F^2 }\right)^k \left(\| x^0-A^\dagger b \|_2^2+k \frac{\|z^0-b_{\text{Range}(A)^{\bot}}\|_2^2}{\Vert A \Vert_F^2} \right),
				$$
				which coincides with the tight convergence result of the REK method obtain in \cite{Du19}.
			\end{remark}
			
			\begin{remark}[REABK]
				\label{remark_sampling}
				Under the conditions in Remark \ref{AREABK}, when 
				the sampling matrices $T_k= I_{:, \mathcal{J}_{j}}/\Vert A_{:, \mathcal{J}_{j}} \Vert_F$  and $S_k= I_{:, \mathcal{I}_{i}}/\Vert A_{\mathcal{I}_{i}, :} \Vert_F$ are selected with probability $\Vert A_{:, \mathcal{J}_{j}} \Vert_F^2/\Vert A \Vert_F^2$ and $\Vert A_{\mathcal{I}_{i}, :} \Vert_F^2/\Vert A \Vert_F^2$ respectively, the authors \cite[Theorem 3.1]{wu2022extended} showed that for the REABK method with adaptive step-sizes, it holds that
				\begin{equation}\label{ERMR_con}
					\mathbb{E}[\|x^k-A^{\dagger}b\|_2^2]
					\leq
					\rho_3^k \left(\|x^0-A^{\dagger}b\|_2^2+\frac{\ell}{|\rho_1-\rho_2| \Gamma_{\min}^{\mathcal{I}} \|A\|_F^2} \|z^0-b_{\text{Range}(A)^{\bot}}\|_2^2\right),
				\end{equation}
				where $\rho_{1}:=1-\frac{1}{\Gamma_{\max}^{\mathcal{I}}}\frac{\sigma_{\min}^2(A)}{\|A\|_F^2}$, $\rho_{2}:=1-\frac{1}{\Gamma_{\max}^{\mathcal{J}}}\frac{\sigma_{\min}^2(A)}{\|A\|_F^2}$, $\rho_{3}:=\max\{\rho_{1}, \rho_{2}\}$, $\Gamma_{\min}^{\mathcal{I}}:=\mathop{\min}\limits_{i \in [\ell]}\left\{\frac{\sigma_{\min}^2(A_{\mathcal{I}_i, :})}{\Vert A_{\mathcal{I}_i, :} \Vert_F^2}\right\}$, $\Gamma_{\max}^{\mathcal{I}}:=\mathop{\max}\limits_{i \in [\ell]} \left\{\frac{\sigma_{\max}^2(A_{\mathcal{I}_i, :})}{\Vert A_{\mathcal{I}_i, :} \Vert_F^2}\right\}$, $\Gamma_{\max}^{\mathcal{J}}:=\mathop{\max}\limits_{j \in [t]} \left\{\frac{\sigma_{\max}^2(A_{:, \mathcal{J}_j})}{\Vert A_{:, \mathcal{J}_j} \Vert_F^2}\right\}$, and $\ell$ is the number of elements in $ \Omega $.
				It is necessary to assume that $\rho_{1}\neq\rho_{2}$ to ensure well-definedness of the convergence result \eqref{ERMR_con}.

				Meanwhile, the parameters in Corollary \ref{corollary} are specified as 
				$\Lambda_{\min}=\mathop{\min}\limits_{i \in [\ell]}\left\{\frac{\sigma_{\min}^2(A_{\mathcal{I}_i, :})}{\Vert A_{\mathcal{I}_i, :} \Vert_2^2}\right\}$,
				$$
				\rho_{z}=1-\sigma_{\min}^2\left(\overline{H}^{\frac{1}{2}}A^{\top}\right)=1-\frac{1}{\|A\|_F^2} \sigma_{\min}^2\left(\sum\limits_{j \in [t]} \frac{\Vert A_{:, \mathcal{J}_j} \Vert_F}{\Vert A_{:, \mathcal{J}_j} \Vert_2} I_{:, \mathcal{J}_j} A_{:, \mathcal{J}_j}^{\top}\right) \leq 1-\frac{\sigma_{\min}^2 (A)}{\Gamma_{\max}^{\mathcal{J}} \Vert A \Vert_F^2}=\rho_2,
				$$
				$$
				\rho_{x}=1-\sigma_{\min}^2\left(H^{\frac{1}{2}}A^{\top}\right)=1-\frac{1}{\|A\|_F^2} \sigma_{\min}^2\left(\sum\limits_{i \in [\ell]} \frac{\Vert A_{\mathcal{I}_i, :} \Vert_F}{\Vert A_{\mathcal{I}_i, :} \Vert_2} I_{:, \mathcal{I}_i} A_{\mathcal{I}_i, :}\right) \leq 1-\frac{\sigma_{\min}^2 (A)}{\Gamma_{\max}^{\mathcal{I}} \Vert A \Vert_F^2}=\rho_1,
				$$
				and $\rho=\max\{\rho_{z}, \rho_{x}\}$, where $\overline{H}=\frac{1}{\|A\|_F^2} \sum\limits_{j \in [t]} \frac{\Vert A_{:, \mathcal{J}_j} \Vert_F^2}{\Vert A_{:, \mathcal{J}_j} \Vert_2^2} I_{:, \mathcal{J}_j} I_{:, \mathcal{J}_j}^{\top}$ and $H=\frac{1}{\|A\|_F^2} \sum\limits_{i \in [\ell]} \frac{\Vert A_{\mathcal{I}_i, :} \Vert_F^2}{\Vert A_{\mathcal{I}_i, :} \Vert_2^2} I_{:, \mathcal{I}_i} I_{:, \mathcal{I}_i}^{\top}$.
				Let $\psi_{\max}:=\mathop{\max}\limits_{i \in [\ell]} \left\{\Vert A_{\mathcal{I}_i, :} \Vert_F^2/\Vert A_{\mathcal{I}_i, :} \Vert_2^2\right\}$, 
				Corollary \ref{corollary} then indicates that the REABK method with adaptive step-sizes 
				shares the following convergence result
				$$
				\mathbb{E}\left[\|x^k-A^{\dagger}b\|_2^2\right]
				\leq
				\rho^k \left(\|x^0-A^{\dagger}b\|_2^2+\frac{\psi_{\max}}{\mathop{\max}\{|1-\frac{\rho_x}{\rho_z}|, \frac{1}{k}\} \Lambda_{\min} \|A\|_F^2} \|z^0-b_{\text{Range}(A)^{\bot}}\|_2^2\right), 	$$
				which is well-defined for any $\rho_z, \rho_x \in (0, 1)$.
				In addition, since $\rho_{z} \leq \rho_2$ and $\rho_{x} \leq \rho_1$, it follows that $\rho=\max\{\rho_{z}, \rho_{x}\} \leq \max\{\rho_{1}, \rho_{2}\}=\rho_{3}$. This implies that our convergence factor is better than that in  \eqref{ERMR_con}.
			\end{remark}

			\section{Stochastic extended iterative method with adaptive HBM }
			\label{Section-4}
			
			This section aims to enhance the stochastic extended iterative method by incorporating adaptive HBM. Building on the framework established in \eqref{framework_a}, we propose the following adaptive HBM-based iteration scheme:
			\begin{equation} \label{AmSEIM}
				\begin{aligned}
					z^{k+1}&=z^k-\mu_k AT_kT_k^{\top}A^{\top}z^k+\omega_k(z^k-z^{k-1}), \\
					x^{k+1}&=x^k-\alpha_k A^\top S_k S_k^\top(Ax^k-(b-z^{k+1}))+\beta_k(x^k-x^{k-1}),
				\end{aligned}
			\end{equation}
			where $\mu_k$ and $\alpha_k$ are the step-sizes, and $\omega_k$ and $\beta_k$ are the momentum parameters.
			We choose $z^0 \in b+\text{Range}(A)$  and $x^0 \in \text{Range}(A^{\top})$ as initial points, and  then employ Algorithm \ref{ASE} with the parameters 
			$\eta=\zeta=1$ to generate $z^1$ and $x^1$. We now focus on adaptive strategies for updating the parameters $\mu_k,\omega_k$ and $\alpha_k,\beta_k$, respectively.
			
			\subsection{Updating the parameters $\mu_k$ and $\omega_k$ } 
			\label{SHBM_z}
			
			The adaptive strategy for updating the parameters $\mu_k$ and $\omega_k$ is  inspired 
			by the recent work \cite{zeng2023adaptive}. For convenience, we define $ p^k:=\nabla g_{T_k} (z^k)=AT_kT_k^{\top}A^{\top}z^k$. If $z^k-z^{k-1}$ is parallel to $p^k$, i.e. $\| p^k \|_2^2 \| z^k-z^{k-1} \|_2^2 - \langle p^k, z^k-z^{k-1} \rangle^2=0$, we choose $\mu_k$ as \eqref{mu} with $\eta=1$ and set $\omega_k=0$. 
			Otherwise, we expect to choose $\mu_k$ and $\omega_k$ that minimize the error $\|z^{k+1}-b_{\text{Range}(A)^{\bot}}\|_2$. 
			In this case, the optimal values of $\mu_k$ and $\omega_k$ can be expressed as
			\begin{equation}\label{mu_omega_HB1}
				\left\{
				\begin{array}{ll}
					\mu_k^{\operatorname{opt}}=\frac{\| z^k-z^{k-1} \|_2^2 \|T_k^{\top}A^{\top}z^k\|_2^2}{\| p^k \|_2^2 \| z^k-z^{k-1} \|_2^2 - \langle p^k, z^k-z^{k-1} \rangle^2}, \\[0.5cm]
					\omega_k^{\operatorname{opt}}=\frac{\langle p^k, z^k-z^{k-1} \rangle \|T_k^{\top}A^{\top}z^k\|_2^2}{\| p^k \|_2^2 \| z^k-z^{k-1} \|_2^2 - \langle p^k, z^k-z^{k-1} \rangle^2}.
				\end{array}
				\right.
			\end{equation}
			One may refer to  \cite[Section 4]{zeng2023adaptive} for more details.
			The procedure of updating the vector $z^{k+1}$ is formally described in Stage I.
				
				\begin{table}[htpb]
					\centering
					{
						\begin{tabular}{  |l|  }
							\hline
							\qquad \qquad \qquad \qquad \qquad \qquad \quad \qquad \textbf{Stage I} \qquad \qquad \qquad \qquad \qquad \qquad \quad  \qquad\\
							1: Randomly select a sampling matrix $T_{k}\in \overline{\Omega}$. \\
							2: Set $p^k=AT_kT_k^{\top}A^{\top}z^k$.\\
							3: If $\| p^k \|_2^2 \| z^k-z^{k-1} \|_2^2 - \langle p^k, z^k-z^{k-1} \rangle^2 \neq 0$ \\
							
							\qquad\qquad Compute $\mu_k=\mu_k^{\operatorname{opt}}$ and $\omega_k=\omega_k^{\operatorname{opt}}$ by \eqref{mu_omega_HB1}. \\
							
							\qquad	Otherwise \\
							
							\qquad\qquad Compute $\mu_k$ by \eqref{mu} with $\eta=1$ and set $\omega_k=0$. \\
							
							\quad \, End if \\
							4: Update $z^{k+1}=z^k-\mu_k p^k+\omega_k(z^k-z^{k-1})$. \\
							\hline
						\end{tabular}
					}
				\end{table}
				
				\subsection{Updating the parameters $\alpha_k$ and $\beta_k$ } 
			Similarly, we define $q^k:=\nabla_x f_{S_k}(x^k,z^{k+1})=A^{\top}S_k S_k^{\top}(Ax^k-(b-z^{k+1}))$. If $x^k-x^{k-1}$ and $q^k$ are linearly dependent, i.e. $\| q^k \|_2^2 \| x^k-x^{k-1} \|_2^2 - \langle q^k, x^k-x^{k-1} \rangle^2=0$, we determine $\alpha_k$ using \eqref{alpha_basic} with $\zeta=1$ and set $\beta_k=0$. We mainly discuss the scenario where  $x^k-x^{k-1}$ and $q^k$ are linearly independent.
			
			Ideally, one might want to determine $\alpha_k$ and $\beta_k$ such that  $\|x^{k+1}-A^{\dagger}b\|_2$ is minimized.
			However, this may be unattainable in practice as the linear system is not necessarily consistent.
			Note that $A^{\dagger}(b-z^{k+1})$ can be regarded as an estimation of $A^{\dagger}b$ as $z^{k+1}$ approximates $b_{\text{Range}(A)^{\bot}}$ (See Lemma \ref{z-ASHBM-convergence}). 
			Consequently, we shift to considering choosing $\alpha_k$ and $\beta_k$ such that  $\|x^{k+1}-A^{\dagger}(b-z^{k+1})\|_2$ is minimized, i.e.
			
			\begin{equation}\label{opt-prob}
				\begin{aligned}
					\min\limits_{ \alpha,\beta\in\mathbb{R}}& \ \ \|x-A^{\dagger}(b-z^{k+1})\|_2^2\\
					\text{subject to}& \ \ x=x^{k}-\alpha A^{\top}S_kS_k^{\top}(Ax^k-(b-z^{k+1}))+\beta(x^{k}-x^{k-1}).
				\end{aligned}
			\end{equation}
			Let $\mathcal{H}_k:=x^k+\text{Span}\{q^k, x^k-x^{k-1}\}$ denote the feasible set of the problem above. We present an intuitive geometric explanation of this strategy in Figure \ref{GI1}.
			The minimizers of \eqref{opt-prob} can be expressed as
			\begin{equation}\label{Adap}
				\left\{
				\begin{array}{ll}
					\alpha_k^{\operatorname{opt}}=\frac{\Vert u^k \Vert_2^2 \Vert x^k-x^{k-1} \Vert_2^2-\langle q^k, x^k-x^{k-1}\rangle \langle x^k-x^{k-1}, x^k-A^{\dagger}(b-z^{k+1}) \rangle}{\Vert q^k \Vert_2^2 \Vert x^k-x^{k-1} \Vert_2^2-\langle q^k,  x^k-x^{k-1} \rangle^2}, \\[0.5cm]
					\beta_k^{\operatorname{opt}}=\frac{\Vert u^k \Vert_2^2 \langle q^k, x^k-x^{k-1} \rangle- \Vert q^k \Vert_2^2 \langle x^k-x^{k-1}, x^k-A^{\dagger}(b-z^{k+1}) \rangle}{\Vert q^k \Vert_2^2 \Vert x^k-x^{k-1} \Vert_2^2-\left\langle q^k,  x^k-x^{k-1} \right \rangle^2},
				\end{array}
				\right.
			\end{equation}
			where  $u^k:=S_k^{\top}(Ax^k-(b-z^{k+1}))$.
			While the term $\langle x^k-x^{k-1}, x^k-A^{\dagger}(b-z^{k+1}) \rangle$ 
			seems to still require the knowledge of the unknown vector $A^{\dagger}b$, it can be computed using intermediate variables through iteration. 
			\begin{figure}[hptb]
				\centering
				\begin{tikzpicture}
					\draw (0,0)--(5,0)--(6,2)--(1,2)--(0,0);
					
					\filldraw (1.5,0.7) circle [radius=1pt]
					(4,3.5) circle [radius=1pt]
					(4,1.2) circle [radius=1pt];
					\draw (1.5,0.4) node {$x^k$};
					\draw (5.2,3.5) node {$A^{\dagger}(b-z^{k+1})$};
					\draw (4.45,1.15) node {$x^{k+1}$};
					\draw (4.8,0.3) node {$\mathcal{H}_k$};
					\draw [dashed,-stealth] (4,3.5) -- (4,1.25);
					\draw [-stealth] (1.5,0.7) -- (3.95,1.19);
				\end{tikzpicture}
				\caption{A geometric interpretation of our design. The next iterate $x^{k+1}$ arises such that $x^{k+1}$ is the orthogonal projection of $A^{\dagger}(b-z^{k+1})$ onto the affine set $\mathcal{H}_k=x^k+\text{Span}\{q^k, x^k-x^{k-1}\}$.}
				\label{GI1}
			\end{figure}
			
			Let  $h^1:=-\alpha_0 S_0 S_0^{\top}\left(Ax^0-(b-z^1)\right)$, and define $h^i$ ($i \geq 2$) as
			\begin{equation}
				\label{h}
				\begin{aligned}
					h^i:=-\alpha_{i-1} S_{i-1}u^{i-1}+\beta_{i-1} h^{i-1},
				\end{aligned}
			\end{equation}
			so that $x^i-x^{i-1}=A^{\top} h^{i}$ for all $i \geq 1$. In addition, according to the design of the sequence $\{x^{k}\}_{k \geq 0}$, it can be verified that $\langle x^i-x^{i-1}, x^i-A^{\dagger}(b-z^i) \rangle=0$ for all $i \geq 1$. Hence, we can get
			
			$$
			\begin{aligned}
				\langle x^k-x^{k-1}, x^k-A^{\dagger}(b-z^{k+1}) \rangle
				&=\langle x^k-x^{k-1}, A^{\dagger}(z^{k+1}-z^k) \rangle\\
				&=\langle h^{k}, AA^{\dagger}(z^{k+1}-z^k) \rangle
				=\langle h^{k}, z^{k+1}-z^k \rangle,
			\end{aligned}
			$$
			where the last equality follows from the fact that $z^{k+1}-z^k \in \text{Range}(A)$. As a result, \eqref{Adap} can be computed by
			\begin{equation}\label{Adap1}
				\left\{
				\begin{array}{ll}
					\alpha_k^{\operatorname{opt}}=\frac{\Vert u^k \Vert_2^2 \Vert x^k-x^{k-1} \Vert_2^2-\langle q^k, x^k-x^{k-1}\rangle \langle h^{k}, z^{k+1}-z^k \rangle}{\Vert q^k \Vert_2^2 \Vert x^k-x^{k-1} \Vert_2^2-\langle q^k,  x^k-x^{k-1} \rangle^2}, \\[0.5cm]
					\beta_k^{\operatorname{opt}}=\frac{\Vert u^k \Vert_2^2 \langle q^k, x^k-x^{k-1} \rangle- \Vert q^k \Vert_2^2 \langle h^{k}, z^{k+1}-z^k \rangle}{\Vert q^k \Vert_2^2 \Vert x^k-x^{k-1} \Vert_2^2-\left\langle q^k,  x^k-x^{k-1} \right \rangle^2}.
				\end{array}
				\right.
			\end{equation}
			
			The procedure of updating the iterate $x^{k+1}$ is formally described in Stage II. Now, we have already constructed the  stochastic extended iterative method with HBM described in Algorithm \ref{ASEHBM}.
				
				\begin{table}[htpb]
					\centering
					{
						\begin{tabular}{  |l|  }
							\hline
							\qquad \qquad \qquad \qquad \qquad \qquad \qquad \textbf{Stage II}  \qquad \qquad \qquad \qquad \qquad \qquad \qquad\\
							1: Randomly select a sampling matrix $S_{k}\in \Omega$. \\
							2:  Set $u^k=S_k^{\top}(Ax^k-(b-z^{k+1}))$ and $q^k=A^{\top}S_k u^k$.\\
							3:
							
							If \text{dim}($\Pi_k$)=2 \\
							
							\;\;\; \qquad Set $\alpha_k=\alpha_k^{opt}$ and $\beta_k=\beta_k^{opt}$. \\
							
							\;\;\; Otherwise \\
							\;\;\; \qquad Compute $\alpha_k$ in \eqref{alpha_basic} with $\zeta=1$ and set $\beta_{k}=0$. \\
							
							\;\;\; End if \\
							4: Update
							\qquad $x^{k+1}=x^k-\alpha_k q^k+\beta_k (x^k-x^{k-1})$, \\
							\qquad  \qquad \qquad \;\; $h^{k+1}=-\alpha_k S_k u^k+\beta_k h^{k}$. \\
							\hline
						\end{tabular}
					}
				\end{table}
				
				\begin{algorithm}[htpb]
					\renewcommand{\thealgorithm}{2}
					\caption{ Stochastic extended iterative method with adaptive HBM} \label{ASEHBM}
					\begin{algorithmic}
						\Require
						$A\in \mathbb{R}^{m\times n}$, $b\in \mathbb{R}^m$, probability spaces $(\overline{\Omega}, \overline{\mathcal{F}}, \overline{P})$ and $(\Omega, \mathcal{F}, P)$, $k=1$, and initial points $z^0\in b+\text{Range}(A)$ and $x^0\in \text{Range}(A^{\top})$.
						\begin{enumerate}
							\item[1:] Update $z^1, x^1$ by Algorithm \ref{ASE} with $\eta=\zeta=1$, and set $h^1=-\alpha_0 S_0 S_0^{\top}\left(Ax^0-(b-z^1)\right)$.
							\item[2:] Update $z^{k+1}$ by Stage I.
							\item[3:] Update $x^{k+1}$ and $h^{k+1}$ by Stage II.
							\item[4:] If the stopping rule is satisfied, stop and go to output. Otherwise, set $k=k+1$ and go to Step $2$.
						\end{enumerate}
						
						\Ensure
						The approximate solution $x^k$.
					\end{algorithmic}
				\end{algorithm}
				
				\subsection{Convergence analysis}
			
			We first introduce some auxiliary variables. Denote
			\begin{equation}\label{rho_z}
				\hat{\rho}_z:=1-\sigma_{\min}^2(\overline{H}^{\frac{1}{2}}A^{\top}), \quad \hat{\rho}_x:=1-\sigma_{\min}^2(H^{\frac{1}{2}}A), \quad
				\hat{\rho}:=\max\{\hat{\rho}_z, \hat{\rho}_x\},
			\end{equation}
			and
			\begin{equation}\label{gamma}
				\gamma:=\frac{\lambda_{\max}(H)}{\Lambda_{\min}}+\frac{1}{\sigma_{\min}^2(A)}. 
			\end{equation}
			We have the following convergence results for Algorithm \ref{ASEHBM}.
			\begin{theorem}\label{the3}
				Suppose the probability spaces $(\overline{\Omega}, \overline{\mathcal{F}}, \overline{P})$ and $(\Omega, \mathcal{F}, P)$ satisfy Assumption \ref{Ass}. 
				For any given linear system $Ax=b$, let  $\{x^k\}_{k\geq0}$ be the iteration sequence generated by Algorithm \ref{ASEHBM}.
				Then we have
				$$\begin{aligned}
					\mathbb{E}\left[\|x^k-A^{\dagger}b\|_2^2\right]
					\leq
					\hat{\rho}^{k} \left(\|x^0-A^{\dagger}b\|_2^2+\frac{\gamma}{\max\left\{\left|1-\frac{\hat{\rho}_{x}}{\hat{\rho}_{z}}\right|, \frac{1}{k}\right\}} \|z^0-b_{\text{Range}(A)^{\bot}}\|_2^2\right),
				\end{aligned}$$
				where $\hat{\rho}_{z}$,  $\hat{\rho}_{x}$,  $\hat{\rho}$, and $\gamma$ are given by \eqref{rho_z} and \eqref{gamma}, respectively.
			\end{theorem}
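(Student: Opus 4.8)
The plan is to reduce everything to two coupled one-step recursions — one for the auxiliary sequence $z^k$ and one for $x^k$ — exactly in the spirit of the non-momentum analysis behind Theorem~\ref{x-ASEM-convergence} and Corollary~\ref{corollary}, and then to argue that the adaptive momentum steps never degrade the per-iteration contraction. Note first that the target quantities already match the momentum-free case: $\hat\rho=\max\{\hat\rho_z,\hat\rho_x\}$ coincides with the factor in Corollary~\ref{corollary}, and the denominator $\max\{|1-\hat\rho_x/\hat\rho_z|,1/k\}$ is precisely the quantity $\Gamma_k$ appearing there (since $\sigma_{\min}^2(\overline H^{1/2}A^\top)=1-\hat\rho_z$ and $\sigma_{\min}^2(H^{1/2}A)=1-\hat\rho_x$). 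Hence the only genuinely new feature to produce is the extra summand $1/\sigma_{\min}^2(A)$ inside $\gamma$ in \eqref{gamma}.

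First I would dispose of the $z$-sequence. The Stage~I update is exactly the adaptive SHBM iteration of \cite{zeng2023adaptive} applied to $g(z)=\mathbb E_T[\tfrac12\|T^\top A^\top z\|_2^2]$ on the affine set $b+\text{Range}(A)$, whose unique minimizer there is $b_{\text{Range}(A)^\perp}$; invoking the corresponding convergence statement (Lemma~\ref{z-ASHBM-convergence}) yields $\mathbb E\|z^k-b_{\text{Range}(A)^\perp}\|_2^2\le\hat\rho_z^{\,k}\|z^0-b_{\text{Range}(A)^\perp}\|_2^2$. Next I turn to the $x$-update, the heart of the argument. Introduce the \emph{moving target} $x^\ast_{k+1}:=A^{\dagger}(b-z^{k+1})$, the minimum-norm solution of the consistent system $Ax=b-z^{k+1}$ (consistency follows as in the discussion after Lemma~\ref{lemma-non}). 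For a pure adaptive-step-size ($\zeta=1$) step $x^{k+1}_{\mathrm b}=x^k-\alpha_k A^\top S_kS_k^\top r^k$ with $r^k=Ax^k-(b-z^{k+1})$, I would expand $\|x^{k+1}_{\mathrm b}-A^{\dagger}b\|_2^2$ toward the \emph{fixed} target and observe that, using $A^{\dagger}b_{\text{Range}(A)^\perp}=0$ so that $r^k=A(x^k-A^{\dagger}b)+(z^{k+1}-b_{\text{Range}(A)^\perp})$, the descent term and the cross term recombine cleanly into
\[
\|x^{k+1}_{\mathrm b}-A^{\dagger}b\|_2^2=\|x^{k}-A^{\dagger}b\|_2^2-\alpha_k\|S_k^\top A(x^k-A^{\dagger}b)\|_2^2+\alpha_k\|S_k^\top(z^{k+1}-b_{\text{Range}(A)^\perp})\|_2^2 .
\]
Because the adaptive step satisfies $1/\|A^\top S_k\|_2^2\le\alpha_k\le 1/(\Lambda_{\min}\|A^\top S_k\|_2^2)$, taking $\mathbb E_{S_k}$ and using the definition \eqref{matrix-H-} of $H$ together with $\Lambda_{\min}$ from \eqref{rho} turns the middle term into a contraction by $\hat\rho_x=1-\sigma_{\min}^2(H^{1/2}A)$ on $\|x^k-A^{\dagger}b\|_2^2$ and bounds the last term by $\tfrac{\lambda_{\max}(H)}{\Lambda_{\min}}\|z^{k+1}-b_{\text{Range}(A)^\perp}\|_2^2$; this reproduces the momentum-free recursion with constant $\lambda_{\max}(H)/\Lambda_{\min}$.

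The momentum contribution I would handle through the projection structure built into \eqref{Adap1}: by construction $x^{k+1}$ is the orthogonal projection of $x^\ast_{k+1}$ onto $\mathcal H_k=x^k+\mathrm{Span}\{q^k,x^k-x^{k-1}\}$, and the basic iterate $x^{k+1}_{\mathrm b}$ lies in $\mathcal H_k$, so pointwise in $S_k$ we have $\|x^{k+1}-x^\ast_{k+1}\|_2\le\|x^{k+1}_{\mathrm b}-x^\ast_{k+1}\|_2$; thus the momentum step is never worse than the adaptive-step-size step \emph{measured against the moving target}. Transferring this guarantee back to the fixed target $A^{\dagger}b$ costs exactly one application of $\|x^\ast_{k+1}-A^{\dagger}b\|_2=\|A^{\dagger}(z^{k+1}-b_{\text{Range}(A)^\perp})\|_2\le\sigma_{\min}^{-1}(A)\|z^{k+1}-b_{\text{Range}(A)^\perp}\|_2$, which is what augments the error constant from $\lambda_{\max}(H)/\Lambda_{\min}$ to $\gamma=\lambda_{\max}(H)/\Lambda_{\min}+1/\sigma_{\min}^2(A)$. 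The goal is a single one-step inequality $\mathbb E[\|x^{k+1}-A^{\dagger}b\|_2^2\mid\mathcal F_k]\le\hat\rho_x\|x^k-A^{\dagger}b\|_2^2+\gamma\,\|z^{k+1}-b_{\text{Range}(A)^\perp}\|_2^2$. Here I would lean on the orthogonality relation $\langle x^i-x^{i-1},\,x^i-A^{\dagger}(b-z^i)\rangle=0$ recorded just before the theorem, which supplies the Pythagorean identities needed to keep the reference-point transfer from inflating the contraction factor.

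Finally, combining the $z$-estimate with this one-step bound gives $a_{k+1}\le\hat\rho_x a_k+\gamma\,\hat\rho_z^{\,k+1}c_0$ for $a_k:=\mathbb E\|x^k-A^{\dagger}b\|_2^2$ and $c_0:=\|z^0-b_{\text{Range}(A)^\perp}\|_2^2$, and unrolling yields $a_k\le\hat\rho_x^{\,k}a_0+\gamma c_0\sum_{j=1}^k\hat\rho_x^{\,k-j}\hat\rho_z^{\,j}$. The closed form $\sum_{j=1}^k\hat\rho_x^{\,k-j}\hat\rho_z^{\,j}=\hat\rho_z(\hat\rho_z^{\,k}-\hat\rho_x^{\,k})/(\hat\rho_z-\hat\rho_x)$ is bounded by $\hat\rho^{\,k}/\max\{|1-\hat\rho_x/\hat\rho_z|,1/k\}$, where the degenerate case $\hat\rho_x=\hat\rho_z$ produces the sum $k\hat\rho^{\,k}$ and is absorbed by the $1/k$ branch; this gives exactly the stated bound after using $\hat\rho_x^{\,k}\le\hat\rho^{\,k}$. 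I expect the main obstacle to be the reference-point transfer in the momentum step: the natural contraction is toward the drifting target $x^\ast_{k+1}$, and recentering at $A^{\dagger}b$ via a crude triangle/Young argument would inflate $\hat\rho_x$ and spoil the clean constant $\gamma$, so the delicate point is to carry out this transfer with squared norms and the orthogonality relation so that the contraction factor stays exactly $\hat\rho_x$.
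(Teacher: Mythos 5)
Your overall skeleton coincides with the paper's: the $z$-sequence estimate is exactly Lemma \ref{z-ASHBM-convergence}, your identity and bounds for the basic $\zeta=1$ step reproduce \eqref{x_Omega}, and the recursion $a_{k+1}\le\hat\rho_x a_k+\gamma\hat\rho_z^{k+1}c_0$ plus the geometric-sum estimate is the paper's closing argument verbatim. The genuine gap is precisely the step you flag as ``the delicate point'' and then leave unresolved: transferring the momentum step's guarantee from the moving target $x^{*}_{k+1}=A^{\dagger}(b-z^{k+1})$ back to the fixed target $A^{\dagger}b$. Starting from your projection inequality $\|x^{k+1}-x^{*}_{k+1}\|_2\le\|\hat{x}^{k+1}-x^{*}_{k+1}\|_2$ (where $\hat{x}^{k+1}$ denotes your basic iterate $x^{k+1}_{\mathrm b}$) and expanding both sides in squared norms around $A^{\dagger}b$ gives
$$
\|x^{k+1}-A^{\dagger}b\|_2^2\le\|\hat{x}^{k+1}-A^{\dagger}b\|_2^2+2\langle x^{k+1}-\hat{x}^{k+1},\,x^{*}_{k+1}-A^{\dagger}b\rangle,
$$
and the cross term does not vanish: $x^{k+1}-\hat{x}^{k+1}$ lies in the direction subspace $V_k=\text{Span}\{q^k,x^k-x^{k-1}\}$, while $x^{*}_{k+1}-A^{\dagger}b=A^{\dagger}(b_{\text{Range}(A)^{\bot}}-z^{k+1})$ generally has a nonzero component in $V_k$. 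The orthogonality relation you invoke, $\langle x^{i}-x^{i-1},x^{i}-A^{\dagger}(b-z^{i})\rangle=0$, only expresses $x^{k+1}-x^{*}_{k+1}\perp V_k$; it says nothing about this cross term, and it was recorded in the paper solely to make the parameters \eqref{Adap1} computable, not as an ingredient of the convergence proof. Estimating the cross term by Cauchy--Schwarz or Young either inflates the contraction factor to $(1+\epsilon)\hat\rho_x$ or demands a bound on $\|x^{k+1}-\hat{x}^{k+1}\|_2$ that you do not possess, so the claimed ``costs exactly one application of $\|x^{*}_{k+1}-A^{\dagger}b\|_2\le\sigma_{\min}^{-1}(A)\|z^{k+1}-b_{\text{Range}(A)^{\bot}}\|_2$'' is not justified by the route you describe.

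The missing idea (which is the paper's actual argument) is to never contract toward the moving target in the squared-error analysis. Introduce $\tilde{x}^{k+1}$, the orthogonal projection of the \emph{fixed} target $A^{\dagger}b$ onto $\Pi_k=x^k+V_k$. Since $x^{k+1}\in\Pi_k$, Pythagoras gives
$$
\|x^{k+1}-A^{\dagger}b\|_2^2=\|\tilde{x}^{k+1}-A^{\dagger}b\|_2^2+\|x^{k+1}-\tilde{x}^{k+1}\|_2^2\le\|\hat{x}^{k+1}-A^{\dagger}b\|_2^2+\|x^{k+1}-\tilde{x}^{k+1}\|_2^2,
$$
with no cross term, using the optimality of $\tilde{x}^{k+1}$ against $\hat{x}^{k+1}\in\Pi_k$. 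Then $x^{k+1}-\tilde{x}^{k+1}$ is the difference of the projections of $x^{*}_{k+1}$ and of $A^{\dagger}b$ onto the same affine set, i.e., the projection of $A^{\dagger}(b_{\text{Range}(A)^{\bot}}-z^{k+1})$ onto $V_k$, whence $\|x^{k+1}-\tilde{x}^{k+1}\|_2\le\sigma_{\min}^{-1}(A)\|z^{k+1}-b_{\text{Range}(A)^{\bot}}\|_2$. This is exactly what produces the additive $1/\sigma_{\min}^2(A)$ in $\gamma$ while keeping the contraction factor at $\hat\rho_x$; with this substitution (together with the easy cases $S_k\in\mathcal{Q}_k\setminus\mathcal{Q}_{k,\neq}$ and $S_k\in\mathcal{Q}_k^{c}$), the rest of your proposal goes through as written.
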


			\begin{remark}
				We compare the convergence factor $\hat{\rho}$ in Theorem \ref{the3} for Algorithm \ref{ASEHBM} with the convergence factor $\rho$ in Theorem \ref{x-ASEM-convergence} for Algorithm \ref{ASE}. For the case where the relaxation parameters $\eta=\zeta=1$ in Algorithm \ref{ASE}, we have $\rho_{z}=\hat{\rho}_{z}$ and $\rho_{x}= \hat{\rho}_{x}$. This indicates that $\rho= \hat{\rho}$, i.e., 
				Algorithm \ref{ASEHBM} and Algorithm \ref{ASE} share the same convergence factor.
				To the best of our knowledge, this is the first time that adaptive heavy ball momentum is integrated into stochastic extended iterative methods for solving general linear systems \eqref{LS}.
			\end{remark}

			\section{Numerical experiments}
			\label{Section-5}
			
			In this section, we implement  Algorithm \ref{ASE} and  Algorithm  \ref{ASEHBM}. We compare our algorithms with the REABK method \eqref{REABK} with a constant step-size \cite{Du20Ran}, as well as with 
			stochastic variance reduced gradient (SVRG) \cite{johnson2013accelerating}, and the {\sc Matlab} functions \texttt{pinv} and \texttt{lsqminnorm}.	All the methods are implemented in  {\sc Matlab} R2022a for Windows $11$ on a desktop PC with Intel(R) Core(TM) i7-1360P CPU @ 2.20GHz  and 32 GB memory. The code to reproduce our results can be found at  \href{https://github.com/xiejx-math/ASEIM-codes}{https://github.com/xiejx-math/ASEIM-codes}.

			\subsection{Inconsistency and variance reduction}
			
			It is well-known that SGD \eqref{SGD-iteration} can exhibit issues such as slow convergence or even failure to converge due to the non-vanishing variance in its gradient estimates.  Specifically, the variance satisfies  
			\[
			\lim_{k \rightarrow \infty} \mathbb{E}\left[\|\nabla f_{i_k} (x^k) - \nabla f(x^k)\|^2_2\right] \neq 0,  
			\]  
			preventing  variance reduction. For instance, consider using SGD to solve the least-squares problem
			\begin{equation}\label{quar-f}  
				\min_{x \in \mathbb{R}^n} f(x) := \frac{1}{2m} \|Ax - b\|_2^2 = \frac{1}{m} \sum_{i=1}^m f_i(x),
			\end{equation}
			where each component function is \( f_i(x) = \frac{1}{2}(A_{i,:}x - b_i)^2 \). For convenience, we assume that $A  $ is normalized to $\|A_{i,:}\|^2_2=1$ for each row of $A$. As shown in Section \ref{section-122}, SGD in this context reduces to the RK method.
			Let \( x^* \) be an optimal solution of \eqref{quar-f}. The variance of the gradient estimate at \( x^* \) is  
			\[
			\sigma^2 = \mathbb{E}\left[\| \nabla f_i(x^*) - \nabla f(x^*)\|^2_2\right] = \mathbb{E}\left[\| \nabla f_i(x^*)\|^2_2\right] = \sum_{i=1}^m \frac{(A_{i,:}x^* - b_i)^2}{m} = \frac{\|r^*\|_2^2}{m},  
			\]  
			where \( r^* = Ax^* - b \) denotes the residual vector. When the system is consistent, as the iterates approach $x^*$, the residual gradually drops to zero and thus so does the variance. This vanishing variance enables SGD to converge even with constant stepsizes. 
			When the system is inconsistent, $r^* \neq 0$ and the variance \( \sigma^2 \) persists, necessitating variance reduction techniques.  In our test, we focus on  SVRG to address this limitation. The detailed iteration scheme of  SVRG for solving least-squares problems are formally described in Algorithm \ref{SVRG}.
			
			\begin{algorithm}[htpb]
				\caption{SVRG for solving the least squares \eqref{LS}. }
				\label{SVRG}
				\begin{algorithmic}
					\Require
					$A\in \mathbb{R}^{m\times n}$, $b\in \mathbb{R}^m$, the positive integer $N$, $k=0$, the initial point $x^{0}\in\operatorname{Range}(A^\top)$, and the  step-size $\alpha$.
					\begin{enumerate}
						\item[1:] Set $\tilde{x}=x^{k}$, $\tilde{u}=\frac{1}{m}A^{\top}(A\tilde{x}-b)$, and $x^{0}=\tilde{x}$.
						\item[2:] For $t=0, 1, \cdots,N-1$
						
						\qquad Randomly select an index $i_{t} \in [m]$ with probability $\frac{1}{m}$.
						
						\qquad Compute $g^{t}=(A_{i_{t},:}(x^{t}-{x}^{0}))A_{i_{t},:}^{\top}+\tilde{u}$.
						
						\qquad Update $x^{t+1}=x^{t}-\alpha g^{t}$.
						
						End for
						\item [3:] \textbf{Option I:} \;$x^{k}=x^{N}$.
						\item [4:] \textbf{Option II:} \;$x^{k}=x^{t}$ for randomly chosen $t \in \{0, 1, \cdots, N-1\}$.
						\item[5:] If the stopping rule is satisfied, stop and go to output. Otherwise, set $k=k+1$ and go to Step $1$.
					\end{enumerate}
					
					\Ensure
					The approximate solution $x^k$.
				\end{algorithmic}
			\end{algorithm}

			We emphasize that the REK-type methods inherently possess variance reduction properties. Specifically, let us consider	Algorithm \ref{ASE}.  At the $k$-th iteration, its update for  $x^{k+1}$ can be interpreted as performing a single SGD step for the stochastic optimization problem
			$$
			\mathop{\operatorname{\min}}\limits_{x \in \mathbb{R}^{n}} \mathop{\mathbb{E}}\limits_{S\in\Omega}[f_{S}(x, z^{k+1})]=\frac{1}{2}\mathop{\mathbb{E}}\limits_{S\in\Omega}[\|S^{\top}(Ax-(b-z^{k+1}))\|_{2}^{2}].
			$$
			Since $z^{k+1}$ converges to $b_{\text{Range}(A)^{\perp}}=AA^\dagger b-b$  independently of the iteration sequence $\{x^{k}\}_{k \geq 0}$ (see Lemma \ref{z-the basic method-convergence}), it follows that
			$$
			\mathbb{E}\left[\| 	\nabla f_{S}(x^{*}, z^{k+1}) -\mathbb{E}[\nabla f_{S}(x^{*}, z^{k+1})] \|^2_2\right]=\mathbb{E}\left[\left\| A^\top (SS^\top-\mathbb{E}[SS^\top])(AA^\dagger b-b-z^{k+1})\right\|^2_2\right]
			$$
			vanishes asymptotically. This demonstrates that  Algorithm \ref{ASE} achieves variance reduction.

			\subsection{Numerical setup}

			We consider two types of matrices. One is the random Gaussian matrices generated by the {\sc Matlab} function {\tt randn}. Specifically, for given $m, n, r$, and $\kappa>1$, we construct a dense matrix $A$ by $A=U D V^\top$, where $U \in \mathbb{R}^{m \times r}, D \in \mathbb{R}^{r \times r}$, and $V \in \mathbb{R}^{n \times r}$. Using {\sc Matlab}  notation, these matrices are generated by {\tt [U,$\sim$]=qr(randn(m,r),0)}, {\tt [V,$\sim$]=qr(randn(n,r),0)}, and {\tt D=diag(1+($\kappa$-1).*rand(r,1))}. So the condition number and the rank of $A$ are upper bounded by $\kappa$ and $r$, respectively. The other is  real-world data which are available via SuiteSparse Matrix Collection \cite{Kol19} and LIBSVM \cite{chang2011libsvm}, where only the coefficient matrices $A$ are employed in the experiments.

			In our numerical experiments, we focus exclusively on inconsistent linear systems. For consistent systems, RK-type methods naturally outperform REK-type methods due to the additional computational cost introduced by the auxiliary sequence \(\{z^k\}_{k \geq 0}\) in REK-type methods. While RK-type methods are highly effective for solving consistent systems, REK-type methods are specifically designed to address their limitations in handling inconsistent systems. To construct an inconsistent linear system, we set $b=A x+b_e$, where $x$ is a vector with entries generated from a standard normal distribution and $b_e \in \operatorname{Null}\left(A^\top\right)$. 
			The computations are terminated once the relative solution error (RSE), defined as
			$\text{RSE}=\|x^k-A^\dagger b\|^2_2/\|x^0-A^\dagger b\|^2_2$, is less than a specific error tolerance or the number of iterations exceeds a certain limit. 
			In practice, we consider a variable as zero when it is less than  \texttt{eps}.
			For each experiment, we run $20$ independent trials.

			As for the probability spaces, we consider the following partitions of $[m]$ and $[n]$ which are commonly used in the context \cite{Nec19,necoara2022stochastic,Du20Ran,zeng2023adaptive}, $[m] = \left\{\mathcal{I}_1, \mathcal{I}_2, \dots, \mathcal{I}_\ell\right\}$, $[n]=\left\{\mathcal{J}_1, \mathcal{J}_2, \dots, \mathcal{J}_t\right\}$, where
			$$	
			\begin{aligned}
				\mathcal{I}_i&=\left\{\varpi_m(k): k=(i-1)p+1,(i-1)p+2,\ldots,ip\right\}, i=1, 2, \ldots, \ell-1,
				\\
				\mathcal{I}_t&=\left\{\varpi_m(k): k=(\ell-1)p+1,(t-1)p+2,\ldots,m\right\}, |\mathcal{I}_\ell|\leq p,\\
				\mathcal{J}_j&=\left\{\varpi_n(k): k=(j-1)p+1,(j-1)p+2,\ldots,jp\right\}, j=1, 2, \ldots, t-1,
				\\
				\mathcal{J}_t&=\left\{\varpi_n(k): k=(t-1)p+1,(t-1)p+2,\ldots,n\right\}, |\mathcal{J}_t|\leq p.
			\end{aligned}$$
			Here $\varpi_m$ and $\varpi_n$ are uniform random permutations on $[m]$ and $[n]$, respectively, and $p$ is the block size. The sampling matrices are chosen according to the rules discussed in Remark \ref{remark_sampling}.
			With this setup, when $\eta=\zeta=1$, Algorithms
			\ref{ASE} and \ref{ASEHBM} lead to the REABK with adaptive step-sizes (AREABK) and REABK with adptive HBM (AmREABK), respectively. Table \ref{table-p} provides the parameter configurations for REABK, SVRG, AREABK, and AmREABK, which are consistent with the established settings in prior works \cite{Du20Ran,roux2012stochastic, johnson2013accelerating, nguyen2017sarah}.

			\begin{table}
				\renewcommand\arraystretch{1.5}
				\setlength{\tabcolsep}{14pt}
				\caption{Parameter configurations for REABK, SVRG, AREABK, and AmREABK, where $\Gamma_{\max}:=\max\{\Gamma_{\max}^{\mathcal{I}},\Gamma_{\max}^{\mathcal{J}}\}$ with $\Gamma_{\max}^{\mathcal{I}}$ and $\Gamma_{\max}^{\mathcal{J}}$ being given in Remark \ref{remark_sampling}, and $L:=\mathop{\max}\limits_{ i \in[m] } \{\|A_{i, :}\|_{2}^{2}\}$.}
				\label{table-p}
				\centering
				{\scriptsize
					\begin{tabular}{  |c| l|   }
						\hline
						{ Solvers  }& { Parameters   }  
						\\
						\hline
						\multirow{2}{*}
						{REABK \cite{Du20Ran}}& $\alpha=1.75/\Gamma_{\max} $ (randomized Gaussian matrices) \\ 
						&     $\alpha=1/\Gamma_{\max}$ (real-world data)   \\
						\hline
						{SVRG \cite{johnson2013accelerating}}& $\alpha=0.1/L, N=2m$   \\
						\hline
						AREABK(Algorithm \ref{ASE}) & $\eta=1, \zeta=1$   \\
						\hline
						{AmREABK} (Algorithm \ref{ASEHBM}) & Stages  I and   II   \\
						\hline
					\end{tabular}
				}
			\end{table}

			\subsection{Impact of block size $p$} 
			In this subsection, we  investigate the impact of the block size $p$ on the convergence of REABK, AREABK, and AmREABK with random Gaussian matrices.
			We present three different scenarios: well-conditioned, ill-conditioned, and rank-deficient coefficient matrices. 
			The performance of the algorithms is measured in the computing time (CPU), the number of full iterations $(k\cdot\frac{p}{m})$, and the actual convergence factor. The number of full iterations makes the number of operations for one pass through the rows of $A$ are the same for all the algorithms. And the actual convergence factor is defined as 
			$$\rho_{\text{actual}} = \left(\frac{\|x^K-A^\dagger b\|^2_2}{\|x^0-A^\dagger b\|^2_2}\right)^{1/K},$$ 
			where $K$ is the number of iterations when the algorithm terminates. 
			
			The results are displayed in Figure \ref{figureR1}.
			The bold line illustrates the median value over $50$ trials. The lightly shaded area signifies the  range from the minimum to the maximum values, while the darker shaded one indicates the data lying between the $25$-th and $75$-th quantiles. Additionally, we have incorporated a plot of the upper bound $\rho_{3}$, as discussed in Remark \ref{remark_sampling}, for a comprehensive comparison. It  can be observed that both  REABK, AREABK, and AmREABK display smaller convergence factors  compared to the theoretical bound $\rho_{3}$.
			
			Figure \ref{figureR1} illustrates that AmREABK requires fewer full iterations and less CPU time as the block size $p$ increases.
			In contrast, both REABK and AREABK require a greater number of full iterations. This can be attributed to the fact that the actual convergence factor of AmREABK decreases more significantly than those of REABK and AREABK as $p$ increases. In fact, it can be observed that AmREABK  consistently outperforms  AREABK  in terms of the number of full iterations, regardless of the condition number of $A$ and whether it is full rank. However, when the block size is relatively small (e.g. $p\leq 32$), AREABK demonstrates superior performance to AmREABK in terms of CPU time. This is because AmREABK demands more computation costs at each step compared to AREABK. Moreover, when the block size is relatively large (e.g. $p\geq16$), both AREABK and  AmREABK outperform REABK in terms of both the number of full iterations and CPU time.
			
			\begin{figure}[hptb]
				\centering
				\begin{tabular}{cc}
					\includegraphics[width=0.33\linewidth]{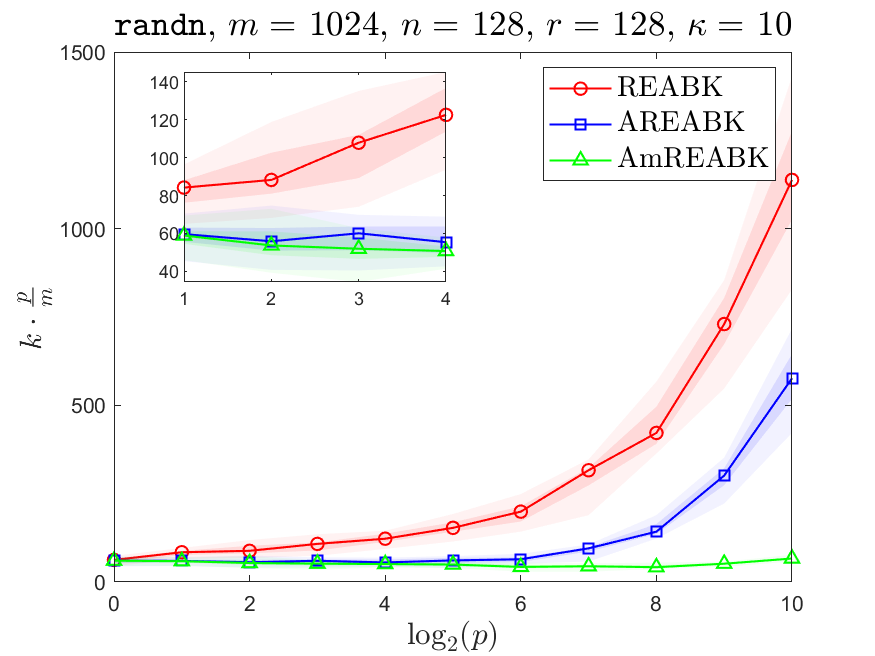}
					\includegraphics[width=0.33\linewidth]{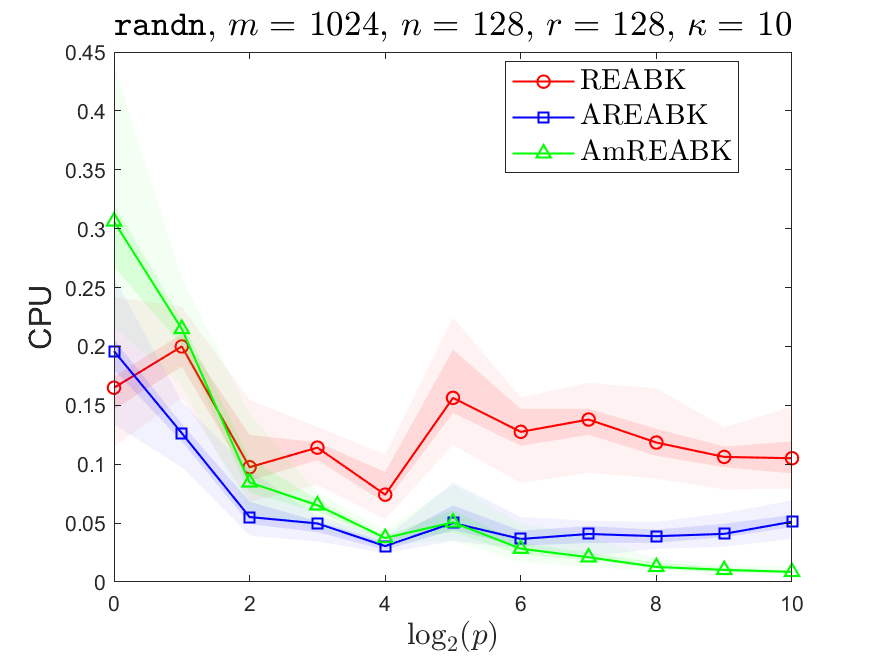}
					\includegraphics[width=0.33\linewidth]{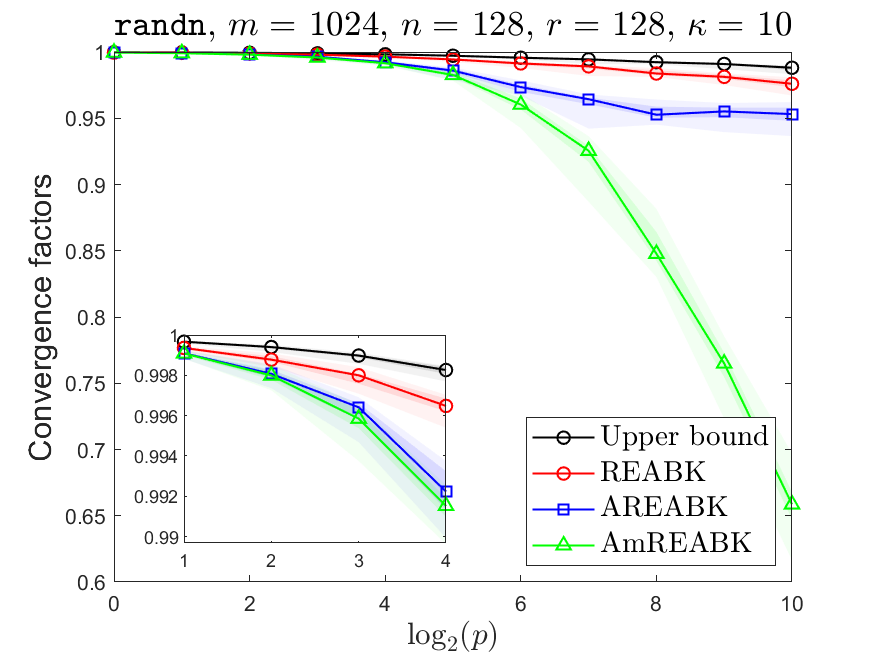}\\
					\includegraphics[width=0.33\linewidth]{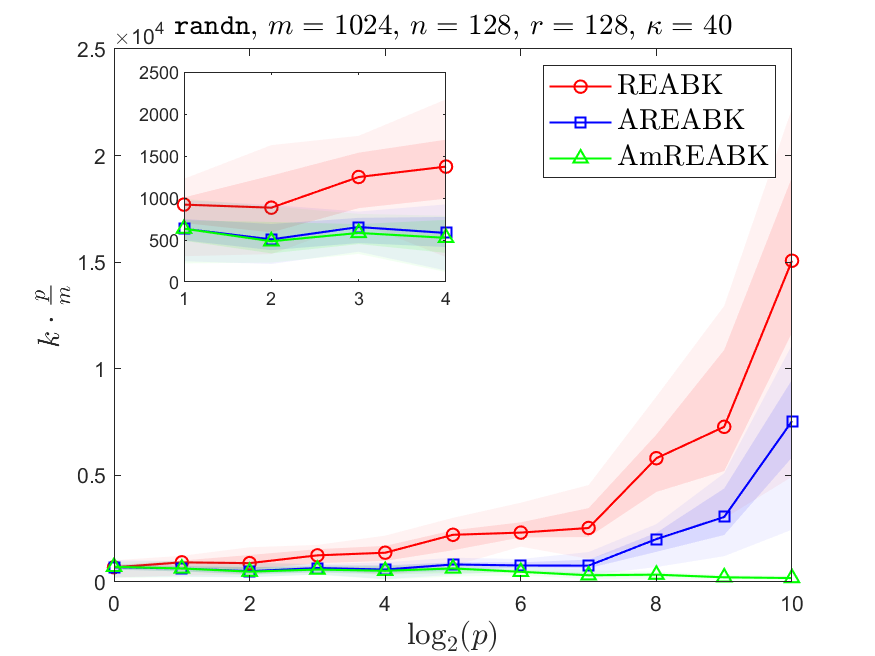}
					\includegraphics[width=0.33\linewidth]{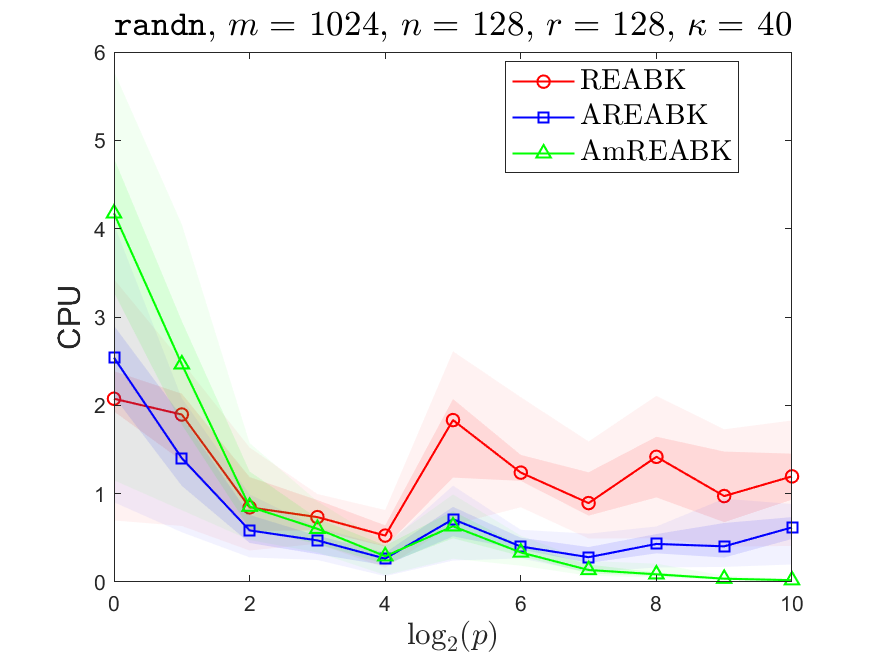}
					\includegraphics[width=0.33\linewidth]{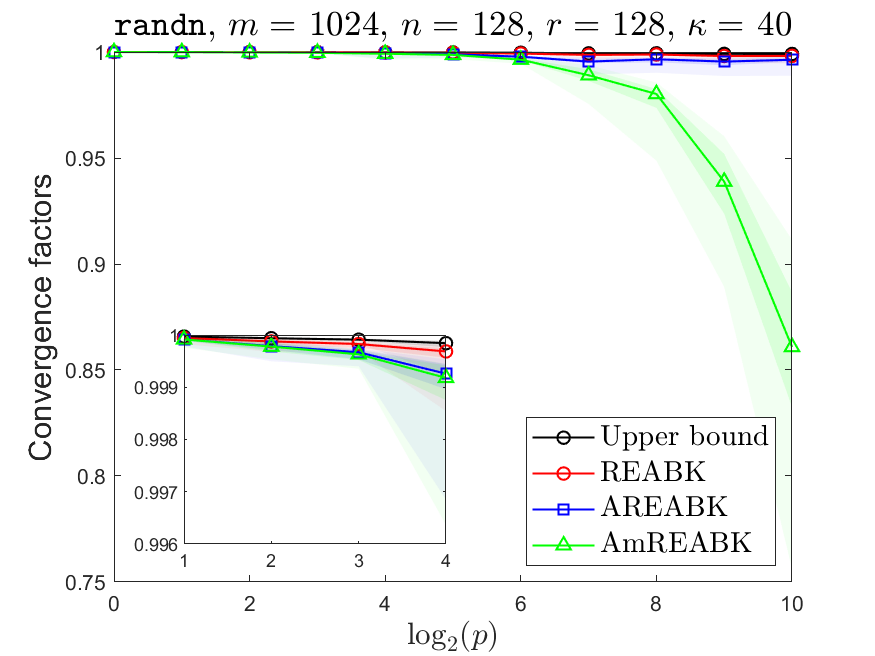}\\
					\includegraphics[width=0.33\linewidth]{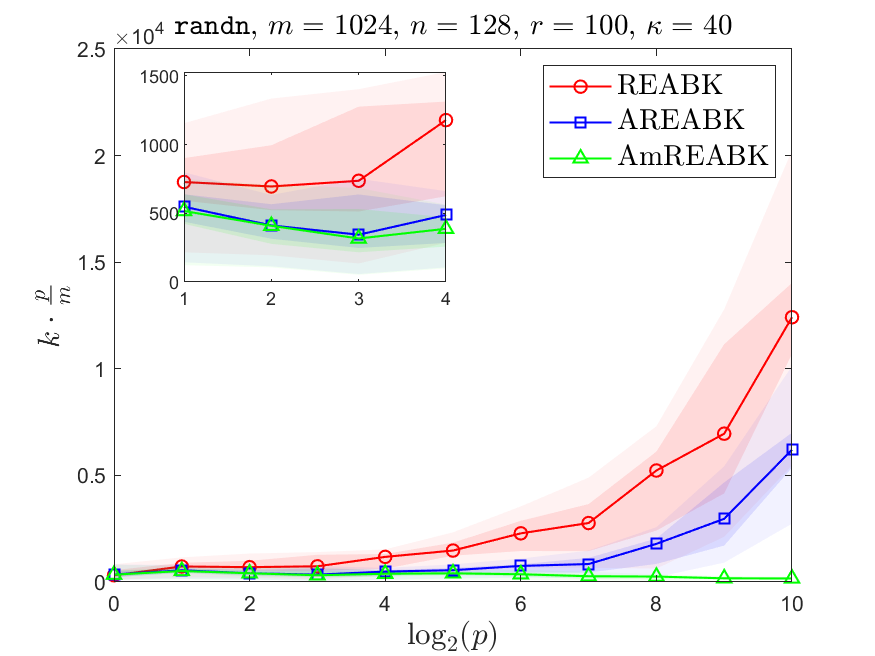}
					\includegraphics[width=0.33\linewidth]{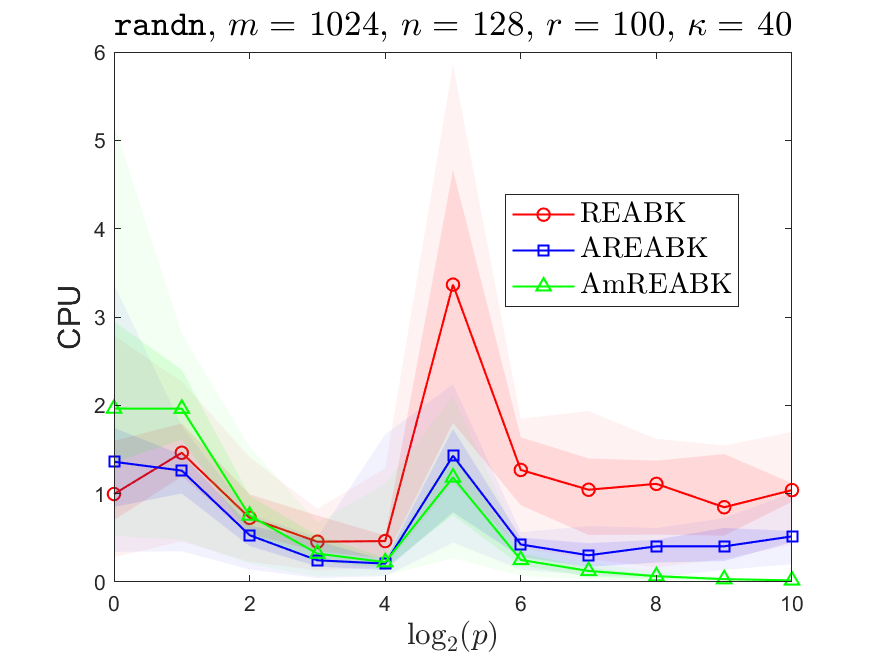}
					\includegraphics[width=0.33\linewidth]{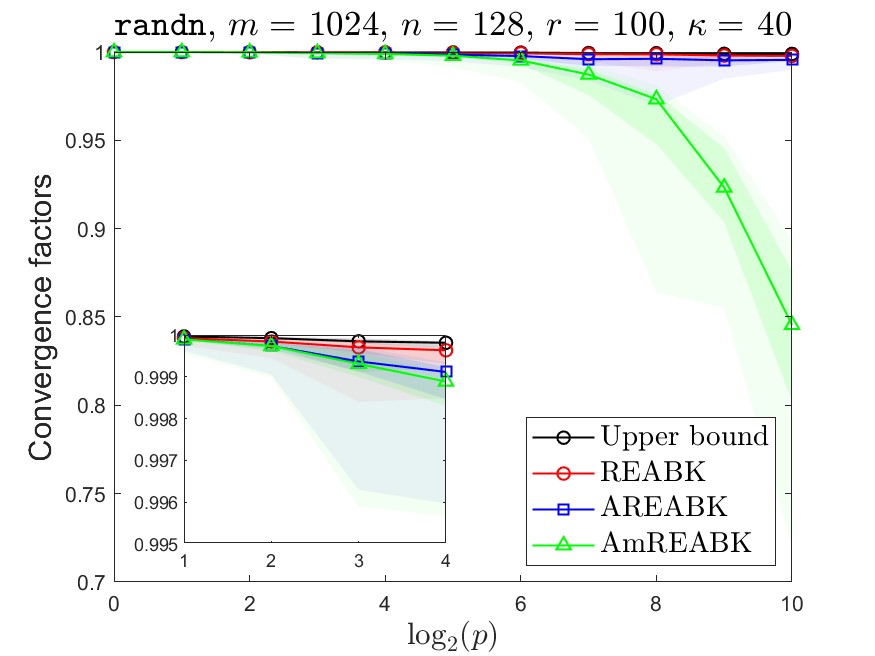}
				\end{tabular}
				\caption{
					Figures illustrate the evolution of the number of full iterations (left),  the CPU time (middle), and the actual convergence factor (right) with respect to the block size $p$.  The title of each plot indicates the values of $m,n,r$, and $\kappa$. All computations are terminated once $\text{RSE}<10^{-12}$. }
				\label{figureR1}
			\end{figure}

			\subsection{Comparison to REABK and SVRG}

			This subsection presents a performance comparison among  SVRG, REABK, AREABK, and AmREABK. We assess these methods through both iteration numbers and CPU time, focusing on their performance with different types of coefficient matrices. For SVRG, we employ Option I for updating the next iterate.
			
			In Figure \ref{Rfigure2}, we present the CPU time comparisons for  SVRG, REABK, AREABK, and AmREABK with random Gaussian matrices. The results reveal that the REK-type methods (REABK, AREABK, and AmREABK) consistently outperform SVRG. Notably, SVRG demonstrates greater sensitivity to the condition number of the coefficient matrix.
			
			It can be seen that the CPU time for REK-type methods exhibits minimal variation as the matrix size increases. This stability highlights their scalability advantage: the number of iterations required remains nearly constant, ensuring consistent efficiency regardless of problem dimension. Figure \ref{RfigureM2} further validates this by showing that the iteration count for REK-type methods remains effectively unchanged as the number of matrix rows increases. From Figures \ref{Rfigure2} and \ref{RfigureM2}, it is clear that among REABK, AREABK, and AmREABK, the AmREABK method is the most competitive.

			\begin{figure}[hptb]
				\centering
				\begin{tabular}{cc}
					\includegraphics[width=0.33\linewidth]{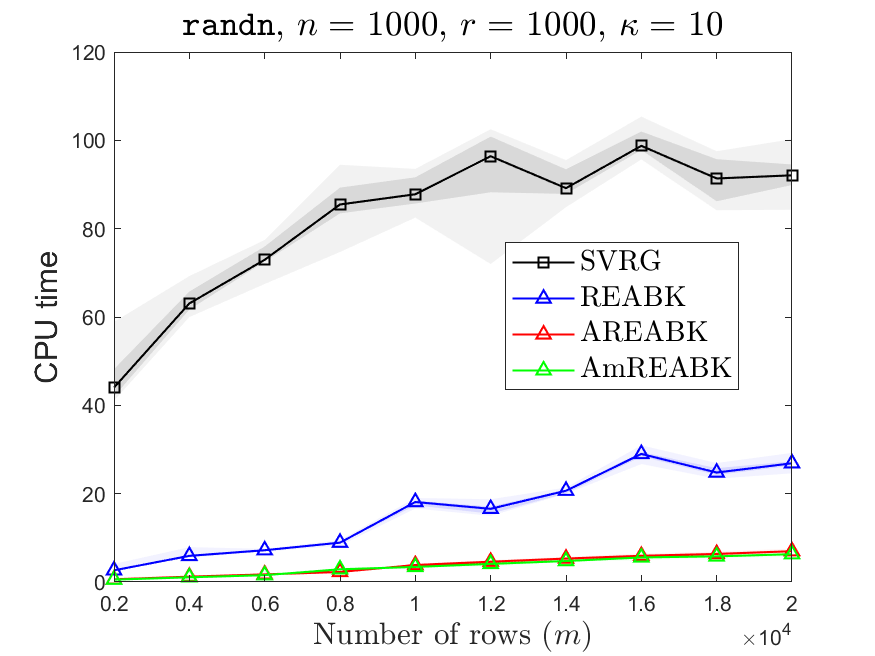}
					\includegraphics[width=0.33\linewidth]{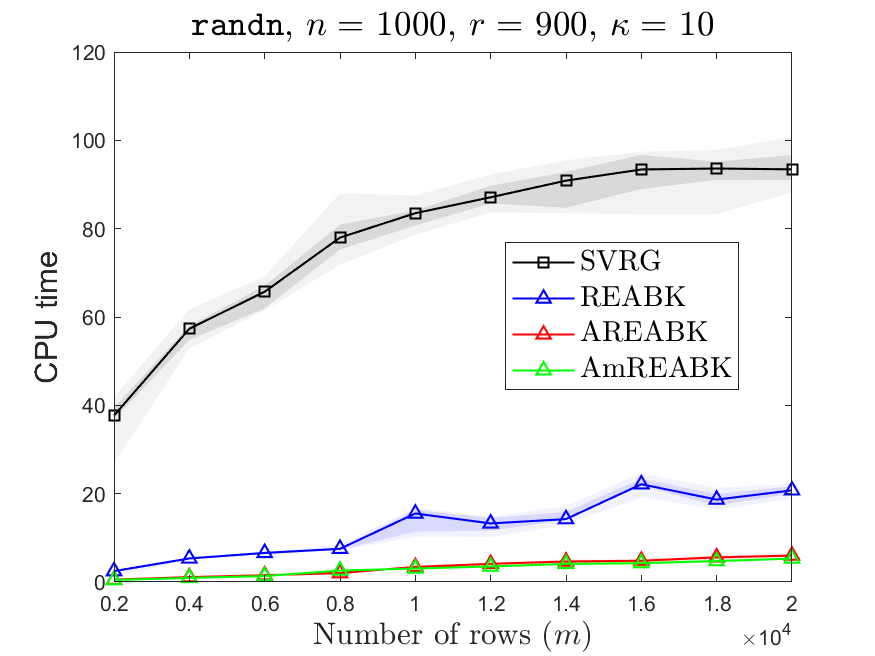}
					\includegraphics[width=0.33\linewidth]{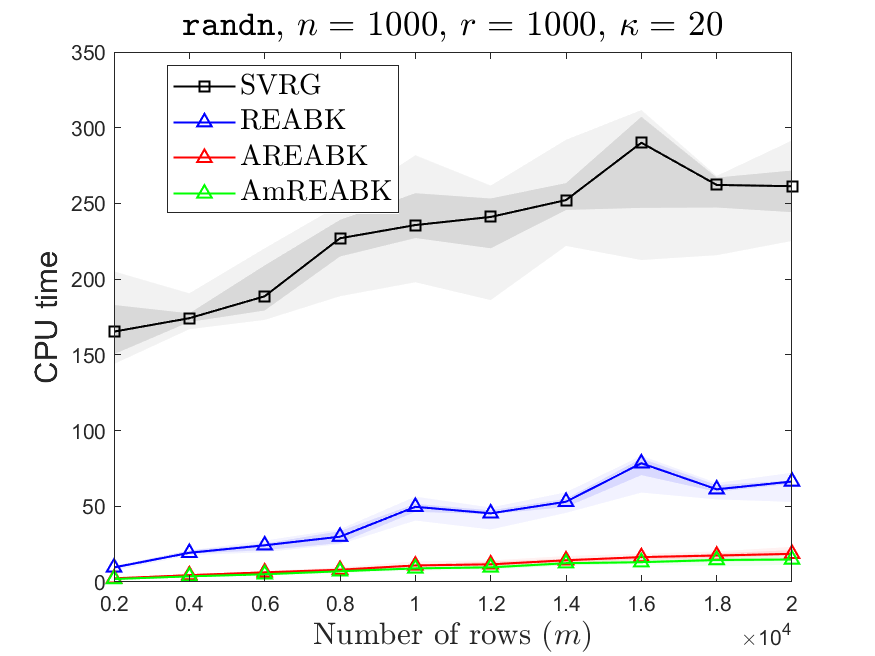}
				\end{tabular}
				\caption{
					Figures illustrate the evolution of  CPU time (in seconds) vs increasing number of rows.  The title of each plot indicates the values of $m,n,r$, and $\kappa$. We set $p=300$ for REABK, AREABK, and AmREABK.  All computations are terminated once $\text{RSE}<10^{-12}$.}
				\label{Rfigure2}
			\end{figure}
			
			\begin{figure}[tbhp]
				\centering
				\begin{tabular}{cc}
					\includegraphics[width=0.33\linewidth]{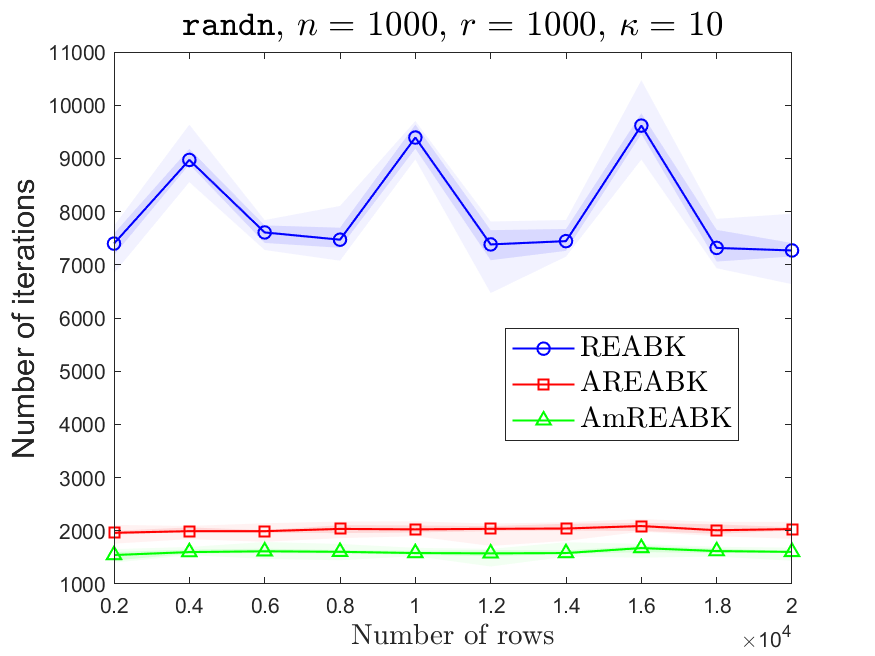}
					\includegraphics[width=0.33\linewidth]{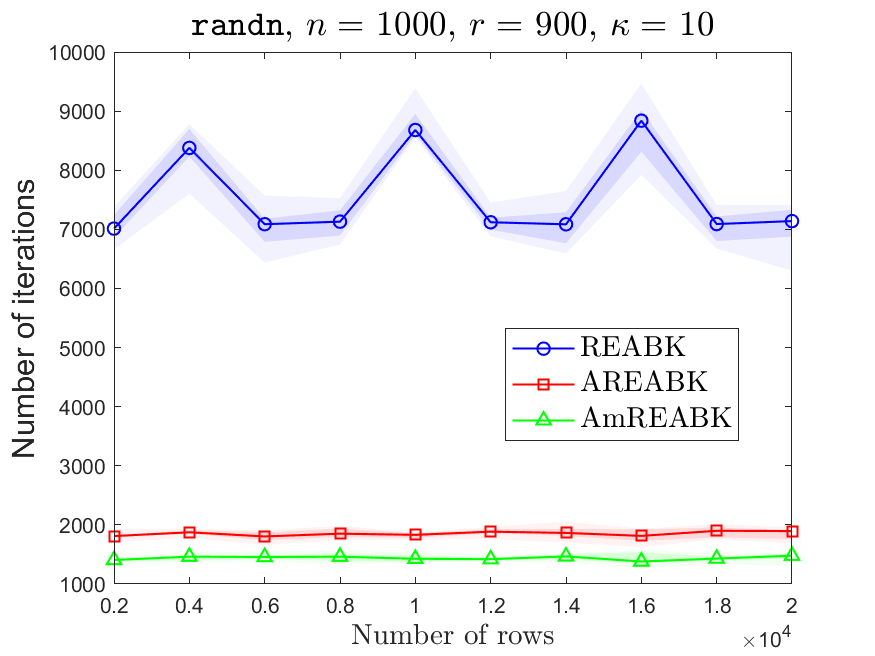}
					\includegraphics[width=0.33\linewidth]{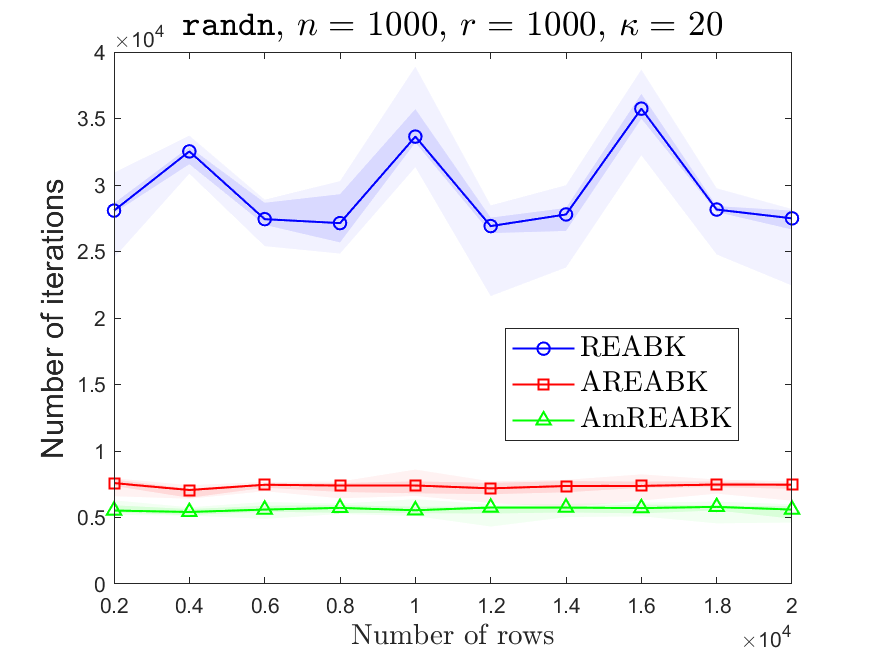}
				\end{tabular}
				\caption{Figures illustrate the number of iterations vs increasing number of rows. We set $p=300$ and stop the
					algorithms once $\text{RSE}<10^{-12}$. The title of each
					plot indicates the values of $n,r$ and $\kappa$.  }
				\label{RfigureM2}
			\end{figure}
			
			We note that SVRG has practical limitations due to its need for periodic full matrix-vector products.  These requirements are impractical for large-scale linear systems where \(A\) cannot be stored in RAM. In fact, it has been observed that the randomized extended-type methods consistently outperform   SVRG in terms of CPU time throughout our experiments. Consequently, we will focus our subsequent tests on the REK-type methods.
			
			Table \ref{table1} and Figure \ref{figureR2} present the number of  iterations and the computing time for REABK, AREABK, and AmREABK when applied to sparse matrices from SuiteSparse Matrix Collection and LIBSVM. The six matrices from SuiteSparse Matrix Collection are {\tt bibd\_16\_8}, {\tt crew1}, {\tt WorldCities},  {\tt model1}, {\tt ash958}, and {\tt Franz1}, and the three from LIBSVM are {\tt a9a}, {\tt cod-rna}, and {\tt ijcnn1}. This selection covers a range of matrix properties, including full-rank and rank-deficient, well-conditioned and ill-conditioned,  overdetermined and underdetermined.
			
			Table \ref{table1} shows the numerical results on the matrices from SuiteSparse Matrix Collection.
			As displayed in Table \ref{table1}, AmREABK is consistently better than AREABK in terms of iterations across all test cases. However, we note that AREABK  may outperform AmREABK in terms of CPU time in specific cases, such as {\tt ash958} and {\tt Franz1}. This is because  AmREABK requires more computations at each step compared to REABK.
			Moreover, Table \ref{table1} also reveals that both AmREABK and AREABK outperform REABK in terms of both the number of iterations and CPU time for all test cases. 
			Figure \ref{figureR2} presents the numerical results on the matrices from LIBSVM. 
			Our results show that AmREABK still outperforms REABK and AREABK. Notably,  for dataset {\tt cod-rna}, it can be observed that only AmREABK successfully finds the solution, while both REABK and AREABK fail.

			\begin{table}
				\renewcommand\arraystretch{1.5}
				\setlength{\tabcolsep}{2pt}
				\caption{ The average Iter and CPU of REABK, AREABK, and AmREABK for linear systems with coefficient matrices from SuiteSparse Matrix Collection \cite{Kol19}. 
					We set the block size $p=30$ and stop the
					algorithms once $\text{RSE}<10^{-12}$.}
				\label{table1}
				\centering
				{\scriptsize
					\begin{tabular}{  |c| c| c| c| c |c |c |c | c|c|  }
						\hline
						\multirow{2}{*}{ Matrix}& \multirow{2}{*}{ $m\times n$ }  &\multirow{2}{*}{rank}& \multirow{2}{*}{$\frac{\sigma_{\max}(A)}{\sigma_{\min}(A)}$}  &\multicolumn{2}{c| }{REABK}  &\multicolumn{2}{c| }{AREABK} &\multicolumn{2}{c| }{AmREABK}
						\\
						\cline{5-10}
						& &   &    & Iter & CPU    & Iter & CPU   & Iter & CPU     \\
						\hline
						{\tt bibd\_16\_8}& $120\times12870$ &  120  & 9.54 &     4082.62 &   2.3 &  2809.80 &   1.72 &  2150.14 &   {\bf 1.61 }   \\
						\hline
						{\tt crew1} & $135\times6469 $ &  135  &18.20  & 30092.90 &   5.51 &  3844.94 &   0.48 &  3380.62 &   {\bf 0.45} \\
						\hline
						{\tt WorldCities} & $315\times100$ &  100  &6.60  & 70816.16 &   0.43 & 12551.30 &   0.08 &  3426.90 &  {\bf 0.02}  \\
						\hline
						{\tt model1} & $  362\times798 $ &  362  & 17.57 &84087.38 &   0.50 &  8153.02 &   0.051 &  6275.02 &   {\bf 0.049}\\
						\hline
						{\tt ash958} & $958\times292$ &  292  &3.20  &  2931.34 &   0.029 &   991.16 &   {\bf0.0044 } &   957.54 &   0.0054 \\
						\hline
						{\tt Franz1} & $ 2240\times768 $ &  755  & 2.74e+15 & 10040.46 &   0.31 &  3138.16 &  {\bf 0.043} &  3063.08 &   0.068 \\
						\hline
					\end{tabular}
				}
			\end{table}

			\begin{figure}[hptb]
				\centering
				\begin{tabular}{cc}
					\includegraphics[width=0.33\linewidth]{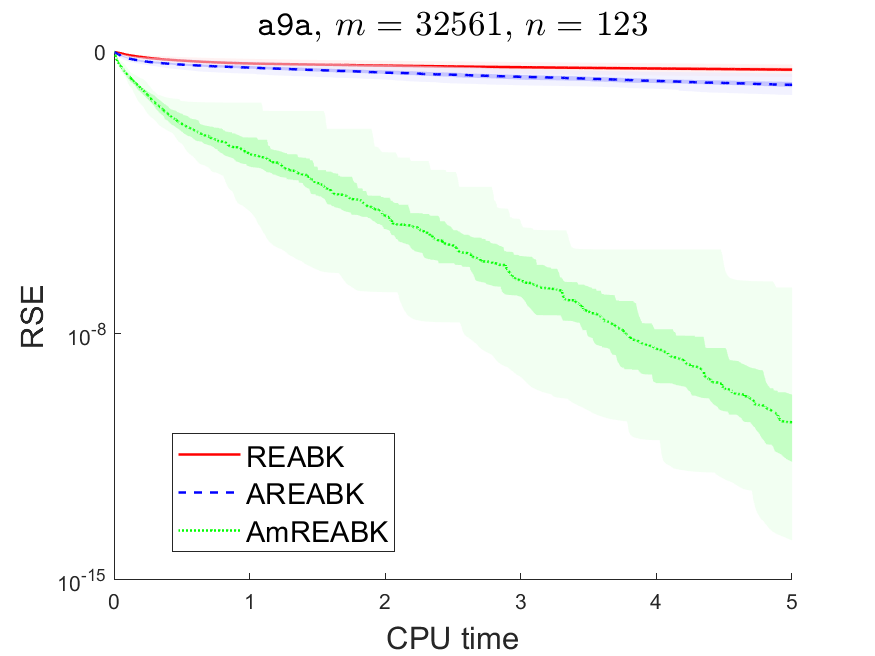}
					\includegraphics[width=0.33\linewidth]{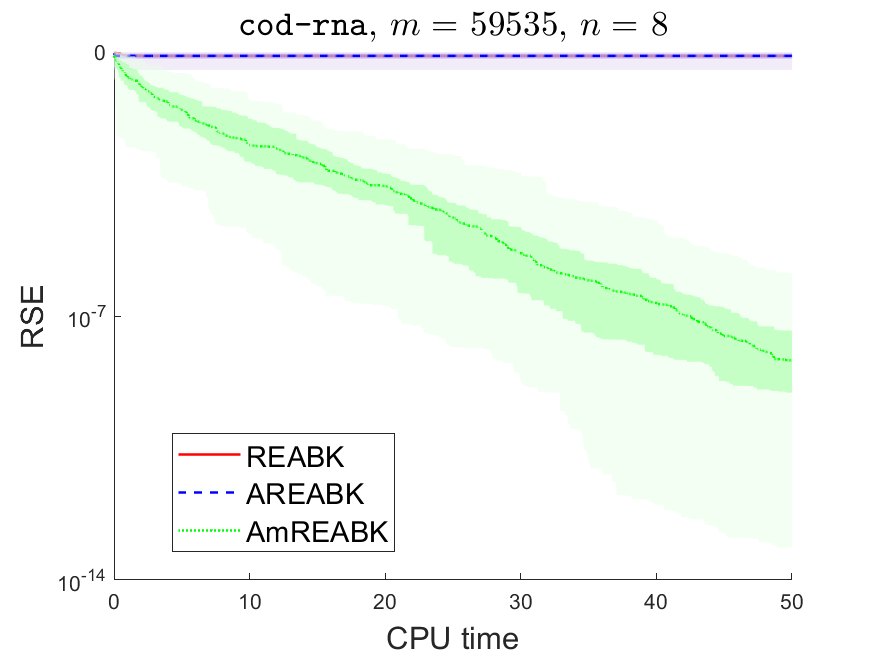}
					\includegraphics[width=0.33\linewidth]{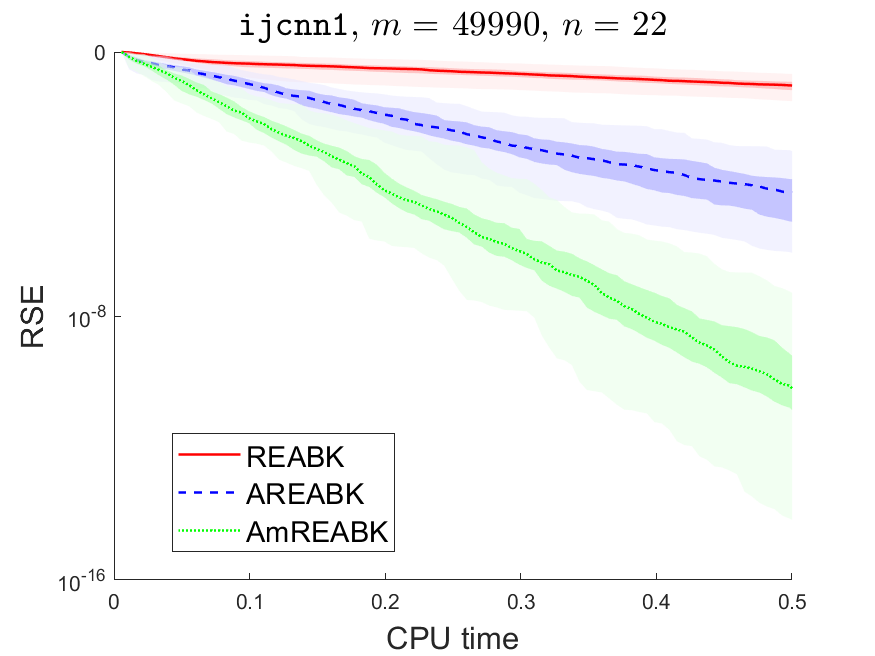}\\
					\includegraphics[width=0.33\linewidth]{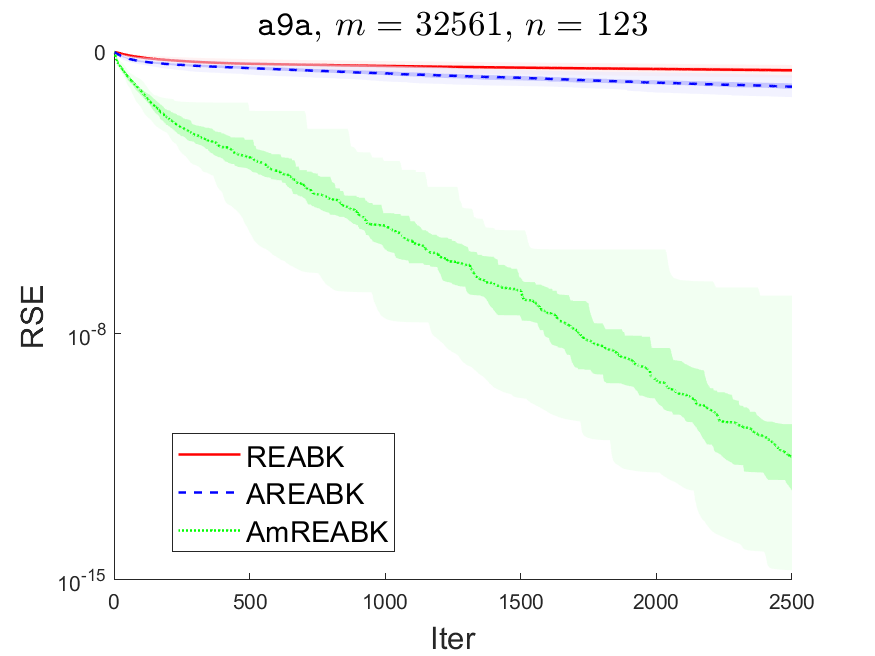}
					\includegraphics[width=0.33\linewidth]{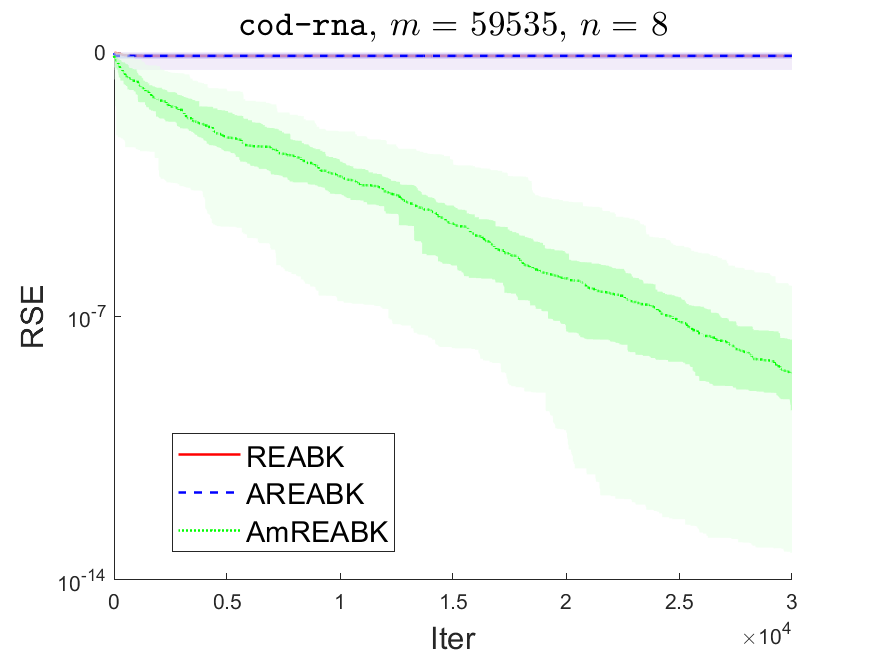}
					\includegraphics[width=0.33\linewidth]{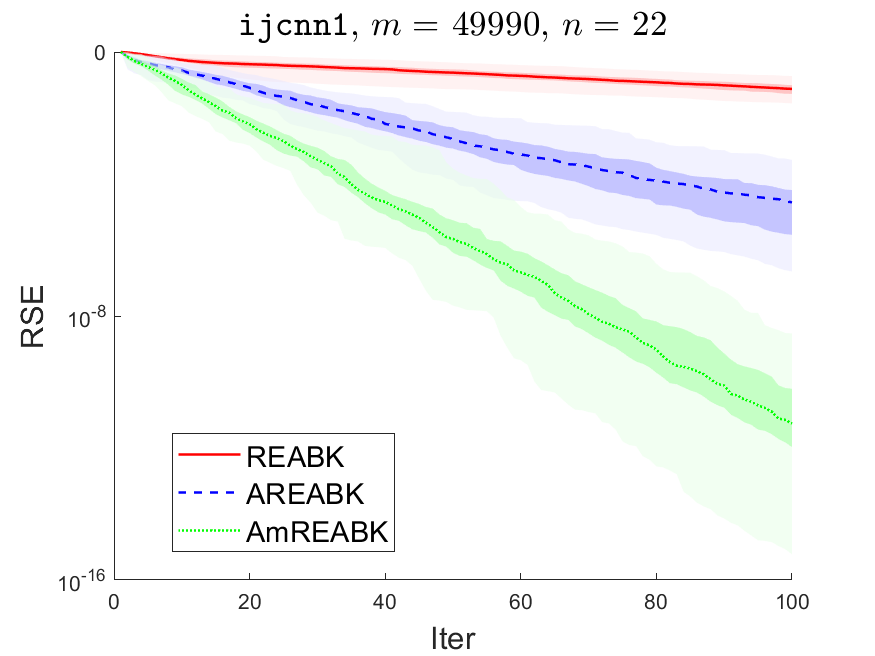}
				\end{tabular}
				\caption{Performance of REABK, AREABK, and AmREABK for linear systems with coefficient matrices from  LIBSVM \cite{chang2011libsvm}.
					Figures illustrate the evolution of RSE with respect to the number of iterations and the CPU time (in seconds).
					The title of each plot indicates the names and sizes of the data.  We set the block size $p=300$ and stop the
					algorithms if the number of iterations exceeds a certain limit.  }
				\label{figureR2}
			\end{figure}

			\subsection{Comparison to {\tt pinv} and {\tt lsqminnorm}}
			
			Next,  we compare the performance of  AmREABK with  {\sc Matlab} functions  {\tt pinv} and {\tt lsqminnorm}.
			To obtain the minimum Euclidean norm least-squares solution easily,  we use the randomly generated matrices with full column rank, where $m\geq n$ and $r=n$. 
			This guarantees that $x^*$ is the desired unique minimum Euclidean norm least-squares solution.
			
			We terminate AmREABK if the accuracy of its approximate solution is comparable to that of the approximate solution obtained by {\tt pinv} and {\tt lsqminnorm}. 
			Figure \ref{RfigureM1} illustrates the CPU time against the number of rows with number of columns fixed. The results show that for smaller coefficient matrices $A$, {\tt pinv} and {\tt lsqminnorm} outperform AmREABK. However, as the scale of $A$ increases, AmREABK exhibits superior computational efficiency.
			This benefit arises from two main reasons. First, as demonstrated in Figure \ref{RfigureM2},  AmREABK exhibits a scalability advantage in that the number of iterations remains nearly constant as the problem dimension grows. Second, in implementation, AmREABK can avoid time-consuming row extraction operations by pre-storing the submatrices of $A$ divided by the partition, thereby further improving the practical performance.

			\begin{figure}[tbhp]
				\centering
				\begin{tabular}{cc}
					\includegraphics[width=0.33\linewidth]{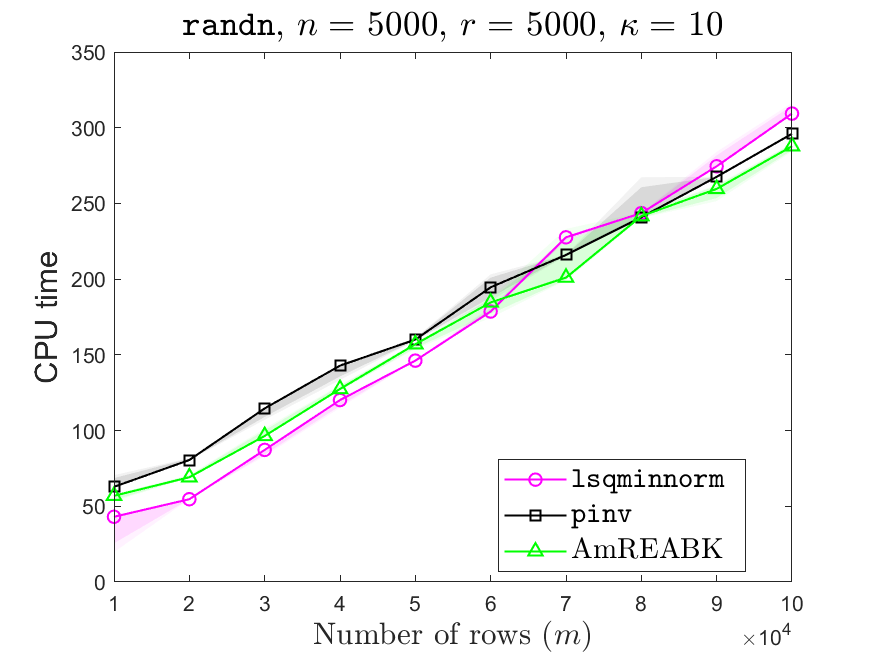}
					\includegraphics[width=0.33\linewidth]{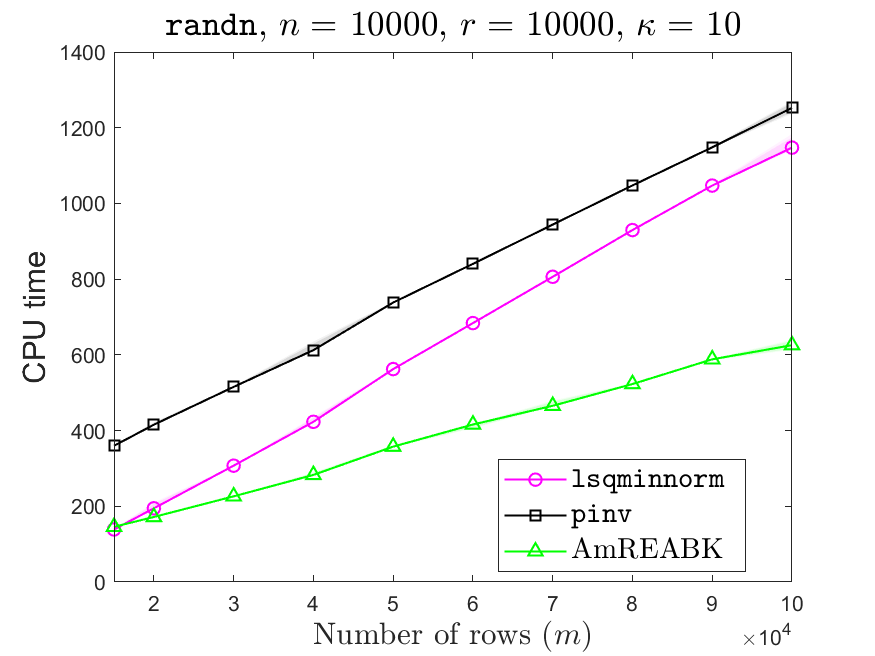}
					\includegraphics[width=0.33\linewidth]{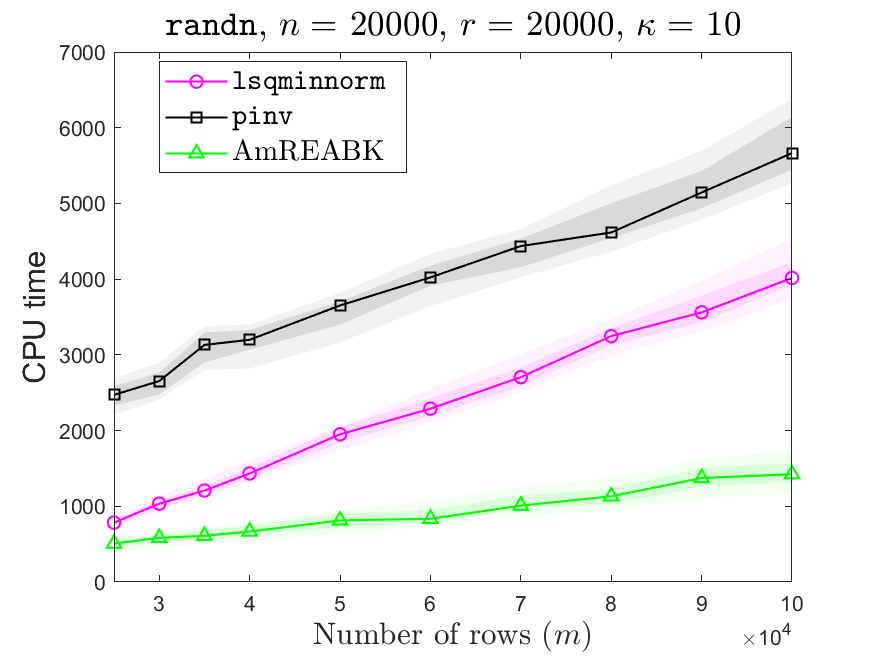}
				\end{tabular}
				\caption{Figures illustrate the CPU time (in seconds) vs increasing number of rows. We set $p=500$.  The title of each
					plot indicates the values of $n,r$ and $\kappa$.  }
				\label{RfigureM1}
			\end{figure}
			
			\section{Concluding remarks}
			\label{Section-6}
			
			We have developed a simple and versatile stochastic extended iterative  algorithmic framework for finding the unique minimum norm least-squares solution for any linear system.
			Notably, we have incorporated the HBM technique into our algorithmic framework. We investigated to adaptively learn the parameters for the momentum variant of this method. Numerical results have confirmed the efficiency of our adaptive stochastic extended iterative method.
			
			The randomized sparse Kaczmarz method proposed in \cite{schopfer2019linear} has been recognized as an effective strategy for obtaining sparse solutions to linear systems. A potential avenue for future research could be the extension of our adaptive stochastic extended iterative method to address sparse recovery problems, especially in situations where linear measurements are corrupted by noise. Additionally, Nesterov's momentum \cite{nesterov1983method,nesterov2003introductory} has gained popularity as a momentum acceleration technique, and recent studies have introduced variants of Nesterov's momentum for accelerating stochastic optimization algorithms \cite{lan2020first}. Research on  the incorporation of Nesterov's momentum into our stochastic extended iterative method would also be a valuable topic.
			
			\bibliographystyle{amsplain}
			\bibliography{zeng2023}

			\section{Appendix. Proof of the main results}
			
			\subsection{Proof of Theorem \ref{x-ASEM-convergence}}
			
			At the $k$-th iteration, we consider the product probability space $\left( \prod_{i=0}^k (\overline{\Omega} \times \Omega), \bigotimes_{i=0}^k (\overline{\mathcal{F}} \otimes \mathcal{F}), \tilde{P} \right)$, where $\times$ denotes the Cartesian product of spaces, $\otimes$ denotes the product of $\sigma$-algebras, and $\tilde{P}$ denotes the corresponding product measure \cite[Section 5]{athreya2006measure}.
			Let $\mathcal{B}_k := (T_0,S_0,\ldots,T_{k-1},S_{k-1})$ and $\overline{\mathcal{B}}_k := (T_0,S_0,\ldots,T_{k-1},S_{k-1},T_k)$ be two random variables in this probability space, where $\mathcal{B}_0$ denotes the empty sequence.  We denote the conditional expectation with respect to $\mathcal{B}_k$ and $\overline{\mathcal{B}}_k$ as 
			\[
			\mathbb{E}_k[ \cdot ] := \mathbb{E} \left[\cdot \mid \mathcal{B}_k \right]
			\]
			and
			\[
			\mathbb{E}_{k,T_k}[ \cdot ] := \mathbb{E} \left[\cdot \mid \overline{\mathcal{B}}_k \right],
			\] 
			respectively.
			Furthermore, we define
			\begin{equation}\label{Q}
				\overline{\mathcal{Q}}_k:=\{T_k \in \overline{\Omega} \mid T_k^{\top} A^{\top} z^k \neq 0\}
				\
				\text{and}
				\
				\mathcal{Q}_k:=\{S_k \in \Omega \mid S_k^{\top}(Ax^k-(b-z^{k+1}))\neq 0\}.
			\end{equation}
			It is evident that $\{\overline{\mathcal{Q}}_k, \overline{\mathcal{Q}}_k^c\}$ and $\{\mathcal{Q}_k, \mathcal{Q}_k^c\}$ form partitions of $\overline{\Omega}$ and $\Omega$, respectively. Given that $T_k \in \overline{Q}$ and $S_k\in Q$, we denote
			\[
			\mathbb{E}_{k, T_{k} \in\overline{\mathcal{Q}}} [ \cdot ]:=\mathop{\mathbb{E}} \left[\cdot \mid \mathcal{B}_{k}, T_{k} \in\overline{\mathcal{Q}} \right],
			\]
			and
			\[
			\mathbb{E}_{k, T_k, S_{k} \in \mathcal{Q}} [ \cdot ]:=\mathop{\mathbb{E}} \left[\cdot \mid \overline{\mathcal{B}}_{k}, S_{k} \in \mathcal{Q} \right].
			\]
			
			Note that the $\sigma$-algebra generated by $\mathcal{B}_k $ is a sub-$\sigma$-algebra of that generated by  $\bar{\mathcal{B}}_k $, by the tower property of conditional expectation \cite[Proposition 12.1.5 (i)]{athreya2006measure}, we have
			\begin{equation}\label{e:tower}
				\mathbb{E}_k[ \cdot ] = \mathbb{E} \left[\cdot \mid \mathcal{B}_k \right] = \mathbb{E} \left[\mathbb{E} \left[\cdot \mid \bar{\mathcal{B}}_k \right] \mid \mathcal{B}_k \right] = \mathbb{E}_k[ \mathbb{E}_{k,T_k}[ \cdot ]].
			\end{equation}
			Besides, for random variables $X$ and $Y$, if $X$ is is measurable with respect to the $\sigma$-algebra generated by $\mathcal{B}_k$, which is denoted by $\sigma \langle \mathcal{B}_k \rangle$, we have the following equations holds \cite[Proposition 12.1.5 (ii)]{athreya2006measure},
			\begin{equation}
				\label{e:measure}
				\mathbb{E} [X|\mathcal{B}_k] = X \quad \text{and} \quad \mathbb{E} [XY|\mathcal{B}_k] = X  \mathbb{E} [Y|\mathcal{B}_k].
			\end{equation} 
			Note that since $z^k$ and $x^k$ are determined only by the elements in the sequence $(T_0,S_0,$ $\ldots,$ $T_{k-1},$ $S_{k-1})$, they are measurable with respect to $\sigma\langle \mathcal{B}_k \rangle$. Similarly, $z^k$, $x^k$, $z^{k+1}$, and $x^{k+1}$ are measurable with respect to $\sigma\langle \overline{\mathcal{B}}_k \rangle$.
			
			We have the following convergence result for the auxiliary sequence $\{z^k\}_{k \geq 0}$ generated by Algorithm \ref{ASE}.
			\begin{lemma}
				\label{z-the basic method-convergence}
				Let $\{z^k\}_{k \geq 0}$ be the iteration sequence generated by Algorithm \ref{ASE}. Suppose that the probability space $(\overline{\Omega}, \overline{\mathcal{F}}, \overline{P})$ satisfies Assumption \ref{Ass}. Then
				$$
				\mathbb{E}[\|z^k-b_{\text{Range}(A)^{\perp}}\|_2^2] \leq \rho_{z}^k \|z^0-b_{\text{Range}(A)^{\perp}}\|_2^2,
				$$
				where $\overline{H}$ and $\rho_{z}$ are defined as \eqref{matrix-H-} and \eqref{rho}, respectively.
			\end{lemma}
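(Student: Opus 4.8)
The plan is to track the error vector $r^k := z^k - b_{\text{Range}(A)^{\perp}}$ and show that it contracts by the factor $\rho_z$ in expectation at each step. First I would record two structural facts. Since $z^0 \in b + \text{Range}(A)$ and every update subtracts a vector $\mu_k A T_k T_k^\top A^\top z^k \in \text{Range}(A)$, the whole sequence stays in $b + \text{Range}(A)$; as $b_{\text{Range}(A)^\perp} - b = -b_{\text{Range}(A)} \in \text{Range}(A)$, this forces $r^k \in \text{Range}(A)$ for every $k$. Moreover $b_{\text{Range}(A)^\perp} \in \text{Range}(A)^\perp = \text{Null}(A^\top)$, so $A^\top z^k = A^\top r^k$, and the recursion can be rewritten purely in terms of the error as $r^{k+1} = r^k - \mu_k A T_k T_k^\top A^\top r^k$.

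Next I would carry out the one-step computation. Writing $w := T_k^\top A^\top r^k$ and $p := A T_k w$, the identity $\langle r^k, p\rangle = \|w\|_2^2$ together with the adaptive choice $\mu_k = (2-\eta)\|w\|_2^2/\|p\|_2^2$ (valid when $w \neq 0$; when $w = 0$ we have $r^{k+1} = r^k$ and the bound below holds trivially) yields, after expanding $\|r^{k+1}\|_2^2$, the exact relation $\|r^{k+1}\|_2^2 = \|r^k\|_2^2 - \eta(2-\eta)\|w\|_2^4/\|p\|_2^2$. The elementary bound $\|p\|_2 = \|(AT_k)w\|_2 \le \|AT_k\|_2\,\|w\|_2$ then gives $\|r^{k+1}\|_2^2 \le \|r^k\|_2^2 - \eta(2-\eta)\|T_k^\top A^\top r^k\|_2^2/\|AT_k\|_2^2$. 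Here Lemma \ref{lemma-non}, applied to the consistent system $A^\top z = 0$, guarantees $w \neq 0 \iff p \neq 0$, so the step-size $\mu_k$ and the ratio are well-defined.

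I would then take the conditional expectation over $T_k$. Because $\mathbb{E}_{T}[T T^\top / \|AT\|_2^2] = \overline{H}$ by \eqref{matrix-H-}, the expected decrement becomes $(A^\top r^k)^\top \overline{H}(A^\top r^k) = \|\overline{H}^{1/2} A^\top r^k\|_2^2$. The crucial step is the spectral lower bound $\|\overline{H}^{1/2} A^\top r^k\|_2^2 \ge \sigma_{\min}^2(\overline{H}^{1/2} A^\top)\,\|r^k\|_2^2$: since $\overline{H}$ is positive definite (Lemma \ref{lemma_}), the null space of $\overline{H}^{1/2} A^\top$ equals $\text{Null}(A^\top) = \text{Range}(A)^\perp$, and $r^k \in \text{Range}(A)$ is precisely orthogonal to this null space, so the smallest \emph{nonzero} singular value legitimately governs the bound. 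Combining these estimates gives $\mathbb{E}_k[\|r^{k+1}\|_2^2] \le \rho_z \|r^k\|_2^2$ with $\rho_z = 1 - \eta(2-\eta)\sigma_{\min}^2(\overline{H}^{1/2}A^\top)$ as in \eqref{rho}.

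Finally, taking total expectations and unrolling the recursion delivers $\mathbb{E}[\|z^k - b_{\text{Range}(A)^\perp}\|_2^2] \le \rho_z^k \|z^0 - b_{\text{Range}(A)^\perp}\|_2^2$. I expect the main obstacle to be the range/nullspace bookkeeping that justifies the smallest-nonzero-singular-value bound, namely verifying $r^k \in \text{Range}(A)$ and identifying $\text{Null}(\overline{H}^{1/2}A^\top) = \text{Range}(A)^\perp$; the remainder is the standard optimal-step algebra.
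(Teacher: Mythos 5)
Your proposal is correct and follows essentially the same route as the paper's proof: the same expansion under the adaptive step-size giving the exact decrement $\eta(2-\eta)\|w\|_2^4/\|p\|_2^2$, the same bound $\overline{L}_{\text{adap}}^{(k)} \geq 1/\|AT_k\|_2^2$, the same conditional expectation producing $\|\overline{H}^{1/2}A^\top r^k\|_2^2$, and the same spectral lower bound justified by $r^k \in \text{Range}(A)$ being orthogonal to $\text{Null}(\overline{H}^{1/2}A^\top) = \text{Range}(A)^\perp$. Your reformulation in terms of the error vector $r^k$ and your trivial handling of the $T_k^\top A^\top z^k = 0$ case merely streamline the paper's explicit partition of $\overline{\Omega}$ into $\overline{\mathcal{Q}}_k$ and $\overline{\mathcal{Q}}_k^c$; the substance is identical.
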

			\begin{proof}
				Let $\overline{\mathcal{Q}}_k$ be defined as \eqref{Q} and	if the sampling matrix $T_k \in \overline{\mathcal{Q}}_k$,  we have
				\begin{align}
					\|z^{k+1}-b_{\text{Range}(A)^{\bot}}\|_2^2
					=&\|z^k-\mu_k AT_kT_k^{\top}A^{\top}z^k-b_{\text{Range}(A)^{\bot}}\|_2^2 \nonumber \\
					=&\|z^k-b_{\text{Range}(A)^{\bot}}\|_2^2-2\mu_k \langle z^k-b_{\text{Range}(A)^{\bot}}, AT_kT_k^{\top}A^{\top}z^k \rangle
					\nonumber \\
					&+\mu_k^2 \|AT_kT_k^{\top}A^{\top}z^k\|_2^2 \nonumber \\
					=&\|z^k-b_{\text{Range}(A)^{\bot}}\|_2^2-2(2-\eta)\overline{L}_{\text{adap}}^{(k)} \|T_k^{\top}A^{\top}z^k\|_2^2
					\nonumber \\
					&+(2-\eta)^2 \overline{L}_{\text{adap}}^{(k)} \|T_k^{\top}A^{\top}z^k\|_2^2 \nonumber \\
					=&\|z^k-b_{\text{Range}(A)^{\bot}}\|_2^2-\eta(2-\eta) \overline{L}_{\text{adap}}^{(k)} \|T_k^{\top}A^{\top}z^k\|_2^2 \nonumber \\
					\leq& \|z^k-b_{\text{Range}(A)^{\bot}}\|_2^2-\eta(2-\eta) \frac{\|T_k^{\top}A^{\top}z^k\|_2^2}{\|AT_k\|_2^2}, \nonumber
				\end{align}
				where the third equality follows from the definition of $\mu_k$ and $\overline{L}_{\text{adap}}^{(k)}$ in \eqref{mu} and \eqref{overline_L_adapt}, respectively, and the last inequality follows from the fact that
				$$
				\overline{L}_{\text{adap}}^{(k)}=\frac{\|T_k^{\top}A^{\top}z^k\|_2^2}{\|AT_kT_k^{\top}A^{\top}z^k\|_2^2} \geq \frac{1}{\|AT_k\|_2^2}.
				$$
				Thus
				$$
				\begin{aligned}
					&\mathbb{E}_{k, T_k \in \overline{\Omega}}[\|z^{k+1}-b_{\text{Range}(A)^{\bot}}\|_2^2] \\
					=&
					\mathbb{P}(T_k \in \overline{\mathcal{Q}}_k) \mathbb{E}_{k, T_k \in \overline{\mathcal{Q}}_k}[\|z^{k+1}-b_{\text{Range}(A)^{\bot}}\|_2^2]+\mathbb{P}(T_k \in \overline{\mathcal{Q}}_k^c) \mathbb{E}_{k, T_k \in \overline{\mathcal{Q}}_k^c}[\|z^{k+1}-b_{\text{Range}(A)^{\bot}}\|_2^2] \\
					\leq&
					\mathbb{P}(T_k \in \overline{\mathcal{Q}}_k) \mathbb{E}_{k, T_k \in \overline{\mathcal{Q}}_k}\left[\|z^k-b_{\text{Range}(A)^{\bot}}\|_2^2-\eta(2-\eta) \frac{\|T_k^{\top}A^{\top}z^k\|_2^2}{\|AT_k\|_2^2}\right]
					\\
					&+\mathbb{P}(T_k \in \overline{\mathcal{Q}}_k^c) \mathbb{E}_{k, T_k \in \overline{\mathcal{Q}}_k^c}[\|z^k-b_{\text{Range}(A)^{\bot}}\|_2^2] \\
					= & \mathbb{E}_{k, T_k \in \overline{\Omega}}[\|z^k-b_{\text{Range}(A)^{\bot}}\|_2^2] -\eta(2-\eta) \mathbb{P}(T_k \in \overline{\mathcal{Q}}_k) \mathbb{E}_{k, T_k \in \overline{\mathcal{Q}}_k}\left[\frac{\|T_k^{\top}A^{\top}z^k\|_2^2}{\|AT_k\|_2^2} \right] \\
					= & \|z^k-b_{\text{Range}(A)^{\bot}}\|_2^2 -\eta(2-\eta) \mathbb{P}(T_k \in \overline{\mathcal{Q}}_k) \mathbb{E}_{k, T_k \in \overline{\mathcal{Q}}_k}\left[\frac{\|T_k^{\top}A^{\top}z^k\|_2^2}{\|AT_k\|_2^2} \right],
				\end{aligned}
				$$
				where the third equality follows from the fact that $z^k$ is measurable with respect to $\sigma\langle\mathcal{B}_k\rangle$ and \eqref{e:measure}. Besides, note that $\mathbb{E}_{k, T_k \in \overline{\mathcal{Q}}_k^c}\left[\frac{\|T_k^{\top}A^{\top}z^k\|_2^2}{\|AT_k\|_2^2}\right]=0$ as $T_k^{\top}A^{\top}z^k=0$ for $T_k \in \overline{\mathcal{Q}}_k^c$, we have
				\[
				\begin{aligned}
					&\mathbb{P}(T_k \in \overline{\mathcal{Q}}_k) \mathbb{E}_{k, T_k \in \overline{\mathcal{Q}}_k}\left[\frac{\|T_k^{\top}A^{\top}z^k\|_2^2}{\|AT_k\|_2^2} \right] \\
					=&\mathbb{P}(T_k \in \overline{\mathcal{Q}}_k) \mathbb{E}_{k, T_k \in \overline{\mathcal{Q}}_k}\left[\frac{\|T_k^{\top}A^{\top}z^k\|_2^2}{\|AT_k\|_2^2} \right]
					+ \mathbb{P}(T_k \in \overline{\mathcal{Q}}_k^c) \mathbb{E}_{k, T_k \in \overline{\mathcal{Q}}_k^c}\left[\frac{\|T_k^{\top}A^{\top}z^k\|_2^2}{\|AT_k\|_2^2}\right] \\
					= & \mathbb{E}_{k, T_k \in \overline{\Omega}}\left[\frac{\|T_k^{\top}A^{\top}z^k\|_2^2}{\|AT_k\|_2^2}\right].
				\end{aligned}
				\]
				Therefore, 
				$$
				\begin{aligned}
					\mathbb{E}_{k, T_k \in \overline{\Omega}}[\|z^{k+1}-b_{\text{Range}(A)^{\bot}}\|_2^2]
					= & \|z^k-b_{\text{Range}(A)^{\bot}}\|_2^2 -\eta(2-\eta)\mathbb{E}_{k, T_k \in \overline{\Omega}}\left[\frac{\|T_k^{\top}A^{\top}z^k\|_2^2}{\|AT_k\|_2^2}\right] \\
					=&\|z^k-b_{\text{Range}(A)^{\bot}}\|_2^2 -\eta(2-\eta) \|\overline{H}^{\frac{1}{2}} A^{\top}z^k\|_2^2 \\
					=&\|z^k-b_{\text{Range}(A)^{\bot}}\|_2^2 -\eta(2-\eta) \|\overline{H}^{\frac{1}{2}} A^{\top}(z^k-b_{\text{Range}(A)^{\bot}})\|_2^2 \\
					\leq& \rho_{z} \|z^k-b_{\text{Range}(A)^{\perp}}\|_2^2,
				\end{aligned}
				$$
				where the second equality follows from the fact that $z^k$ is measurable with respect to $\sigma\langle\mathcal{B}_k\rangle$ and \eqref{e:measure}, 
				and the last inequality follows from $\overline{H}$ is positive definite and $z^k-b_{\text{Range}(A)^{\perp}} \in \text{Range}(A)$.
				By taking the full expectation on both sides, we have
				$$
				\mathbb{E}[\|z^k-b_{\text{Range}(A)^{\perp}}\|_2^2] \leq \rho_{z}^k \|z^0-b_{\text{Range}(A)^{\perp}}\|_2^2
				$$
				as desired.
			\end{proof}
			
			Now, we are ready to prove Theorem \ref{x-ASEM-convergence}.
			
			\begin{proof}[Proof of Theorem \ref{x-ASEM-convergence}]
				Let $\mathcal{Q}_k$ be defined as \eqref{Q} and if the sampling matrix $S_k \in \mathcal{Q}_k$, we have
				\begin{equation}
					\label{x^{k+1}_Q_k}
					\begin{aligned}
						\|x^{k+1}-A^{\dagger}b\|_2^2=& \|x^k-\alpha_kA^{\top}S_kS_k^{\top}(Ax^k-(b-z^{k+1}))-A^{\dagger}b\|_2^2
						\\
						=&\|x^k-A^{\dagger}b\|_2^2+ \alpha_k^2 \|A^{\top}S_kS_k^{\top}(Ax^k-(b-z^{k+1}))\|_2^2  \\
						&-2\alpha_k \langle S_k^{\top}A(x^k-A^{\dagger}b), S_k^{\top}(Ax^k-(b-z^{k+1})) \rangle
						\\
						=&\|x^k-A^{\dagger}b\|_2^2+ (2-\zeta) \alpha_k \|S_k^{\top}(Ax^k-(b-z^{k+1}))\|_2^2 \\
						&-2\alpha_k \langle S_k^{\top}A(x^k-A^{\dagger}b), S_k^{\top}(Ax^k-(b-z^{k+1})) \rangle
						\\
						=&\|x^k-A^{\dagger}b\|_2^2+(2-\zeta) \alpha_k\|S_k^{\top}(Ax^k-(b-z^{k+1}))\|_2^2  \\
						&+\alpha_k \left(\|S_k^{\top}(z^{k+1}-b_{\text{Range}(A)^{\bot}})\|_2^2-\|S_k^{\top}A(x^k-A^{\dagger}b)\|_2^2\right.\\
						&\left.  \ \ \ \ \ \ \ \ \ \ \ -\|S_k^{\top}(Ax^k-(b-z^{k+1}))\|_2^2\right)
						\\
						=&\|x^k-A^{\dagger}b\|_2^2-\alpha_k \|S_k^{\top}A(x^k-A^{\dagger}b)\|_2^2+\alpha_k \|S_k^{\top}(z^{k+1}-b_{\text{Range}(A)^{\bot}})\|_2^2  \\
						&-(\zeta-1) \alpha_k\|S_k^{\top}(Ax^k-(b-z^{k+1}))\|_2^2,
					\end{aligned}
				\end{equation}
				where the third equality follows from the definition of $\alpha_k$ in \eqref{alpha_basic}. For any $0 < \epsilon \leq 1$, we can get
				\begin{equation}
					\label{zeta_0,1}
					\begin{aligned}
						\|S_k^{\top}(Ax^k-(b-z^{k+1}))\|_2^2
						=&
						\|S_k^{\top}(Ax^k-AA^{\dagger}b+AA^{\dagger}b-(b-z^{k+1}))\|_2^2 \\
						=&
						\|S_k^{\top}A(x^k-A^{\dagger}b)+S_k^{\top}(z^{k+1}-b_{\text{Range}(A)^{\bot}})\|_2^2 \\
						\leq&
						(1+\epsilon) \|S_k^{\top}A(x^k-A^{\dagger}b)\|_2^2+(\epsilon^{-1}+1) \|S_k^{\top}(z^{k+1}-b_{\text{Range}(A)^{\bot}})\|_2^2,
					\end{aligned}
				\end{equation}
				and
				\begin{equation}
					\label{zeta_1,2}
					\begin{aligned}
						\|S_k^{\top}(Ax^k-(b-z^{k+1}))\|_2^2
						\geq
						(1-\epsilon) \|S_k^{\top}A(x^k-A^{\dagger}b)\|_2^2-(\epsilon^{-1}-1) \|S_k^{\top}(z^{k+1}-b_{\text{Range}(A)^{\bot}})\|_2^2. 
					\end{aligned}
				\end{equation}
				For the case where $\zeta \in (0, 1)$, substituting \eqref{zeta_0,1} into \eqref{x^{k+1}_Q_k}, we have
				\begin{equation}\nonumber
					\begin{aligned}
						\|x^{k+1}-A^{\dagger}b\|_2^2 
						\leq&
						\|x^k-A^{\dagger}b\|_2^2-\left( 1-(1+\epsilon)(1-\zeta)\right) \alpha_{k}  \left\|S_k^{\top}A(x^k-A^{\dagger}b)\right\|_2^2\\
						&+\left( 1+(\epsilon^{-1}+1)(1-\zeta)\right) \alpha_{k} \|S_k^{\top}(z^{k+1}-b_{\text{Range}(A)^{\bot}})\|_2^2\\
						=&
						\|x^k-A^{\dagger}b\|_2^2-c_{\zeta, \epsilon} L_{\text{adap}}^{(k)} \|S_k^{\top}A(x^k-A^{\dagger}b)\|_2^2
						\\
						&+d_{\zeta, \epsilon} L_{\text{adap}}^{(k)} \|S_k^{\top}(z^{k+1}-b_{\text{Range}(A)^{\bot}})\|_2^2,
					\end{aligned}
				\end{equation}
				where $c_{\zeta, \epsilon}$ and $d_{\zeta, \epsilon}$ are given by \eqref{c} and \eqref{d}, respectively.
				For the case where $\zeta \in [1,2)$, substituting \eqref{zeta_1,2} into \eqref{x^{k+1}_Q_k}, we have
				\begin{equation}\nonumber
					\begin{aligned}
						\|x^{k+1}-A^{\dagger}b\|_2^2 
						\leq&
						\|x^k-A^{\dagger}b\|_2^2-\left( 1+(1-\epsilon)(\zeta-1)\right) \alpha_{k}  \|S_k^{\top}A(x^k-A^{\dagger}b)\|_2^2\\
						&+\left( 1+(\epsilon^{-1}-1)(\zeta-1)\right) \alpha_{k} \|S_k^{\top}(z^{k+1}-b_{\text{Range}(A)^{\bot}})\|_2^2\\
						=&
						\|x^k-A^{\dagger}b\|_2^2-c_{\zeta, \epsilon} L_{\text{adap}}^{(k)}  \|S_k^{\top}A(x^k-A^{\dagger}b)\|_2^2
						\\
						&+d_{\zeta, \epsilon} L_{\text{adap}}^{(k)} \|S_k^{\top}(z^{k+1}-b_{\text{Range}(A)^{\bot}})\|_2^2.
					\end{aligned}
				\end{equation}
				Thus, for any $\zeta \in (0,2)$, it holds that
				\begin{equation}
					\label{con-exp}
					\begin{aligned}
						&\mathbb{E}_{k, T_{k}, S_k \in \Omega} [\|x^{k+1}-A^{\dagger}b\|_2^2 ] \\
						=&
						\mathbb{P}(S_k \in \mathcal{Q}_k^c) \mathbb{E}_{k, T_{k}, S_k \in \mathcal{Q}_k^c} [\|x^{k+1}-A^{\dagger}b\|_2^2 ]+\mathbb{P}(S_k \in \mathcal{Q}_k) \mathbb{E}_{k, T_{k}, S_k \in \mathcal{Q}_k} [\|x^{k+1}-A^{\dagger}b\|_2^2 ]
						\\
						\leq&
						\mathbb{P}(S_k \in \mathcal{Q}_k^c) \mathbb{E}_{k, T_{k}, S_k \in \mathcal{Q}_k^c} [\|x^k-A^{\dagger}b\|_2^2 ]+\mathbb{P}(S_k \in \mathcal{Q}_k) \mathbb{E}_{k, T_{k}, S_k \in \mathcal{Q}_k} [ \|x^k-A^{\dagger}b\|_2^2 ]  \\
						&-c_{\zeta, \epsilon} \mathbb{P}(S_k \in \mathcal{Q}_k) \mathbb{E}_{k, T_{k}, S_k \in \mathcal{Q}_k}\left[L_{\text{adap}}^{(k)}  \|S_k^{\top}A(x^k-A^{\dagger}b)\|_2^2 \right] \\
						&+d_{\zeta, \epsilon} \mathbb{P}(S_k \in \mathcal{Q}_k) \mathbb{E}_{k, T_{k}, S_k \in \mathcal{Q}_k}\left[L_{\text{adap}}^{(k)} \|S_k^{\top}(z^{k+1}-b_{\text{Range}(A)^{\bot}})\|_2^2 \right]\\
						=&\|x^k-A^{\dagger}b\|_2^2
						-c_{\zeta, \epsilon}
						\underbrace{\mathbb{P}(S_k \in \mathcal{Q}_k) \mathbb{E}_{k, T_{k}, S_k \in \mathcal{Q}_k}[L_{\text{adap}}^{(k)} \|S_k^{\top}A(x^k-A^{\dagger}b)\|_2^2 ]}_{\textcircled{a}}\\
						&+ d_{\zeta, \epsilon}
						\underbrace{\mathbb{P}(S_k \in \mathcal{Q}_k) \mathbb{E}_{k, T_{k}, S_k \in \mathcal{Q}_k}[L_{\text{adap}}^{(k)} \|S_k^{\top}(z^{k+1}-b_{\text{Range}(A)^{\bot}})\|_2^2 ]}_{\textcircled{b}},
					\end{aligned}
				\end{equation}
				where the last equality follows from the fact that $x^{k}$ is measurable with respect to $\sigma\langle \overline{\mathcal{B}}_k \rangle$ and \eqref{e:measure}. Next, we analyze the two expressions $\textcircled{a}$ and $\textcircled{b}$, respectively. Since
				$$
				L_{\text{adap}}^{(k)}=\frac{\left\|S_k^{\top}\left(Ax^k-(b-z^{k+1})\right)\right\|_2^2}{\left\|A^{\top}S_kS_k^{\top}\left(Ax^k-(b-z^{k+1})\right)\right\|_2^2}\geq \frac{1}{\|A^\top S_k\|_2^2},
				$$
				we can establish a lower bound for the first expression as
				\begin{equation}
					\label{S_Q}
					\begin{aligned}
						\textcircled{a}
						\geq&
						\mathbb{P}(S_k \in \mathcal{Q}_k) \mathbb{E}_{k, T_{k}, S_k \in \mathcal{Q}_k}\left[\frac{\left\|S_k^{\top}A(x^k-A^{\dagger}b)\right\|_2^2}{\|A^\top S_k\|_2^2} \right] \\  
						=&
						\mathbb{E}_{k, T_{k}, S_k \in \Omega}\left[\frac{\left\|S_k^{\top}A(x^k-A^{\dagger}b)\right\|_2^2}{\|A^\top S_k\|_2^2} \right]-\mathbb{P}(S_k \in \mathcal{Q}_k^{c}) \mathbb{E}_{k, T_{k}, S_k \in \mathcal{Q}_k^c}\left[\frac{\left\|S_k^{\top}A(x^k-A^{\dagger}b)\right\|_2^2}{\|A^\top S_k\|_2^2} \right]
						\\
						\geq&
						\sigma_{\min}^2(H^{\frac{1}{2}}A)\|x^k-A^{\dagger}b\|_2^2-\mathbb{P}(S_k \in \mathcal{Q}_k^{c}) \mathbb{E}_{k, T_{k}, S_k \in \mathcal{Q}_k^c}\left[\frac{\left\|S_k^{\top}A(x^k-A^{\dagger}b)\right\|_2^2}{\|A^\top S_k\|_2^2} \right], 
					\end{aligned}
				\end{equation}
				where the last inequality follows from $H=\mathop{\mathbb{E}}\limits_{S \in \Omega}\left[SS^{\top} / \left\|A^{\top} S\right\|_2^2\right]$ is positive definite and $x^k-A^{\dag}b \in \text{Range}(A^\top)$. Furthermore, note that if $S_k \in\mathcal{Q}_k^c$, i.e. $S_k^{\top}\left(Ax^k-(b-z^{k+1})\right)=0$, it holds that
				\begin{equation}\nonumber
					\begin{aligned}
						S_k^{\top}A(x^k-A^{\dagger}b)
						=&
						S_k^{\top}\left(Ax^k-(b-z^{k+1})+(b-z^{k+1})-AA^{\dagger}b\right)\\
						=&
						S_k^{\top}\left(Ax^k-(b-z^{k+1})\right)+S_k^{\top}(b_{\text{Range}(A)^{\bot}}-z^{k+1})\\
						=&
						S_k^{\top}(b_{\text{Range}(A)^{\bot}}-z^{k+1}).
					\end{aligned}
				\end{equation}
				Substituting it into \eqref{S_Q}, we can get
				\begin{equation}\label{S_Q_1}
					\begin{aligned}
						\textcircled{a}
						\geq&
						\sigma_{\min}^2(H^{\frac{1}{2}}A)\|x^k-A^{\dagger}b\|_2^2-\mathbb{P}(S_k \in \mathcal{Q}_k^{c}) \mathbb{E}_{k, T_{k}, S_k \in \mathcal{Q}_k^c}\left[\frac{\left\|S_k^{\top}(z^{k+1}-b_{\text{Range}(A)^{\bot}})\right\|_2^2}{\|A^\top S_k\|_2^2} \right].
					\end{aligned}
				\end{equation}
				In addition, since
				$$
				\begin{aligned}
					L_{\text{adap}}^{(k)}
					&=
					\frac{\left\|S_k^{\top}\left(Ax^k-(b-z^{k+1})\right)\right\|_2^2}{\left\|A^{\top}S_kS_k^{\top}\left(Ax^k-(b-z^{k+1})\right)\right\|_2^2} =
					\frac{\left\|S_k^{\top}A\left(x^k-A^{\dagger}(b-z^{k+1})\right)\right\|_2^2}{\left\|A^{\top}S_kS_k^{\top}A\left(x^k-A^{\dagger}(b-z^{k+1})\right)\right\|_2^2} \\
					&\leq
					\frac{1}{\lambda_{\min}\left(\frac{A^\top S_kS_k^\top A}{\|A^\top S_k\|_2^2}\right)\|A^\top S_k\|_2^2} \leq \frac{1}{\Lambda_{\min}\|A^\top S_k\|_2^2},
				\end{aligned}
				$$
				where the second equality follows from the fact that $b-z^{k+1}=AA^{\dagger}(b-z^{k+1})$ as $b-z^{k+1} \in \text{Range}(A)$. Hence, the second expression can be bounded as
				\begin{equation}\label{S_Q_2}
					\begin{aligned}
						\textcircled{b}
						\leq
						\frac{1}{\Lambda_{\min}} \mathbb{P}(S_k \in \mathcal{Q}_k) \mathbb{E}_{k, T_{k}, S_k \in \mathcal{Q}_k}\left[ \frac{\|S_k^{\top}(z^{k+1}-b_{\text{Range}(A)^{\bot}})\|_2^2}{\|A^\top S_k\|_2^2} \right].
					\end{aligned}
				\end{equation}
				By the assumptions of this theorem, specifically, if $\zeta \in (0, \frac{1}{2}]$ and $\epsilon \in (0, \frac{\zeta}{1-\zeta})$, or $\zeta \in (\frac{1}{2}, 2)$ and $\epsilon \in (0, 1]$,
				it follows  that $c_{\zeta, \epsilon}> 0$. Furthermore,  for any $\zeta \in (0, 2)$ and $\epsilon \in (0, 1]$, we have $d_{\zeta, \epsilon}>0$. Hence,
				by substituting \eqref{S_Q_1} and \eqref{S_Q_2} into \eqref{con-exp}, and noting that $\rho_{x}$ is defined as \eqref{v}, we have
				\begin{equation}\label{x_Omega}
					\begin{aligned}
						\mathbb{E}_{k, T_{k}, S_k \in \Omega}[\|x^{k+1}-A^{\dagger}b\|_2^2]
						\leq&
						\rho_{x} \|x^k-A^{\dagger}b\|_2^2 \\
						&+c_{\zeta, \epsilon} \mathbb{P}(S_k \in \mathcal{Q}_k^{c}) \mathbb{E}_{k, T_{k}, S_k \in \mathcal{Q}_k^c}\left[\frac{\|S_k^{\top}(z^{k+1}-b_{\text{Range}(A)^{\bot}})\|_2^2}{\|A^\top S_k\|_2^2} \right]\\
						&+\frac{d_{\zeta, \epsilon}}{\Lambda_{\min}} \mathbb{P}(S_k \in \mathcal{Q}_k) \mathbb{E}_{k, T_{k}, S_k \in \mathcal{Q}_k}\left[\frac{\|S_k^{\top}(z^{k+1}-b_{\text{Range}(A)^{\bot}})\|_2^2}{\|A^\top S_k\|_2^2} \right]
						\\
						\leq&
						\rho_{x} \|x^k-A^{\dagger}b\|_2^2+\frac{d_{\zeta, \epsilon}}{\Lambda_{\min}} \mathbb{E}_{k, T_{k}, S_k \in \Omega}\left[\frac{\|S_k^{\top}(z^{k+1}-b_{\text{Range}(A)^{\bot}})\|_2^2}{\|A^\top S_k\|_2^2} \right]
						\\
						\leq& \rho_{x} \|x^k-A^{\dagger}b\|_2^2+ \frac{d_{\zeta, \epsilon} \lambda_{\max}(H)}{\Lambda_{\min}} \|z^{k+1}-b_{\text{Range}(A)^{\bot}}\|_2^2,
					\end{aligned}
				\end{equation}
				where the second inequality follows from the facts that $c_{\zeta, \epsilon} \leq d_{\zeta, \epsilon}$ and $\Lambda_{\min} \leq 1$, and the last inequality holds since $z^{k+1}$ is measurable with respect to $\sigma\langle \overline{\mathcal{B}}_k \rangle$. In total,
				\begin{align*}
					\mathbb{E}_k\left[\|x^{k+1}-A^{\dagger}b\|_2^2\right]=&\mathbb{E}_{k, T_{k} \in \overline{\Omega}} \left[\mathbb{E}_{k, T_{k}, S_k \in \Omega}\left[\|x^{k+1}-A^{\dagger}b\|_2^2\right]\right]
					\\
					\leq & \rho_{x}  \mathbb{E}_{k, T_{k} \in \overline{\Omega}} \|x^k-A^{\dagger}b\|_2^2+ \frac{d_{\zeta, \epsilon} \lambda_{\max}(H)}{\Lambda_{\min}} \mathbb{E}_{k, T_{k} \in \overline{\Omega}}\left[\|z^{k+1}-b_{\text{Range}(A)^{\bot}}\|_2^2\right] \\
					=& \rho_{x} \|x^k-A^{\dagger}b\|_2^2+ \frac{d_{\zeta, \epsilon} \lambda_{\max}(H)}{\Lambda_{\min}} \mathbb{E}_{k, T_{k} \in \overline{\Omega}}\left[\|z^{k+1}-b_{\text{Range}(A)^{\bot}}\|_2^2\right],
				\end{align*}
				where the first equality follows from \eqref{e:tower}, and the last equality holds because $x^k$ is measurable with respect to $\sigma\langle \mathcal{B}_k \rangle$.
				Together with Lemma \ref{z-the basic method-convergence}, we have
				\begin{equation}
					\label{x_ASE_con}
					\begin{aligned}
						\mathbb{E}\left[\|x^{k+1}-A^{\dagger}b\|_2^2\right]
						\leq&
						\rho_{x} \mathbb{E}\left[\|x^k-A^{\dagger}b\|_2^2\right]+\frac{d_{\zeta, \epsilon} \lambda_{\max}(H)}{\Lambda_{\min}} \mathbb{E}\left[\|z^{k+1}-b_{\text{Range}(A)^{\bot}}\|_2^2\right]
						\\
						\leq&
						\rho_{x} \mathbb{E}\left[\|x^k-A^{\dagger}b\|_2^2\right]+\frac{d_{\zeta, \epsilon} \lambda_{\max}(H) \rho_{z}^{k+1}}{\Lambda_{\min}} \|z^0-b_{\text{Range}(A)^{\bot}}\|_2^2
						\\
						\leq& \rho_{x}^2 \mathbb{E}\left[\|x^{k-1}-A^{\dagger}b\|_2^2\right]  \\
						&+ \frac{d_{\zeta, \epsilon} \lambda_{\max}(H)}{\Lambda_{\min}} \|z^0-b_{\text{Range}(A)^{\bot}}\|_2^2 \left(\rho_{z}^k \rho_{x} +\rho_{z}^{k+1}\right)
						\\
						\leq& \rho_{x}^{k+1} \|x^0-A^{\dagger}b\|_2^2+\frac{d_{\zeta, \epsilon} \lambda_{\max}(H)}{\Lambda_{\min}} \|z^0-b_{\text{Range}(A)^{\bot}}\|_2^2 \sum\limits_{i=0}^k \rho_{z}^{k+1-i} \rho_{x}^i. 
					\end{aligned}
				\end{equation}
				For the case where $\rho_{x} \leq \rho_{z}$, we have
				$$
				\sum\limits_{i=0}^k \rho_{z}^{k+1-i} \rho_{x}^i=\rho_{z}^{k+1} \sum\limits_{i=0}^k \left(\frac{\rho_{x}}{\rho_{z}}\right)^i \leq \frac{\rho_{z}^{k+1}}{\max\left\{1-\frac{\rho_{x}}{\rho_{z}}, \frac{1}{k+1}\right\}}=\frac{\rho^{k+1}}{\max\left\{\left|1-\frac{\rho_{x}}{\rho_{z}}\right|, \frac{1}{k+1}\right\}},
				$$
				where $\rho:=\max\{\rho_{x}, \rho_{z}\}$. In addtion, for the case where $\rho_{z} < \rho_{x}$, we have
				$$
				\sum\limits_{i=0}^k \rho_{z}^{k+1-i} \rho_{x}^i=\rho_{x}^{k+1} \sum\limits_{i=0}^k \left(\frac{\rho_{z}}{\rho_{x}}\right)^{k+1-i} \leq \frac{\rho_{x}^{k+1}}{\max\left\{\frac{\rho_{x}}{\rho_{z}}-1, \frac{1}{k+1}\right\}} \leq \frac{\rho^{k+1}}{\max\left\{\left|1-\frac{\rho_{x}}{\rho_{z}}\right|, \frac{1}{k+1}\right\}}.
				$$
				Hence the inequality $\sum\limits_{i=0}^k \rho_{z}^{k+1-i} \rho_{x}^i \leq \frac{\rho^{k+1}}{\max\left\{\left|1-\frac{\rho_{x}}{\rho_{z}}\right|, \frac{1}{k+1}\right\}}$ holds for any $\rho_{z}, \rho_{x} \in (0, 1)$, which together with \eqref{x_ASE_con} implies that
				\begin{align*}
					\mathbb{E}\left[\|x^{k+1}-A^{\dagger}b\|_2^2\right]
					\leq&
					\rho_{x}^{k+1} \|x^0-A^{\dagger}b\|_2^2+\frac{d_{\zeta, \epsilon} \lambda_{\max}(H)}{\Lambda_{\min}} \|z^0-b_{\text{Range}(A)^{\bot}}\|_2^2 \sum\limits_{i=0}^k \rho_{z}^{k+1-i} \rho_{x}^i \\
					\leq&
					\rho^{k+1} \left(\|x^0-A^{\dagger}b\|_2^2+\frac{d_{\zeta, \epsilon} \lambda_{\max}(H)}{\Lambda_{\min}\max\left\{\left|1-\frac{\rho_{x}}{\rho_{z}}\right|, \frac{1}{k+1}\right\}} \|z^0-b_{\text{Range}(A)^{\bot}}\|_2^2\right).
				\end{align*}
				This completes the proof of this theorem.
			\end{proof}
			
			\subsection{Proof of Theorem \ref{the3}}
			
			For convenience, we denote
			$$
			\overline{z}^{k+1}=z^k-\overline{L}_{\text{adap}}^{(k)} AT_kT_k^{\top}A^{\top}z^k, \quad
			\hat{x}^{k+1}=x^k-L_{\text{adap}}^{(k)} A^{\top} S_k S_k^{\top}(Ax^k-(b-z^{k+1})),
			$$
			where $\overline{L}_{\text{adap}}^{(k)}$ and $L_{\text{adap}}^{(k)}$ is given by \eqref{overline_L_adapt} and \eqref{L_adapt}, respectively. We set
			$$
			\overline{\mathcal{Q}}_{k, \neq}:=\{T_k \in \overline{\Omega} \mid \Vert p^k \Vert_2^2 \Vert z^k-z^{k-1} \Vert_2^2
			-\langle p^k,  z^k-z^{k-1}\rangle^2 \neq 0\},
			$$
			and
			$$
			\mathcal{Q}_{k, \neq}:=
			\{S_k \in \Omega \mid \Vert q^k \Vert_2^2 \Vert x^k-x^{k-1} \Vert_2^2
			-\langle q^k,  x^k-x^{k-1}\rangle^2 \neq 0\},
			$$
			where $p^k:=AT_kT_k^{\top}A^{\top}z^k$ and $q^k:=A^{\top}S_kS_k^{\top}(Ax^k-(b-z^{k+1}))$.

			Firstly,  we can
			derive the following convergence result for the iteration sequence $\{z^k\}_{k \geq 0}$ generated by Algorithm \ref{ASEHBM}.
			\begin{lemma}
				\label{z-ASHBM-convergence}
				Let $\{z^k\}_{k \geq 0}$ be the iteration sequence generated by Algorithm \ref{ASEHBM}.  Suppose that the probability space $(\overline{\Omega}, \overline{\mathcal{F}}, \overline{P})$ satisfies Assumption \ref{Ass}. Then
				$$
				\mathbb{E}[\|z^k-b_{\text{Range}(A)^{\perp}}\|_2^2] \leq \left(1-\sigma_{\min}^2(\overline{H}^{\frac{1}{2}}A^{\top})\right)^k \|z^0-b_{\text{Range}(A)^{\perp}}\|_2^2,
				$$
				where $\overline{H}$ is defined as \eqref{matrix-H-}.
			\end{lemma}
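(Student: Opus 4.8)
The plan is to show that the adaptive momentum step can only decrease the distance to the target $b_{\text{Range}(A)^{\perp}}$ compared with the non-momentum (basic) step $\overline{z}^{k+1}$, and then to invoke the per-step estimate already established in the proof of Lemma \ref{z-the basic method-convergence} with $\eta=1$. First I would fix the realization of $T_k$ and compare $z^{k+1}$ with $\overline{z}^{k+1}=z^k-\overline{L}_{\text{adap}}^{(k)}AT_kT_k^{\top}A^{\top}z^k$. Observe that $\overline{z}^{k+1}$ lies on the affine plane $\mathcal{H}_k:=z^k+\text{Span}\{p^k,z^k-z^{k-1}\}$, since $\overline{z}^{k+1}-z^k=-\overline{L}_{\text{adap}}^{(k)}p^k\in\text{Span}\{p^k,z^k-z^{k-1}\}$. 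In the degenerate case where $p^k$ and $z^k-z^{k-1}$ are parallel, Stage I sets $\omega_k=0$ and $\mu_k$ as in \eqref{mu} with $\eta=1$, so that $z^{k+1}=\overline{z}^{k+1}$ exactly. In the generic case, the parameters $\mu_k^{\operatorname{opt}},\omega_k^{\operatorname{opt}}$ in \eqref{mu_omega_HB1} are, by construction, the minimizers of $\|z^{k+1}-b_{\text{Range}(A)^{\perp}}\|_2$ over $\mathcal{H}_k$; since $\overline{z}^{k+1}\in\mathcal{H}_k$, this yields the pointwise bound
$$
\|z^{k+1}-b_{\text{Range}(A)^{\perp}}\|_2^2\le\|\overline{z}^{k+1}-b_{\text{Range}(A)^{\perp}}\|_2^2
$$
for every $T_k$.

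Next, having controlled the momentum iterate by the basic one, I would reuse the computation in the proof of Lemma \ref{z-the basic method-convergence} verbatim with $\eta=1$ (so that $\eta(2-\eta)=1$). Partitioning $\overline{\Omega}$ into $\overline{\mathcal{Q}}_k$ and $\overline{\mathcal{Q}}_k^c$, using $\overline{L}_{\text{adap}}^{(k)}\ge 1/\|AT_k\|_2^2$, the identity $A^{\top}b_{\text{Range}(A)^{\perp}}=0$, the positive definiteness of $\overline{H}$ (Lemma \ref{lemma_}), and $z^k-b_{\text{Range}(A)^{\perp}}\in\text{Range}(A)$, this gives
$$
\mathbb{E}_{k,T_k\in\overline{\Omega}}\left[\|\overline{z}^{k+1}-b_{\text{Range}(A)^{\perp}}\|_2^2\right]\le\left(1-\sigma_{\min}^2(\overline{H}^{\frac{1}{2}}A^{\top})\right)\|z^k-b_{\text{Range}(A)^{\perp}}\|_2^2.
$$
Combining with the pointwise bound (which survives the conditional expectation, being valid for each $T_k$), taking the full expectation, and noting that the first step $k=0$ is covered as well since $z^1$ is produced by Algorithm \ref{ASE} with $\eta=1$, I obtain the one-step contraction $\mathbb{E}[\|z^{k+1}-b_{\text{Range}(A)^{\perp}}\|_2^2]\le\hat{\rho}_z\,\mathbb{E}[\|z^k-b_{\text{Range}(A)^{\perp}}\|_2^2]$ with $\hat{\rho}_z=1-\sigma_{\min}^2(\overline{H}^{\frac{1}{2}}A^{\top})$. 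Iterating over $k$ then yields the claimed bound.

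The hard part will be the pointwise inequality, namely justifying that the closed-form parameters \eqref{mu_omega_HB1} really realize the projection of $b_{\text{Range}(A)^{\perp}}$ onto $\mathcal{H}_k$ even though $b_{\text{Range}(A)^{\perp}}$ is not known. This rests on two identities that make the minimization independent of the unknown target: the orthogonality relation $\langle z^k-z^{k-1},\,z^k-b_{\text{Range}(A)^{\perp}}\rangle=0$ (the $z$-analogue of the relation used for the $x$-sequence), and $\langle z^k-b_{\text{Range}(A)^{\perp}},\,p^k\rangle=\langle z^k,\,AT_kT_k^{\top}A^{\top}z^k\rangle=\|T_k^{\top}A^{\top}z^k\|_2^2$, which follows from $A^{\top}b_{\text{Range}(A)^{\perp}}=0$. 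I would verify these first (as in \cite[Section 4]{zeng2023adaptive}) so that solving the normal equations of the two-parameter least-squares problem reproduces \eqref{mu_omega_HB1} and legitimizes the projection characterization; the remaining steps are then routine.
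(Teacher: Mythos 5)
Your proposal is correct and follows essentially the same route as the paper's proof: a pointwise comparison $\|z^{k+1}-b_{\text{Range}(A)^{\perp}}\|_2^2\le\|\overline{z}^{k+1}-b_{\text{Range}(A)^{\perp}}\|_2^2$ (with equality in the degenerate and $T_k^{\top}A^{\top}z^k=0$ cases), followed by reusing the one-step contraction from Lemma \ref{z-the basic method-convergence} with $\eta=1$ and iterating. The only difference is cosmetic: the paper splits $\overline{\Omega}$ into the three explicit cases $\overline{\mathcal{Q}}_{k,\neq}$, $\overline{\mathcal{Q}}_k\backslash\overline{\mathcal{Q}}_{k,\neq}$, $\overline{\mathcal{Q}}_k^c$ and delegates the projection characterization of \eqref{mu_omega_HB1} to \cite[Section 4]{zeng2023adaptive}, whereas you verify the two identities underlying it explicitly — both of which are indeed valid.
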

			
			\begin{proof}[Proof of Lemma \ref{z-ASHBM-convergence}]
				Since $\{\overline{\mathcal{Q}}_{k, \neq}, \overline{\mathcal{Q}}_k \backslash \overline{\mathcal{Q}}_{k, \neq}, \overline{\mathcal{Q}}_k^c\}$ forms a partition of $\overline{\Omega}$, hence the sampling matrix $T_k \in \overline{\Omega}$ can be classified into the following three cases: $T_k \in \overline{\mathcal{Q}}_{k, \neq}$, $T_k \in \overline{\mathcal{Q}}_k \backslash \overline{\mathcal{Q}}_{k, \neq}$, and $T_k \in \overline{\mathcal{Q}}_k^c$.
				
				{\bf Case 1} $(T_k \in \overline{\mathcal{Q}}_{k, \neq})$: If the sampling matrix $T_k \in \overline{\mathcal{Q}}_{k, \neq}$, according to the definition of $z^{k+1}$, we have
				$$
				\|z^{k+1}-b_{\text{Range}(A)^{\bot}}\|_2^2 \leq \|\overline{z}^{k+1}-b_{\text{Range}(A)^{\bot}}\|_2^2.
				$$
				
				{\bf Case 2} $(T_k \in \overline{\mathcal{Q}}_k \backslash \overline{\mathcal{Q}}_{k, \neq})$: If the sampling matrix $T_k \in \overline{\mathcal{Q}}_k \backslash \overline{\mathcal{Q}}_{k, \neq}$, we have $z^{k+1}=\overline{z}^{k+1}$.
				
				{\bf Case 3} $(T_k \in \overline{\mathcal{Q}}_k^c)$: If the sampling matrix $T_k \in \overline{\mathcal{Q}}_k^c$, we have $z^{k+1}=z^k$.
				
				Therefore, it holds that
					$$
					\begin{aligned}
						&\mathbb{E}_{k, T_k \in \overline{\Omega}}[\|z^{k+1}-b_{\text{Range}(A)^{\bot}}\|_2^2] \\
						\leq&
						\mathbb{P}(T_k \in \overline{\mathcal{Q}}_{k, \neq}) \mathbb{E}_{k, T_k \in \overline{\mathcal{Q}}_{k, \neq}}[\|\overline{z}^{k+1}-b_{\text{Range}(A)^{\bot}}\|_2^2]
						\\
						&+\mathbb{P}(T_k \in \overline{\mathcal{Q}}_k \backslash \overline{\mathcal{Q}}_{k, \neq}) \mathbb{E}_{k, T_k \in \overline{\mathcal{Q}}_k \backslash \overline{\mathcal{Q}}_{k, \neq}}[\|\overline{z}^{k+1}-b_{\text{Range}(A)^{\bot}}\|_2^2] \\
						&+\mathbb{P}(T_k \in \overline{\mathcal{Q}}_k^c) \mathbb{E}_{k, T_k \in \overline{\mathcal{Q}}_k^c}[\|z^k-b_{\text{Range}(A)^{\bot}}\|_2^2] \\
						\leq&
						\mathbb{P}(T_k \in \overline{\mathcal{Q}}_k) \mathbb{E}_{k, T_k \in \overline{\mathcal{Q}}_k}[\|\overline{z}^{k+1}-b_{\text{Range}(A)^{\bot}}\|_2^2]+\mathbb{P}(T_k \in \overline{\mathcal{Q}}_k^c)\mathbb{E}_{k, T_k \in \overline{\mathcal{Q}}_k^c}[\|z^k-b_{\text{Range}(A)^{\bot}}\|_2^2] \\
						\leq&
						\left(1- \sigma_{\min}^2(\overline{H}^{\frac{1}{2}}A^{\top})\right) \|z^k-b_{\text{Range}(A)^{\perp}}\|_2^2,
					\end{aligned}
					$$
				where the last inequality from Lemma \ref{z-the basic method-convergence}.
				By taking the full expectation on both sides, we have
				$$
				\mathbb{E}[\|z^k-b_{\text{Range}(A)^{\perp}}\|_2^2] \leq \left(1- \sigma_{\min}^2(\overline{H}^{\frac{1}{2}}A^{\top})\right)^k \|z^0-b_{\text{Range}(A)^{\perp}}\|_2^2
				$$
				as desired.
			\end{proof}
			
			Now we are ready to prove Theorem \ref{the3}
			
			\begin{proof}[Proof of Theorem \ref{the3}]
				Since $\{\mathcal{Q}_{k, \neq}, \mathcal{Q}_k \backslash \mathcal{Q}_{k, \neq}, \mathcal{Q}_k^c\}$ forms a partition of $\Omega$, the sampling matrix $S_k \in \Omega$ can be classified into the following three cases: $S_k \in \mathcal{Q}_{k, \neq}$, $S_k \in \mathcal{Q}_k \backslash \mathcal{Q}_{k, \neq}$, and $S_k \in \mathcal{Q}_k^c$.
				
				{ \bf Case 1} $(S_k \in \mathcal{Q}_{k, \neq})$: Let $\tilde{x}^{k+1}$ denote the orthogonal projection of $A^{\dagger}b$ onto the affine set $\Pi_k=x^k+\text{Span}\{q_k, x^k-x^{k-1}\}$ and if the sampling matrix $S_k \in \mathcal{Q}_{k, \neq}$, we have $\langle \tilde{x}^{k+1}-A^{\dagger}b, \tilde{x}^{k+1}-x^{k+1} \rangle=0$. In addition, since $\hat{x}^{k+1}:=x^k- L_{\text{adap}}^{(k)} A^{\top}S_k S_k^{\top}(Ax^k-(b-z^{k+1})) \in \Pi_k$, we can get
				\begin{equation}
					\label{x_Q_k_neq}
					\begin{aligned}
						\Vert x^{k+1}-A^{\dagger}b  \Vert_2^2
						=&
						\Vert (x^{k+1}-\tilde{x}^{k+1})+(\tilde{x}^{k+1}-A^{\dagger}b) \Vert_2^2 \\
						=&
						\Vert \tilde{x}^{k+1}-A^{\dagger}b \Vert_2^2+\Vert x^{k+1}-\tilde{x}^{k+1} \Vert_2^2 \\
						\leq&
						\Vert \hat{x}^{k+1}-A^{\dagger}b \Vert_2^2+\Vert x^{k+1}-\tilde{x}^{k+1} \Vert_2^2.
					\end{aligned}
				\end{equation}
				According to the definitions of $x^{k+1}$ and $\tilde{x}^{k+1}$, it can be observed that the vector $x^{k+1}-\tilde{x}^{k+1}$ represents the orthogonal projection of the vector $A^{\dagger}(b-z^{k+1})-A^{\dagger}b=A^{\dagger}(b_{Range(A)^{\bot}}-z^{k+1})$ onto the affine set $\Pi_k$. Hence,
				$$
				\Vert x^{k+1}-\tilde{x}^{k+1} \Vert_2^2 \leq \Vert A^{\dagger}(z^{k+1}-b_{Range(A)^{\bot}}) \Vert_2^2 \leq \frac{\Vert z^{k+1}-b_{Range(A)^{\bot}} \Vert_2^2}{\sigma_{\min}^2(A)}.
				$$
				Substituting it into \eqref{x_Q_k_neq}, we can get
				\begin{align*}
					\Vert x^{k+1}-A^{\dagger}b  \Vert_2^2 \leq \Vert \hat{x}^{k+1}-A^{\dagger}b \Vert_2^2+\frac{\Vert z^{k+1}-b_{Range(A)^{\bot}} \Vert_2^2}{\sigma_{\min}^2(A)}.
				\end{align*}
				
				{\bf Case 2} $(S_k \in \mathcal{Q}_k \backslash \mathcal{Q}_{k, \neq})$: If the sampling matrix $S_k \in \mathcal{Q}_k \backslash \mathcal{Q}_{k, \neq}$, it holds that $x^{k+1}=\hat{x}^{k+1}$.
				
				{\bf Case 3} $(S_k \in \mathcal{Q}_k^c)$: If the sampling matrix $S_k \in \mathcal{Q}_k^c$, it holds that $x^{k+1}=x^k$.
				
				Recall $\hat{\rho}_{x}$ and $\gamma$ are defined as \eqref{rho_z} and \eqref{gamma}, respectively, we can obtain
				$$
				\begin{aligned}
					&\mathbb{E}_{k, T_{k}, S_k \in \Omega}\left[\|x^{k+1}-A^{\dagger}b\|_2^2\right] \\
					=&
					\mathbb{P}(S_k \in \mathcal{Q}_{k, \neq}) \mathbb{E}_{k, T_{k}, S_k \in \mathcal{Q}_{k, \neq}}\left[\|x^{k+1}-A^{\dagger}b\|_2^2\right] \\
					&+\mathbb{P}(S_k \in \mathcal{Q}_k \backslash \mathcal{Q}_{k, \neq}) \mathbb{E}_{k, T_{k}, S_k \in \mathcal{Q}_k \backslash \mathcal{Q}_{k, \neq}}\left[\|x^{k+1}-A^{\dagger}b\|_2^2\right] \\
					&+\mathbb{P}(S_k \in \mathcal{Q}_k^c) \mathbb{E}_{k, T_{k}, S_k \in \mathcal{Q}_k^c}\left[\|x^{k+1}-A^{\dagger}b\|_2^2\right] \\
					\leq&
					\mathbb{P}(S_k \in \mathcal{Q}_{k}) \mathbb{E}_{k, T_{k}, S_k \in \mathcal{Q}_k}\left[\|\hat{x}^{k+1}-A^{\dagger}b\|_2^2\right]+\mathbb{P}(S_k \in \mathcal{Q}_k^c) \mathbb{E}_{k, T_{k}, S_k \in \mathcal{Q}_k^c}\left[\|x^k-A^{\dagger}b\|_2^2\right] \\
					&+\mathbb{P}(S_k \in \mathcal{Q}_{k, \neq}) \frac{\Vert z^{k+1}-b_{Range(A)^{\bot}} \Vert_2^2}{\sigma_{\min}^2(A)} \\
					\leq&
					\hat{\rho}_x \|x^k-A^{\dagger}b\|_2^2+ \frac{\lambda_{\max}(H)}{\Lambda_{\min}} \|z^{k+1}-b_{\text{Range}(A)^{\bot}}\|_2^2+\frac{\Vert z^{k+1}-b_{Range(A)^{\bot}} \Vert_2^2}{\sigma_{\min}^2(A)} \\
					=&
					\hat{\rho}_x \|x^k-A^{\dagger}b\|_2^2+ \gamma \|z^{k+1}-b_{\text{Range}(A)^{\bot}}\|_2^2,
				\end{aligned}
				$$
				where the second inequality follows from the definition of $\hat{x}^{k+1}$ and \eqref{x_Omega}. Then, according to Lemma \ref{z-ASHBM-convergence} and the proof of Theorem \ref{x-ASEM-convergence}, define $\hat{\rho}:=\max\{\hat{\rho}_{z}, \hat{\rho}_{x}\}$, we have
				\begin{align*}
					\mathbb{E}\left[\|x^{k+1}-A^{\dagger}b\|_2^2\right]
					\leq&
					\hat{\rho}_{x}^{k+1} \|x^0-A^{\dagger}b\|_2^2+ \gamma \|z^0-b_{\text{Range}(A)^{\bot}}\|_2^2 \sum\limits_{i=0}^k \hat{\rho}_{z}^{k+1-i} \hat{\rho}_{x}^i \\
					\leq&
					\hat{\rho}^{k+1} \left(\|x^0-A^{\dagger}b\|_2^2+\frac{\gamma}{\max\left\{\left|1-\frac{\hat{\rho}_{x}}{\hat{\rho}_{z}}\right|, \frac{1}{k+1}\right\}} \|z^0-b_{\text{Range}(A)^{\bot}}\|_2^2\right).
				\end{align*}
				This completes the proof of this theorem.
			\end{proof}
				
			\end{CJK}
		\end{document}